\documentclass{article}
\usepackage{style}
\usepackage{empheq}
\usepackage{natbib}
\usepackage{array}
\usepackage{subfig}
\usepackage{tikz}
\title{On the non-convexity issue in the radial Calder\'on problem}
\author{Giovanni S. Alberti\thanks{Machine Learning Genoa Center (MaLGa), Department of Mathematics, Department of Excellence 2023–2027, University of Genoa (\url{giovanni.alberti@unige.it}).} \and Romain Petit\thanks{CNRS and DMA, ENS, PSL Universit\'e (\url{romain.petit@ens.fr}).} \and Clarice Poon\thanks{Mathematics Institute, University of Warwick (\url{clarice.poon@warwick.ac.uk}
).} \and Irène Waldspurger\thanks{CNRS, Université Paris Dauphine, INRIA Mokaplan (\url{waldspurger@ceremade.dauphine.fr}).}}

\date{November 24, 2025}

\begin{document}
\maketitle
    
\begin{abstract}
A classical approach to the Calder\'on problem is to estimate the unknown conductivity by solving a nonlinear least-squares problem. It leads to a nonconvex optimization problem which is generally believed to be riddled with bad local minimums. We revisit this issue in the case of piecewise constant radial conductivities and prove that, contrary to previous claims, there are no spurious critical points in the case of two scalar unknowns with no measurement noise. We also provide a partial proof of this result in the general setting which holds under a numerically verifiable assumption. Finally, we investigate whether a recently proposed approach based on convexification yields better reconstructions. For the first time, we propose a way to implement it in practice and show that it is consistently outperformed by some least squares solvers, which are also faster and require less measurements.
\end{abstract}

\section{Introduction}
\subsection{Electrical impedance tomography and the Calder\'on problem}
\label{subsec_eit_calderon}
Electrical impedance tomography is a non-invasive imaging technique that consists in reconstructing the electrical conductivity of a medium from current-voltage boundary measurements. It was formalized in \cite{calderon_inverse_1980} as an inverse problem for a partial differential equation (PDE), and is hence often called the \emph{Calder\'on problem}. More precisely, the unknown of this inverse problem is modeled as a positive bounded function $\gamma:\Omega\to\RR$ in a domain $\Omega$. The application of an electrical current $g$ on the boundary of the domain induces an electric potential $u$ in $\Omega$ which is the unique solution to the following conductivity equation:
\begin{equation*}
    \begin{cases}
        \mathrm{div}(\gamma\nabla u)=0 &\mathrm{in}~\Omega,\\
        \gamma \partial_{\nu}u=g &\mathrm{on}~\partial\Omega.
    \end{cases}
\end{equation*}
The Calder\'on problem consists in recovering $\gamma$ from the current-to-voltage map (also called Neumann-to-Dirichlet map) $\Lambda(\gamma):g\mapsto \restriction{u}{\partial\Omega}$.

Although the map $\Lambda$ can be shown to be injective (given sufficiently many measurements, see the references in the section below), it is highly ill-posed. As only boundary measurements are available, large variations of the unknown $\gamma$ away from the boundary might result in very small variations of the measurements $\Lambda(\gamma)$.  Coupled with the issue that the forward map $\Lambda$ is nonlinear, the numerical resolution of the inverse problem is difficult, as reconstruction algorithms are potentially prone to the presence of local minimums and they might have to be carefully initialized. It is often mentioned that, due to the nonlinearity, the classical least squares approach leads to a nonconvex optimization problem which is riddled with  bad local minimums. Some recent references where this claim was reiterated include \cite{lazzaro2024oracle,brojatsch2024required,klibanov2025convexification,harrach2025monotonicity}. However, there is a lack of precise characterization in the literature on the nature of this non-convexity. 

In this work, we revisit the non-convexity issue in the case of piecewise constant radial conductivities. We find that some least squares solvers always converge to a global minimizer. We provide a proof of the absence of spurious critical points in the case of two scalar unknowns. We also prove this result in the general case under a numerically verifiable assumption. In the absence of noise, our numerical simulations suggest that local minimums are not present and that the nonconvexity issue for the Calder\'on problem is far more nuanced than the literature suggests. Instead, the real challenge seems to be the ill-posedness of the inverse problem. Motivated by these findings, we also investigate whether the recently proposed convexification approach of \cite{harrachCalderonProblemFinitely2023} can still lead to improved reconstructions. We propose a way to implement it in practice and conduct an extensive numerical comparison with the least squares approach, showing that the former is consistently outperformed by the latter.

\subsection{Previous works}
\label{sec_previous}
\paragraph{Theoretical study of the Calder\'on problem.} The landmark results of \cite{sylvesterGlobalUniquenessTheorem1987,nachmanGlobalUniquenessTwoDimensional1996} (see also the later works \cite{astala2006calderon,bukhgeimRecoveringPotentialCauchy2008,caroGlobalUniquenessCalderon2016}) show that, under suitable assumptions, the unknown conductivity $\gamma$ is entirely determined by the Neumann-to-Dirichlet map $\Lambda(\gamma)$. We stress that this amounts to assume that one has access to infinitely many measurements. More recently, identifiability from finitely many measurements has been investigated in \cite{albertiCalderonsInverseProblem2019,albertiInfiniteDimensionalInverseProblems2022}. Regarding robustness to measurement noise, the ill-posedness of the problem only allows for weak stability results \citep{alessandriniStableDeterminationConductivity1988,mandacheExponentialInstabilityInverse2001}, unless some strong a priori information on the unknown is available \citep{alessandriniLipschitzStabilityInverse2005,bacchelliLipschitzStabilityStationary2006,berettaLipschitzStabilityInverse2013,berettaGlobalLipschitzStability2022}.

\paragraph{Reconstruction methods.} The most common approach to solve the Calder\'on problem is to rely on Landweber iteration, which amounts to performing gradient descent on a nonlinear least squares objective. As this objective is non-convex, convergence to a global minimizer is not guaranteed. Many works have been devoted to the introduction of conditions under which local convergence can be proved (see for example \cite{hankeConvergenceAnalysisLandweber1995,neubauerLandweberIterationNonlinear2000,kaltenbacher-neubauer-scherzer-2008}). Whether these conditions are satisfied in the context of the Calder\'on problem is, to our knowledge, an open question (see \cite{Lechleiter-Rieder-2008,kindermann-2022,kaltenbacher-2024} for some partial results). In this setting, global convergence seems very difficult to prove. Another class of reconstruction techniques are direct methods \cite{nachmanGlobalUniquenessTwoDimensional1996,siltanenImplementationReconstructionAlgorithm2000}, which however are not considered in this work.

\paragraph{Convex programming approaches.} In \cite{harrachCalderonProblemFinitely2023}, it is proved that, when the number of measurements is sufficiently large, the sought-after conductivity is the unique solution of a convex program which depends on the unknown only through the measurements. This property is particularly interesting, since convex optimization problems can be solved with globally convergent algorithms (that is to say, whose convergence is guaranteed for every choice of initial point). To our knowledge, this is the first convexification result that is valid in the finite measurements and noisy setting. However, the proof of this result is not constructive, and no procedure to find the minimal number of measurements is proposed. In addition, the property only holds under strong conditions on the parameters defining the convex program. How to find admissible values for these parameters in practice is not discussed. As a result, this reconstruction method has not been implemented and numerically compared with existing approaches. Let us also mention that other convexification methods based on Carleman estimates have been proposed in \cite{klibanov2017globally,klibanovConvexificationElectricalImpedance2019} in the case of infinitely many measurements and diminishing noise. Finally, a convex approach based on lifting is discussed in \cite{albertiConvexLiftingApproach2025}.

\paragraph{Radial conductivities.} Numerous works were dedicated to the special setting of radial conductivities (see for instance \cite{sylvesterConvergentLayerStripping1992,knudsenDBarMethodElectrical2007,barceloBornApproximationThreedimensional2024,daudeStableFactorizationCalderon2024}). In \cite{siltanenImplementationReconstructionAlgorithm2000}, the explicit expression of the forward map for radial \emph{and} piecewise constant conductivities is computed. Assuming infinitely measurements are available, \cite{garde2020,garde-2022} develops a reconstruction algorithm based on a series of one-dimensional optimization problems for a particular class of piecewise constant conductivities, which covers the radial setting. To our knowledge, the only work studying the optimization landscape of least squares in this setting is \citet[Section 2]{harrachCalderonProblemFinitely2023}.

\subsection{Contributions}
We focus on the simple setting of piecewise constant radial conductivities, which allows us to obtain a finite-dimensional inverse problem. We prove two new properties in the case of conductivities that are defined by two scalar unknowns. We show that they are uniquely determined by two scalar measurements, and that the corresponding least squares objective in the noiseless case has no spurious critical points. We also prove that these results can be extended to the higher dimensional case under a numerically verifiable assumption. We visualize these facts in the simpler setting with a one-dimensional unknown and one scalar measurement in Figure~\ref{fig_1d}: despite the non-convexity, the squared loss has a single critical point, corresponding to the true unknown, and there are no local minima.

\begin{figure}
    \centering
    \subfloat[]{\label{fig_obj_1d}\includegraphics[width=3in]{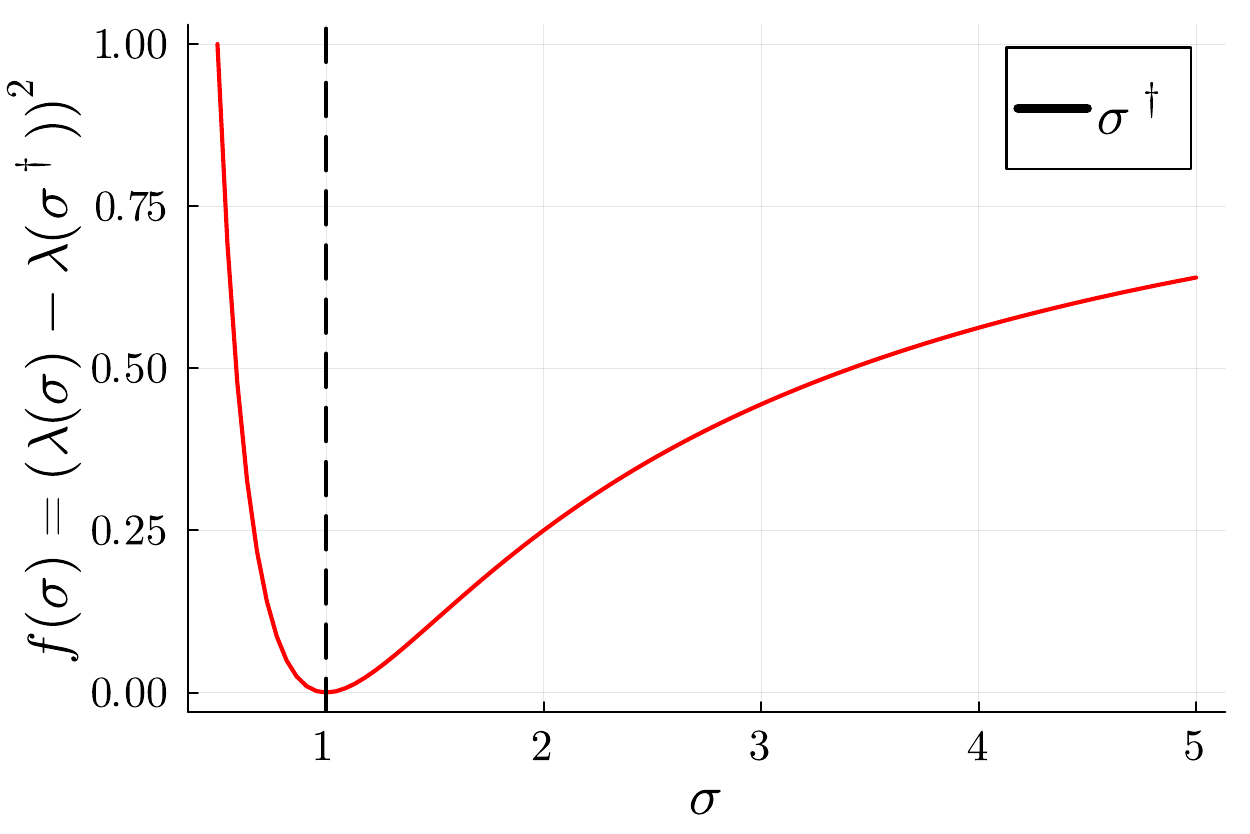}}~
    \subfloat[]{\label{fig_d2_obj_1d}\includegraphics[width=3in]{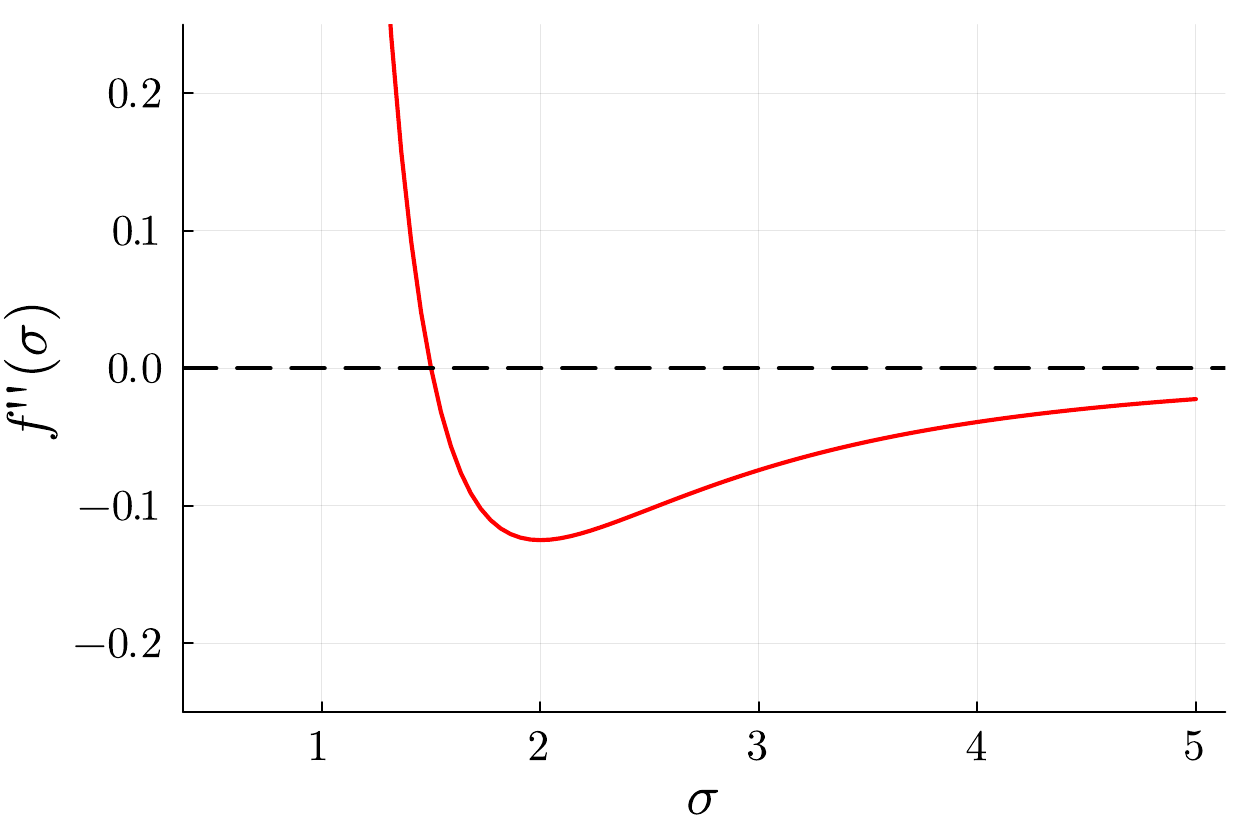}}
    \caption{plot of the least squares objective (a) and its second derivative (b) in a one-dimensional setting. The objective is not convex (its second derivative is not nonnegative), but it has a single critical point, which is its global minimum.}
    \label{fig_1d}
\end{figure}

Concerning the convex reformulation of the Cader\'on problem introduced in \cite{harrachCalderonProblemFinitely2023}, we propose a numerical procedure to evaluate this formulation. We first introduce a method to estimate the parameters of the convex program by exploiting its optimality conditions.  We then show that the convex program can be solved efficiently with interior point algorithms. We extensively compare this approach with the classical least squares approach. We provide an open source Julia package called \texttt{RadialCalderon.jl}\footnote{\href{https://github.com/rpetit/RadialCalderon.jl}{https://github.com/rpetit/RadialCalderon.jl} (see also \href{https://rpetit.github.io/RadialCalderon.jl}{https://rpetit.github.io/RadialCalderon.jl} for tutorials and the full documentation).} which allows one to reproduce all our experiments (see \cite{bezansonJuliaFreshApproach2017} for more background on the Julia language). This package can be used to benchmark different reconstruction methods and study numerically this radial piecewise constant version of the Calder\'on problem.

Our results suggest that the convex programming approach only allows for an accurate estimation of the unknown for problems with a very small size. It is consistently outperformed by the least squares approach, which is faster and more accurate while requiring less measurements. We argue that circumventing the nonconvexity of the problem without addressing its ill-posedness cannot yield an accurate estimation procedure.

\paragraph{Plan of the paper.} In \Cref{sec_pc_radial}, we introduce the setting of piecewise constant radial conductivities. In \Cref{sec_theory}, we analyze the case of conductivities that are defined by two scalar unknowns, and discuss the extension of our results to the higher dimensional setting. In \Cref{sec_convex}, we introduce the resolution method proposed in \cite{harrachCalderonProblemFinitely2023} and propose a method for estimating all the parameters of the related convex program. In \Cref{sec_num_comparison}, we perform an extensive numerical comparison of this method with the least squares approach. Finally, some concluding remarks are discussed in Section~\ref{sec:conclusion}.

\section{Piecewise constant radial conductivities}
\label{sec_pc_radial}
In this section, we introduce the setting of radial piecewise constant conductivities, which has all the important features of the full Calder\'on problem. Its main interest is to allow the evaluation of the forward map without having to rely on finite element methods to solve the underlying PDE. This allows us to decouple the study of the inverse problem and its ill-posedness from the effect of numerical errors in the evaluation of the forward map.

\subsection{Setting}\label{sub:setting}
We work on the two-dimensional open unit ball $\Omega=B(0,1)$ and assume that the unknown conductivity is piecewise constant on a known partition of $\Omega$ made of concentric annuli. More precisely, we use the polar coordinates $(r,\theta)$ and assume that $\gamma(r,\theta)$ is equal to a positive constant $\sigma_i>0$ for every $r_{i}<r<r_{i-1}$ with $0=r_n<r_{n-1}<...<r_0=1$ (see \Cref{fig_radial} for a schematic description). For every vector $\sigma\in(\RR_{>0})^n$, we denote by $\gamma_{\sigma}$ the associated function, where $\RR_{>0}$ denotes the set of positive real numbers.

\begin{figure}
    \centering
    \tikzset{every picture/.style={line width=0.75pt}} 

\begin{tikzpicture}[x=0.75pt,y=0.75pt,yscale=-1,xscale=1]
\useasboundingbox (140,20) rectangle (440,260);

\draw  [fill={rgb, 255:red, 255; green, 0; blue, 0 }  ,fill opacity=1 ] (180.5,140) .. controls (180.5,79.52) and (229.52,30.5) .. (290,30.5) .. controls (350.48,30.5) and (399.5,79.52) .. (399.5,140) .. controls (399.5,200.48) and (350.48,249.5) .. (290,249.5) .. controls (229.52,249.5) and (180.5,200.48) .. (180.5,140) -- cycle ;
\draw  [fill={rgb, 255:red, 255; green, 114; blue, 0 }  ,fill opacity=1 ] (208.5,139.25) .. controls (208.5,94.65) and (244.65,58.5) .. (289.25,58.5) .. controls (333.85,58.5) and (370,94.65) .. (370,139.25) .. controls (370,183.85) and (333.85,220) .. (289.25,220) .. controls (244.65,220) and (208.5,183.85) .. (208.5,139.25) -- cycle ;
\draw  [fill={rgb, 255:red, 255; green, 176; blue, 0 }  ,fill opacity=1 ] (240.14,140) .. controls (240.14,112.46) and (262.46,90.14) .. (290,90.14) .. controls (317.54,90.14) and (339.86,112.46) .. (339.86,140) .. controls (339.86,167.54) and (317.54,189.86) .. (290,189.86) .. controls (262.46,189.86) and (240.14,167.54) .. (240.14,140) -- cycle ;
\draw  [fill={rgb, 255:red, 255; green, 247; blue, 0 }  ,fill opacity=1 ] (270.16,140) .. controls (270.16,129.04) and (279.04,120.16) .. (290,120.16) .. controls (300.96,120.16) and (309.84,129.04) .. (309.84,140) .. controls (309.84,150.96) and (300.96,159.84) .. (290,159.84) .. controls (279.04,159.84) and (270.16,150.96) .. (270.16,140) -- cycle ;
\draw    (300.5,156.5) -- (307.64,164) ;
\draw    (330.5,169) -- (337.64,175) ;
\draw    (360,179.5) -- (366.14,185) ;
\draw    (387,191) -- (394.14,197) ;
\draw    (295.5,136) -- (389.14,78.5) ;

\draw (270,41) node [anchor=north west][inner sep=0.75pt]    {$\sigma =\sigma _{1}$};
\draw (270,72) node [anchor=north west][inner sep=0.75pt]    {$\sigma =\sigma _{2}$};
\draw (270,104) node [anchor=north west][inner sep=0.75pt]    {$\sigma =\sigma _{3}$};
\draw (392,70) node [anchor=north west][inner sep=0.75pt]    {$\sigma =\sigma _{4}$};
\draw (395.5,192) node [anchor=north west][inner sep=0.75pt]    {$r_{0} =1$};
\draw (367.5,185) node [anchor=north west][inner sep=0.75pt]    {$r_{1}$};
\draw (339,175) node [anchor=north west][inner sep=0.75pt]    {$r_{2}$};
\draw (307.84,165) node [anchor=north west][inner sep=0.75pt]    {$r_{3}$};

\end{tikzpicture}
    \caption{schematic description of the piecewise constant radial setting when $n=4$. The conductivity $\gamma:B(0,1)\to \RR$ is such that, using the polar coordinates $(r,\theta)$, it holds $\gamma(r,\theta)=\sigma_i$ for every $r_{i}<r<r_{i-1}$ for some $\sigma=(\sigma_i)_{1\leq i\leq n}\in (\RR_{>0})^n$.}
    \label{fig_radial}
\end{figure}
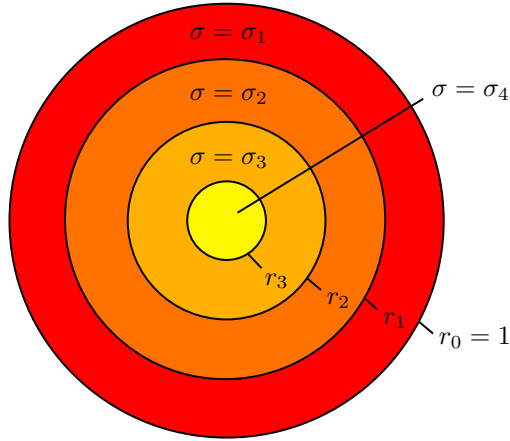

As noticed in \citet[Lemma 4.1]{siltanenImplementationReconstructionAlgorithm2000}, taking the Neumann data to be cosine functions allows us to obtain a diagonal Neumann-to-Dirichlet map. To be more precise, we fix a number of scalar measurements $m$, and,  given $j\in\{1,...,m\}$, we define $g_j:\partial \Omega\to\RR$ by $g_j(\theta)=\cos(j\theta)$. We notice as in \citet[Lemma 4.1]{siltanenImplementationReconstructionAlgorithm2000} that if $u_j(r,\theta)=(\alpha_{i,j}r^j+\beta_{i,j}r^{-j})\mathrm{cos}(j\theta)$ for every $r_{i}<r<r_{i-1}$ then $u_j$ is a solution to the Neumann problem for the conductivity equation
\begin{equation*}
    \begin{cases}
        \mathrm{div}(\gamma_\sigma\nabla u_j)=0 &\mathrm{in}~\Omega,\\
        \gamma_\sigma \partial_{\nu}u_j=g_j &\mathrm{on}~\partial\Omega,
    \end{cases}
\end{equation*}
provided $\beta_{n,j}=0$, $\sigma_1j(\alpha_{1,j}-\beta_{1,j})=1$ and
\begin{equation}
    \left\{
    \begin{aligned}
        \alpha_{i,j}r_i^j+\beta_{i,j}r_i^{-j}&=\alpha_{i+1,j}r_i^j+\beta_{i+1,j}r_i^{-j},\\
        \sigma_i(\alpha_{i,j}r_i^j-\beta_{i,j}r_i^{-j})&=\sigma_{i+1}(\alpha_{i+1,j}r_i^j-\beta_{i+1,j}r_i^{-j}),
    \end{aligned}
    \right.
    \label{linear_system_pde_coeff}
\end{equation}
for every $1\leq i\leq n-1$ and $1\leq j\leq m$. It is shown in \citet[Lemma 4.1]{siltanenImplementationReconstructionAlgorithm2000} that the solution of \eqref{linear_system_pde_coeff} satisfies $\beta_{i,j}=C_{i,j}\alpha_{i,j}$ with
\begin{equation}
    \begin{aligned}
        C_{n,j}=0~~\mathrm{and}~~C_{i,j}=\frac{\rho_i r_i^{2j}+C_{i+1,j}}{1+\rho_i C_{i+1,j}r_i^{-2j}},
    \end{aligned}
    \label{rec_rel_C}
\end{equation}
where $\rho_i\eqdef(\sigma_{i}-\sigma_{i+1})/(\sigma_{i}+\sigma_{i+1})$.

Since $\restriction{u_j}{\partial\Omega}=(\alpha_{1,j}+\beta_{1,j})\cos(j\theta)$, we obtain that the Neumann-to-Dirichlet map introduced in \Cref{subsec_eit_calderon} satisfies $\Lambda(\gamma_{\sigma})g_j=\lambda_j(\sigma)g_j$ with 
\begin{equation}\label{eq_lambda_j}
    \lambda_j(\sigma)\eqdef \frac{1+C_{1,j}}{j\sigma_1(1-C_{1,j})}.
\end{equation}
\paragraph{Finite-dimensional forward map.} Abusing notation, we also denote by $\Lambda$ the following truncated Neumann-to-Dirichlet map:
\begin{equation*}
    \begin{aligned}
        \Lambda \colon (\RR_{>0})^n &\to \RR^m\\
        \sigma &\mapsto [\lambda_j(\sigma)]_{j=1}^m.
    \end{aligned}
\end{equation*}
In the following, we focus on the recovery of $\sigma$ from the knowledge of $\Lambda(\sigma)$, which is an inverse problem with a finite-dimensional unknown and finitely many measurements.

In practice, the forward map $\Lambda$ can be evaluated by computing $C_{1,j}$ using \eqref{rec_rel_C} and plugging the result in the expression of $\lambda_j(\sigma)$. The Jacobian and the Hessian of $\Lambda$ can also be computed efficiently via automatic differentiation. To compute them, we use the package \texttt{ForwarDiff.jl} \citep{revelsForwardModeAutomaticDifferentiation2016} via \texttt{DifferentiationInterface.jl} \citep{dalleDifferentiationInterface2025,hill2025sparserbetterfasterstronger,schäfer2022abstractdifferentiationjlbackendagnosticdifferentiableprogramming}.

The following proposition gives some properties of the forward map $\Lambda$, on which we crucially rely.
\begin{proposition}
    The mapping $\Lambda$ is convex. In addition, it is strictly decreasing, meaning that for every $\sigma,\sigma'\in (\RR_{>0})^n$ with $\sigma\leq \sigma'$ (pointwise) and $\sigma\neq \sigma'$, it holds $\lambda_j(\sigma)>\lambda_j(\sigma')$ for every $1\leq j\leq m$.
    \label{prop_forward_map}
\end{proposition}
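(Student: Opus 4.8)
The plan is to avoid manipulating the recurrence \eqref{rec_rel_C} directly and instead exploit a variational characterization of the diagonal entries $\lambda_j(\sigma)$. Since $\Lambda(\gamma_\sigma)g_j=\lambda_j(\sigma)g_j$ and $\|g_j\|_{L^2(\partial\Omega)}^2=\pi$, we have $\pi\,\lambda_j(\sigma)=\langle\Lambda(\gamma_\sigma)g_j,g_j\rangle_{L^2(\partial\Omega)}$, and the first step is to record the Dirichlet principle for the Neumann problem,
\[
\pi\,\lambda_j(\sigma)=\max_{v\in H^1(\Omega)}\Big(2\int_{\partial\Omega}g_j\,v\,ds-\int_\Omega\gamma_\sigma|\nabla v|^2\,dx\Big),
\]
the maximum being attained exactly at the Neumann solution $u_j$. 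Indeed the functional is concave in $v$, its Euler--Lagrange equation is the weak form of $\mathrm{div}(\gamma_\sigma\nabla v)=0$ with boundary flux $g_j$, and evaluating at $u_j$ and using $\int_\Omega\gamma_\sigma|\nabla u_j|^2=\int_{\partial\Omega}g_j u_j=\langle\Lambda(\gamma_\sigma)g_j,g_j\rangle$ returns $\langle\Lambda(\gamma_\sigma)g_j,g_j\rangle$. The compatibility condition $\int_{\partial\Omega}g_j\,ds=0$ holds since $g_j(\theta)=\cos(j\theta)$, so the functional is invariant under adding constants to $v$ and the maximum over $H^1(\Omega)/\RR$ is well defined and attained.

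Convexity then follows immediately. The key observation is that $\gamma_\sigma=\sum_{i=1}^n\sigma_i\mathbf 1_{A_i}$, where $A_i=\{r_i<|x|<r_{i-1}\}$, is linear in $\sigma$, so that for each fixed $v$ the quantity $2\int_{\partial\Omega}g_j v\,ds-\sum_i\sigma_i\int_{A_i}|\nabla v|^2\,dx$ is an affine function of $\sigma$. A pointwise supremum of affine functions is convex; hence each $\lambda_j$ is convex, and therefore $\Lambda$ is convex (componentwise).

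For strict monotonicity I would use the same variational principle as a one-sided bound. Let $\sigma\le\sigma'$ with $\sigma\neq\sigma'$, and let $u_j'$ denote the Neumann solution for $\gamma_{\sigma'}$, i.e. the maximizer at $\sigma'$. Using $u_j'$ as a (suboptimal) competitor at $\sigma$ gives
\[
\pi\,\lambda_j(\sigma)\ge 2\int_{\partial\Omega}g_j u_j'\,ds-\int_\Omega\gamma_\sigma|\nabla u_j'|^2\,dx=\pi\,\lambda_j(\sigma')+\sum_{i=1}^n(\sigma_i'-\sigma_i)\int_{A_i}|\nabla u_j'|^2\,dx,
\]
where every term in the last sum is nonnegative. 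To upgrade this to a strict inequality I would show that $\nabla u_j'$ does not vanish identically on any annulus: writing $u_j'=v(r)\cos(j\theta)$, the radial profile $v$ solves $v''+r^{-1}v'-j^2 r^{-2}v=0$ on each annulus with $v$ and $\gamma_{\sigma'}v'$ continuous across the interfaces (this is exactly \eqref{linear_system_pde_coeff}); if $v$ vanished on one annulus then, by uniqueness for this ODE together with the transmission conditions, it would vanish on all of them, hence $u_j'\equiv0$, contradicting $g_j\neq0$. Thus $\int_{A_i}|\nabla u_j'|^2\,dx>0$ for every $i$, and choosing an index $i_0$ with $\sigma_{i_0}'>\sigma_{i_0}$ makes the corresponding term strictly positive, so $\lambda_j(\sigma)>\lambda_j(\sigma')$.

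The main obstacle is the first step: pinning down the correct function space and justifying that the maximum in the energy identity is attained at the Neumann solution (well-posedness of the Neumann problem, invariance under additive constants, and attainment). Once this is in place, both convexity and strict monotonicity are short consequences; the only additional care needed is the non-vanishing of $\nabla u_j'$ on each annulus, which is where the explicit radial structure \eqref{linear_system_pde_coeff} enters. An alternative, more computational route would establish both properties inductively from the recurrence \eqref{rec_rel_C} for $C_{i,j}$, but I expect the variational argument to be cleaner and to make the direction of the monotonicity transparent.
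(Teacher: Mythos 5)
Your proposal is correct, but it takes a genuinely different route from the paper. The paper simply cites \cite{harrachCalderonProblemFinitely2023} for the convexity and for the derivative formula $\partial_i\lambda_j(\sigma)=-\frac{1}{\pi}\int_{A_i}|\nabla u_j|^2$, and then obtains strict monotonicity by showing this integral is positive via the \emph{strong unique continuation principle} for the uniformly elliptic operator $\mathrm{div}(\gamma_\sigma\nabla\cdot)$: if $\nabla u_j$ vanished on an open set, $u_j$ would be constant there and hence everywhere, contradicting $g_j\neq 0$. You instead give a self-contained argument: the Dirichlet principle $\pi\lambda_j(\sigma)=\max_v\bigl(2\int_{\partial\Omega}g_jv-\int_\Omega\gamma_\sigma|\nabla v|^2\bigr)$ exhibits $\lambda_j$ as a supremum of functions affine in $\sigma$ (giving convexity without citing anything), and the competitor argument with $u_j'$ gives monotonicity without ever differentiating $\Lambda$; the non-degeneracy is then handled by the explicit radial ODE and the transmission conditions \eqref{linear_system_pde_coeff} rather than by unique continuation. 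Both are sound. The paper's route is shorter given the external reference and its unique-continuation step works for arbitrary (non-radial) conductivities; yours is elementary, makes the direction of monotonicity transparent, and in effect reproves the cited lemma, at the cost of having to set up the variational principle carefully (which you do correctly: the functional is concave, invariant under additive constants since $\int_{\partial\Omega}g_j=0$, and equals $\langle\Lambda(\gamma_\sigma)g_j,g_j\rangle=\pi\lambda_j(\sigma)$ at the Neumann solution). One small point to make explicit in your last step: $\nabla u_j'\equiv 0$ on an annulus means $u_j'$ is \emph{constant} there, and since $u_j'=v(r)\cos(j\theta)$ with $j\geq 1$ this forces $v\equiv 0$ on that annulus; only then does your ODE/transmission propagation argument apply.
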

\begin{proof}
    The convexity of $\Lambda$ is proved in \citet[Lemma 4.1]{harrachCalderonProblemFinitely2023}. From the same reference, we have that the partial derivative of $\lambda_j$ with respect to its $i$-th variable (that is $\sigma_i$) satisfies
    \begin{equation}\label{eq:derivative_explicit}
        \partial_{i}\lambda_j(\sigma)=-\frac{1}{\pi}\int_{A_i}\|\nabla u_j(\tau)\|_2^2 d\tau,
    \end{equation}
    where $A_i$ denotes the annulus $r_i<r<r_{i-1}$. We claim that this quantity is negative for every $1\leq i\leq n$ and $1\leq j \leq m$. To see this, we notice that, since $\gamma_{\sigma}$ is lower bounded by the positive constant $\mathrm{min}_{1\leq i\leq n}\,\sigma_i$, the differential operator $\mathrm{div}(\gamma_{\sigma}\nabla\cdot)$ is uniformly elliptic. As a result, the strong unique continuation principle (see for instance \cite{alessandriniStrongUniqueContinuation2012}) ensures that, if $u_j$ vanishes on a non empty open set, it is identically zero. Since the boundary data $g_j$ is not identically zero, we obtain that $u_j$ is not identically zero. If $u_j$ was equal to a constant on a non empty open set, the same argument could be applied to $u_j$ minus this constant, which still solves $\mathrm{div}(\gamma_{\sigma}\nabla u_j)=0$. As a result, we obtain that $\nabla u_j$ cannot vanish on a non empty open set, which yields the result.
\end{proof}

\subsection{Numerical study of the inverse problem}
\label{subsec_numerical_study_ip}
In this section, we present several experiments suggesting that this simple piecewise constant radial model exhibits all the important features of the classical Calder\'on problem. In particular, the estimation problem becomes increasingly ill-posed away from the boundary. In the following, we take $r_i=(n-i)/n$ ($i=0,...,n$).

Our first experiment shows that the estimation of the conductivity is much harder on annuli that are distant from the boundary. We take $n=m=10$ and draw $100$ random true conductivities uniformly in $[a,b]^n$ with $a=0.5$ and $b=1.5$. Then, we use a trust-region algorithm to solve the nonlinear equation $\Lambda(\sigma)=\Lambda(\sigma^\dagger)$ for each true conductivity $\sigma^\dagger$. For each instance, we require that the estimated conductivity $\hat{\sigma}$ satisfies $\|\Lambda(\hat{\sigma})-\Lambda(\sigma^\dagger)\|_{\infty}\leq 10^{-15}$, where $\|\cdot\|_{\infty}$ denotes the euclidean $\ell^{\infty}$ norm (the maximum of the absolute values of the coordinates). Then, we compute the mean of $|\hat{\sigma}_i-\sigma^\dagger_i|$ for each $i\in\{1,...,n\}$. The result is given in \Cref{fig_error_outer_inner}. We notice that the average error increases by several orders of magnitude as $i$ grows. This is consistent with the well-known fact that the reconstruction quality deteriorates as one moves away from the boundary, see e.g.\ \cite{winkler-rieder-2014,garde-knudsen-2017,alessandrini-scapin-2017,garde-nyvonen-2020}. Taking $m$ much larger than $n$ yields similar results. 

Our second experiment shows that the inverse problem becomes more and more ill-posed as $n$ grows, as expected given the severe ill-posedness of the infinite-dimensional inverse problem, see \cite{rondi-2006}. Indeed, given $\epsilon>0$, we notice that the quantity
\begin{equation*}
    \underset{\sigma,\sigma'\in[a,b]^n}{\mathrm{sup}}~\|\sigma-\sigma'\|_{\infty}~~\mathrm{s.t.}~~\|\Lambda(\sigma)-\Lambda(\sigma')\|_{\infty}\leq\epsilon
\end{equation*}
significantly increases as $n$ increases. To show this, we run the same experiment as above with $n=m$ ranging from $2$ to $10$. The results are displayed in \Cref{fig_mean_error_npixels}. Again, taking $m$ much larger than $n$ yields similar results.
\begin{figure}
    \centering
    \includegraphics[width=0.5\linewidth]{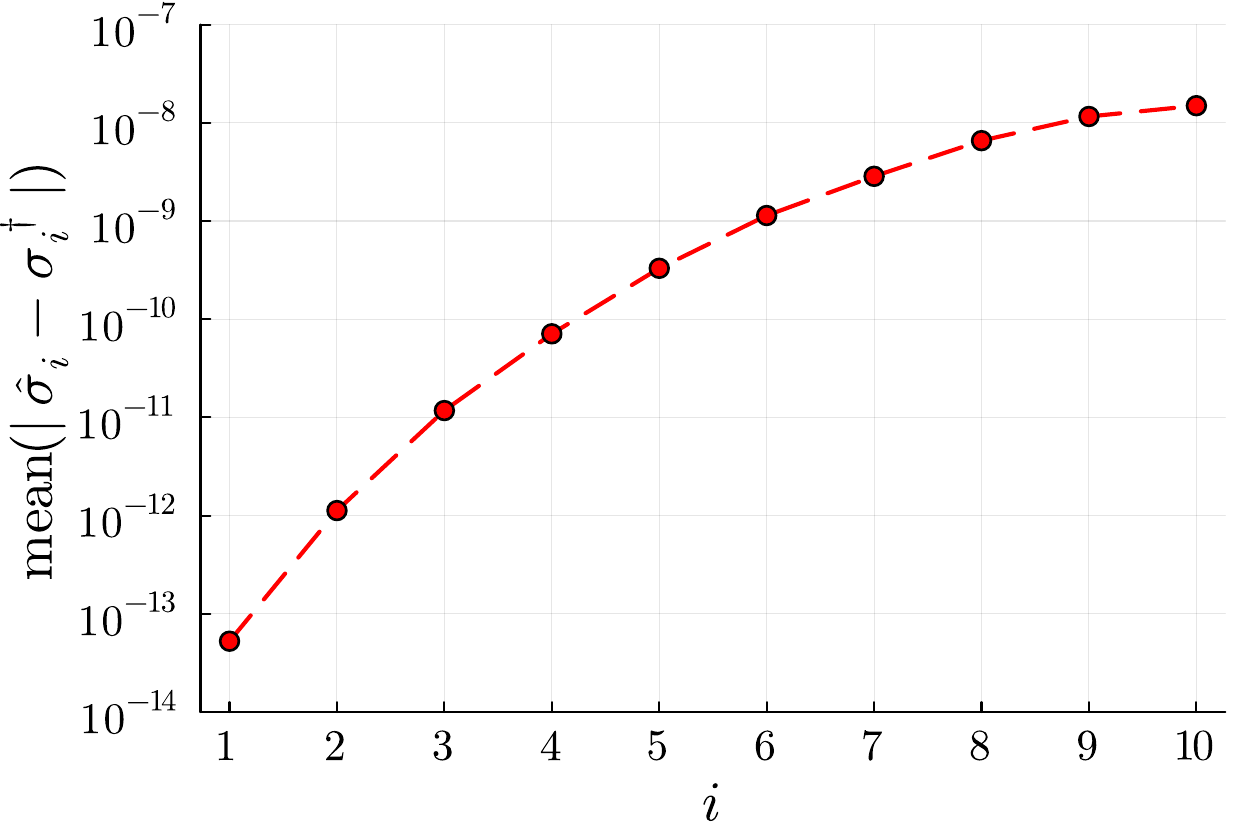}
    \caption{mean error on the $i$-th annulus among $100$ pairs $\sigma^\dagger,\hat{\sigma}\in[a,b]^n$ such that $\|\Lambda(\hat{\sigma})-\Lambda(\sigma^\dagger)\|_{\infty}\leq 10^{-15}$ with $(a,b)=(0.5, 1.5)$ and $n=m=10$. The error on the outermost annulus is several orders of magnitude smaller than the error on the innermost annulus.}
    \label{fig_error_outer_inner}
\end{figure}

\begin{figure}
    \centering
    \includegraphics[width=0.5\linewidth]{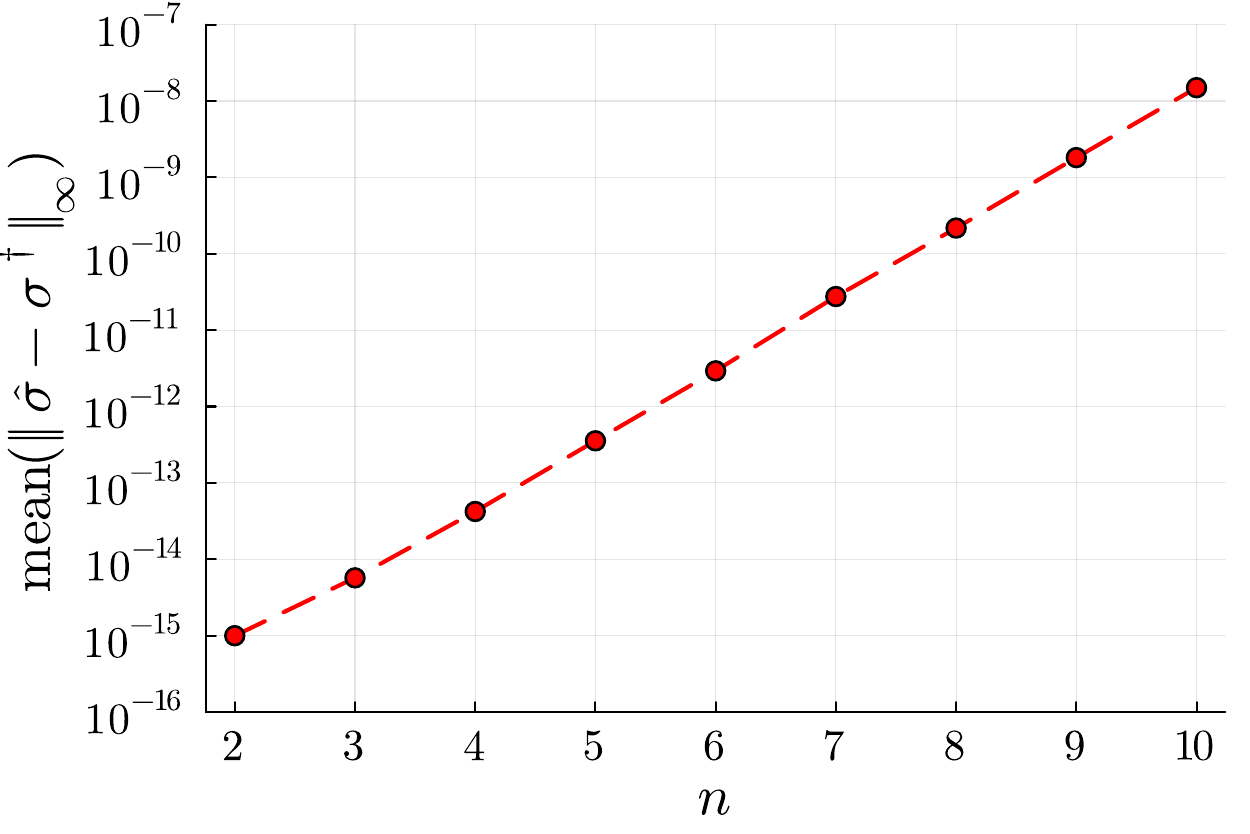}
    \caption{mean $\ell^\infty$ error as a function of the number of annuli $n$. The mean is computed on $100$ pairs $\sigma^\dagger,\hat{\sigma}\in[a,b]^n$ such that $\|\Lambda(\hat{\sigma})-\Lambda(\sigma^\dagger)\|_{\infty}\leq 10^{-15}$ with $(a,b)=(0.5,1.5)$ and $m=n$ measurements.}
    \label{fig_mean_error_npixels}
\end{figure}

Finally, we notice that $\Lambda'(\sigma)$ (the Jacobian matrix of $\Lambda$ at $\sigma$) seems to be always invertible when $m=n$, but that its smallest singular value goes to $0$ as $n$ increases (see \Cref{table_jac_det} in \Cref{sec_theory}). When $m>n$, there are always $n$ nonzero singular values.

\section{Identifiability and absence of bad critical points}
\label{sec_theory}
We prove in this section that, when $n=2$ and $m\geq 2$, the forward map $\Lambda$ is injective. We also show that, when $n=2$ and $m\geq n$, for every $\sigma^\dagger\in(\RR_{>0})^n$, the least squares objective $\sigma\mapsto\|\Lambda(\sigma)-\Lambda(\sigma^\dagger)\|_2^2$ has a unique critical point, which is $\sigma^\dagger$. Finally, we provide a discussion concerning the case where $n>2$. Our results suggest that, in this piecewise constant radial setting, the main difficulty for solving the inverse problem is not the nonconvexity of the least squares objective, but rather the ill-posedness of the problem.

Throughout, given $\Lambda: \sigma\in (\RR_{>0})^n \to (\lambda_i(\sigma))_{i=1}^m$, its Jacobian is denoted by $\Lambda'(\sigma)\in\RR^{m\times n}$ with its $(j,i)$-th entry in row $j$ and column $i$ being $\partial_i \lambda_j(\sigma)$.

\subsection{The two-dimensional case}
In the particular case of two scalar unknowns ($n=2$), \eqref{rec_rel_C} and \eqref{eq_lambda_j} yield \begin{equation}\label{eq_lambda_j_n_2}
    \lambda_j(\sigma)=\frac{1+\rho_1 r_1^{2j}}{j\sigma_1(1-\rho_1r_1^{2j})}
\end{equation}
for every $1\leq j\leq m$.

We first state the following useful lemma.
\begin{lemma}\label{lem:2d}
    Let $n=m=2$ and consider $\Lambda:\sigma\mapsto(\lambda_1(\sigma),\lambda_2(\sigma))$. Then, the determinant of the Jacobian is negative, that is to say $\det(\Lambda'(\sigma))<0$ for all $\sigma\in(\RR_{>0})^2$.
\end{lemma}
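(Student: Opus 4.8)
The plan is to reduce the statement to a fully explicit one-variable computation, which is possible because for $n=2$ the forward map has a closed form. First I would specialize the recursion \eqref{rec_rel_C}: since $C_{2,j}=0$, it gives $C_{1,j}=\rho_1 r_1^{2j}$ with $\rho_1=(\sigma_1-\sigma_2)/(\sigma_1+\sigma_2)$, so that
\begin{equation*}
    \lambda_j(\sigma)=\frac{1+\rho_1 r_1^{2j}}{j\,\sigma_1\,(1-\rho_1 r_1^{2j})},\qquad j=1,2.
\end{equation*}
Introducing $f(x)=(1+x)/(1-x)$ and $t=r_1^2\in(0,1)$, this reads $\lambda_1=\sigma_1^{-1}f(\rho_1 t)$ and $\lambda_2=(2\sigma_1)^{-1}f(\rho_1 t^2)$.

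The key simplification is to pass to the variables $(\sigma_1,s)$ with $s\eqdef\rho_1$. The map $(\sigma_1,\sigma_2)\mapsto(\sigma_1,s)$ is a diffeomorphism from $(\RR_{>0})^2$ onto $\RR_{>0}\times(-1,1)$, and a direct computation shows its Jacobian determinant is $-2\sigma_1/(\sigma_1+\sigma_2)^2<0$. By the chain rule, $\det(\Lambda'(\sigma))$ equals the determinant of the Jacobian of $(\sigma_1,s)\mapsto(\lambda_1,\lambda_2)$ multiplied by this negative factor, so it suffices to prove that the latter determinant is \emph{positive}.

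Since $\lambda_1,\lambda_2$ depend on $s$ only through $f(st)$ and $f(st^2)$, the $2\times 2$ determinant in the $(\sigma_1,s)$ variables factors cleanly; after pulling out the positive common factor $t/(2\sigma_1^3)$ it reduces to showing
\begin{equation*}
    f'(st)\,f(st^2)-t\,f(st)\,f'(st^2)>0.
\end{equation*}
Using $f'(x)=2/(1-x)^2$ and putting the left-hand side over the common denominator $(1-st)^2(1-st^2)^2$, the numerator simplifies, via $(1+st^2)(1-st^2)=1-s^2t^4$ and $(1+st)(1-st)=1-s^2t^2$, to $2(1-s^2t^4)-2t(1-s^2t^2)=2(1-t)(1+s^2t^3)$. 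Because $t\in(0,1)$ and $|s|<1$, both factors are strictly positive and the denominator is positive, which establishes the inequality and hence the lemma.

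The only genuine work lies in this last algebraic step: the choice of the variable $s=\rho_1$ is what makes the cross terms collapse into the factored form $(1-t)(1+s^2t^3)$, and I expect verifying this identity—while tracking signs and using the ranges $t\in(0,1)$, $|s|<1$ to keep all denominators $1-st$ and $1-st^2$ positive—to be the most error-prone part. No deeper obstacle is anticipated, since the forward map is entirely explicit when $n=2$.
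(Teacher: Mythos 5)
Your proof is correct, and I verified the key computations: $C_{1,j}=\rho_1 r_1^{2j}$, the Jacobian determinant $-2\sigma_1/(\sigma_1+\sigma_2)^2$ of the change of variables, the common factor $t/(2\sigma_1^3)$, and the factorization $2(1-s^2t^4)-2t(1-s^2t^2)=2(1-t)(1+s^2t^3)$ all check out (and the result agrees with the explicit value $-t(1-t)/2$ at $\sigma=\mathbf{1}$). Your route is genuinely different in organization from the paper's: the paper keeps the variables $(\sigma_1,\sigma_2)$, writes $\det\Lambda'(\sigma)=\partial_2\lambda_1\,\partial_2\lambda_2\,[h_1(\sigma)-h_2(\sigma)]$ with $h_j=\partial_1\lambda_j/\partial_2\lambda_j$, and concludes from an explicit formula for $h_j$ (obtained via a symbolic computation tool, not derived in the text) that $h_j$ is increasing in $j$; you instead pass to the coordinates $(\sigma_1,\rho_1)$, in which the forward map separates as $\lambda_j=(j\sigma_1)^{-1}f(\rho_1 t^j)$, and factor the determinant directly. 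What your approach buys is a fully self-contained, hand-checkable derivation with no black-box symbolic step, and it sidesteps a sign slip in the paper's proof (which asserts $\partial_i\lambda_j(\sigma)>0$ even though \eqref{eq:derivative_explicit} gives $\partial_i\lambda_j(\sigma)<0$; the paper's conclusion survives because only products and ratios of two partials enter, but your argument never touches the signs of individual partials at all). What the paper's approach buys is the monotonicity-in-$j$ statement for $h_j$, which is the structural fact reused conceptually in the higher-dimensional discussion; your factorization is more ad hoc to $n=2$ but arguably cleaner there.
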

\begin{proof}
As shown in the proof of Proposition~\ref{prop_forward_map}, we have $\partial_i\lambda_j(\sigma)<0$. Let 
$$
h_j(\sigma) \eqdef \frac{\partial_1 \lambda_j (\sigma)}{\partial_2 \lambda_j(\sigma)}.
$$
Then, one can check (we obtained this expression by relying on a symbolic computation tool) that for every $\sigma=(\sigma_1,\sigma_2)\in(\RR_{>0})^2$, it holds  
\begin{align}
    h_j(\sigma) = \frac{1}{4\sigma_1^2} \left(\frac{1}{r_1^{2j}} (\sigma_1+\sigma_2)^2 - r_1^{2j}(\sigma_1-\sigma_2)^2 - 4\sigma_1\sigma_2\right) ,
\end{align}
which is clearly increasing in $j$, since $r_1\in (0,1)$ implies that each of the terms is increasing in $j$.
It follows that $$\det(\Lambda'(\sigma)) = \partial_1\lambda_1(\sigma) \partial_2\lambda_2(\sigma) - \partial_1\lambda_2(\sigma) \partial_2\lambda_1(\sigma) =\partial_2\lambda_2(\sigma)\partial_2\lambda_1(\sigma)[h_1(\sigma)-h_2(\sigma)]<0,$$
where we have used also that $\partial_i\lambda_1(\sigma)<0$ ($i=1,2$).
\end{proof}

Now, we prove that the unknown conductivity can be recovered from two scalar measurements. To our knowledge, the best identifiability results that are currently available require a number of measurements which is exponential in the number of unknowns \citep{albertiInfiniteDimensionalInverseProblems2022}.
\begin{proposition}\label{prop:2d_inj}
    If $n=2$ and $m\geq 2$, the forward map $\Lambda$ is injective.
\end{proposition}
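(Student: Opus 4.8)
The plan is to first reduce to the case $m=2$ and then leverage the negativity of the Jacobian determinant from Lemma~\ref{lem:2d} together with the strict monotonicity of Proposition~\ref{prop_forward_map}. Since for any $m\geq 2$ the pair $(\lambda_1,\lambda_2)$ appears among the components of $\Lambda$, injectivity of the two-component map $F\eqdef(\lambda_1,\lambda_2)\colon(\RR_{>0})^2\to\RR^2$ immediately yields injectivity of $\Lambda$: if $\Lambda(\sigma)=\Lambda(\sigma')$ then in particular $F(\sigma)=F(\sigma')$, and injectivity of $F$ forces $\sigma=\sigma'$. I therefore take $n=m=2$ and aim to show that $F$ is injective.

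Suppose, for contradiction, that $F(\sigma^a)=F(\sigma^b)$ with $\sigma^a\neq\sigma^b$. Because $\lambda_1$ is strictly monotone in each argument (Proposition~\ref{prop_forward_map}), the points $\sigma^a$ and $\sigma^b$ cannot be comparable for the pointwise order, so they are strictly anti-ordered; say $\sigma_1^a<\sigma_1^b$ and $\sigma_2^a>\sigma_2^b$. Since $\lambda_1$ is strictly monotone in $\sigma_2$ with $\partial_2\lambda_1\neq 0$, the level set $\{\lambda_1=c_1\}$, where $c_1\eqdef\lambda_1(\sigma^a)$, is by the implicit function theorem the graph of a $C^1$ strictly monotone function $\sigma_2=\phi(\sigma_1)$, and both $\sigma^a$ and $\sigma^b$ lie on it. Along this graph I would compute, using $\phi'=-\partial_1\lambda_1/\partial_2\lambda_1$, that $\tfrac{d}{d\sigma_1}\lambda_2(\sigma_1,\phi(\sigma_1))=\partial_1\lambda_2+\partial_2\lambda_2\,\phi'=-\det(F')/\partial_2\lambda_1$. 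By Lemma~\ref{lem:2d} the determinant is negative and $\partial_2\lambda_1$ has a fixed sign, so this derivative never vanishes; hence $\lambda_2$ is strictly monotone as one moves along the $\lambda_1$-level set. This contradicts $\lambda_2(\sigma^a)=\lambda_2(\sigma^b)$ together with $\sigma_1^a\neq\sigma_1^b$, and the contradiction proves injectivity.

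The step I expect to require the most care is the passage from local to global: the determinant condition only gives local injectivity through the inverse function theorem, and to run the argument above I must know that $\sigma^a$ and $\sigma^b$ lie on a single connected, monotone arc of $\{\lambda_1=c_1\}$, i.e.\ that the domain of $\phi$ is an interval containing $[\sigma_1^a,\sigma_1^b]$. I would establish this by an intermediate value argument: for $\sigma_1\in(\sigma_1^a,\sigma_1^b)$, strict monotonicity of $\lambda_1$ gives $\lambda_1(\sigma_1,\sigma_2^b)>c_1>\lambda_1(\sigma_1,\sigma_2^a)$, so a solution $\phi(\sigma_1)\in(\sigma_2^b,\sigma_2^a)$ exists. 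As a fully rigorous alternative that bypasses connectivity altogether, I would exploit the explicit form of the forward map: for $n=2$ the recursion \eqref{rec_rel_C} collapses to $C_{1,j}=\rho_1 r_1^{2j}$ with $\rho_1=(\sigma_1-\sigma_2)/(\sigma_1+\sigma_2)$, so, writing $t\eqdef r_1^2$, one finds $\rho_1 t=(\sigma_1\lambda_1-1)/(\sigma_1\lambda_1+1)$ and $\rho_1 t^2=(2\sigma_1\lambda_2-1)/(2\sigma_1\lambda_2+1)$. Eliminating $\rho_1$ via the known ratio $t$ yields a single quadratic equation in $\sigma_1$ whose product of roots equals $-1/(2\lambda_1\lambda_2)<0$; it therefore has exactly one positive root. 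This pins down $\sigma_1$, and then $\rho_1$ and hence $\sigma_2$, giving injectivity directly.
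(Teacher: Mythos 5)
Your main argument is correct and is essentially the paper's own proof: reduce to $m=2$, use strict monotonicity to anti-order the two preimages, parametrize the $\lambda_1$-level set between them as a graph $\sigma_2=\phi(\sigma_1)$ via the intermediate value theorem plus the implicit function theorem, and observe that $\frac{d}{d\sigma_1}\lambda_2(\sigma_1,\phi(\sigma_1))=-\det(\Lambda'(\sigma))/\partial_2\lambda_1$ never vanishes by Lemma~\ref{lem:2d}, contradicting Rolle. The local-to-global point you flag is handled in the paper exactly as you propose. Your second, algebraic route is genuinely different from the paper and also works: for $n=2$ the recursion gives $C_{1,j}=\rho_1 r_1^{2j}$, eliminating $\rho_1$ yields the quadratic $2\lambda_1\lambda_2(t-1)\sigma_1^2+(t+1)(\lambda_1-2\lambda_2)\sigma_1+(1-t)=0$ with $t=r_1^2\in(0,1)$, whose roots have negative product $-1/(2\lambda_1\lambda_2)$, so $\sigma_1$ (hence $\rho_1$ and $\sigma_2$) is uniquely determined; this gives an explicit inversion formula but, unlike the level-set argument, does not generalize to the induction used for $n>2$ in Proposition~\ref{prop_absence_critical_point_d}.
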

\begin{proof}
   It is enough to prove the result in the case $m=2$. Let $\sigma^\dagger,\hat \sigma\in(\RR_{>0})^2$ be such that 
     $\sigma^\dagger \neq  \hat \sigma$ and $\Lambda(\sigma^\dagger) = \Lambda(\hat \sigma)$. By strict monotonicity {(Proposition~\ref{prop_forward_map})}, we can assume without loss of generality that $\sigma^\dagger_1<\hat \sigma_1$ and $\sigma^\dagger_2> \hat \sigma_2$.

     We first show that there exists a function $g:[\sigma^\dagger_1,\hat \sigma_1] \to [ \hat \sigma_2,\sigma^\dagger_2]$ such that $\lambda_1(\sigma_1,g(\sigma_1)) = \lambda_1(\sigma^\dagger)$ for every $\sigma_1\in[\sigma_1^\dagger,\hat{\sigma}_1]$. For visualization purposes, the graph of this function, as well as the graph of the function $h$ that we compute below, is given in \Cref{fig_g_h} in the case where $\sigma^\dagger=\mathbf{1}$. Define $f(\sigma) = \lambda_1(\sigma) - \lambda_1(\sigma^\dagger)$. Then, by monotonicity,
     \begin{itemize}
         \item $f$ is negative on $(\sigma^\dagger_1,\hat \sigma_1)\times \{\sigma_2^\dagger\}$,
         \item $f$ is positive on $(\sigma^\dagger_1,\hat \sigma_1)\times \{\hat \sigma_2\}$.
     \end{itemize}
     It follows by the intermediate value theorem that for all $\sigma_1 \in (\sigma^\dagger_1,\hat \sigma_1)$, there exists $\sigma_2 \in (\hat \sigma_2,\sigma^\dagger_2)$ such that $f(\sigma_1,\sigma_2) = 0$. This shows that $g$ is well defined. Moreover, since $\partial_2 \lambda_1 <0$, it follows by the implicit function theorem that $g$ is of class $C^1$ and $g'(\sigma_1) = - \partial_1 \lambda_1(\sigma, g(\sigma_1))/\partial_2 \lambda_1(\sigma_1, g(\sigma_1)).$

 \begin{figure}
    \centering
    \subfloat[]{\includegraphics[width=3in]{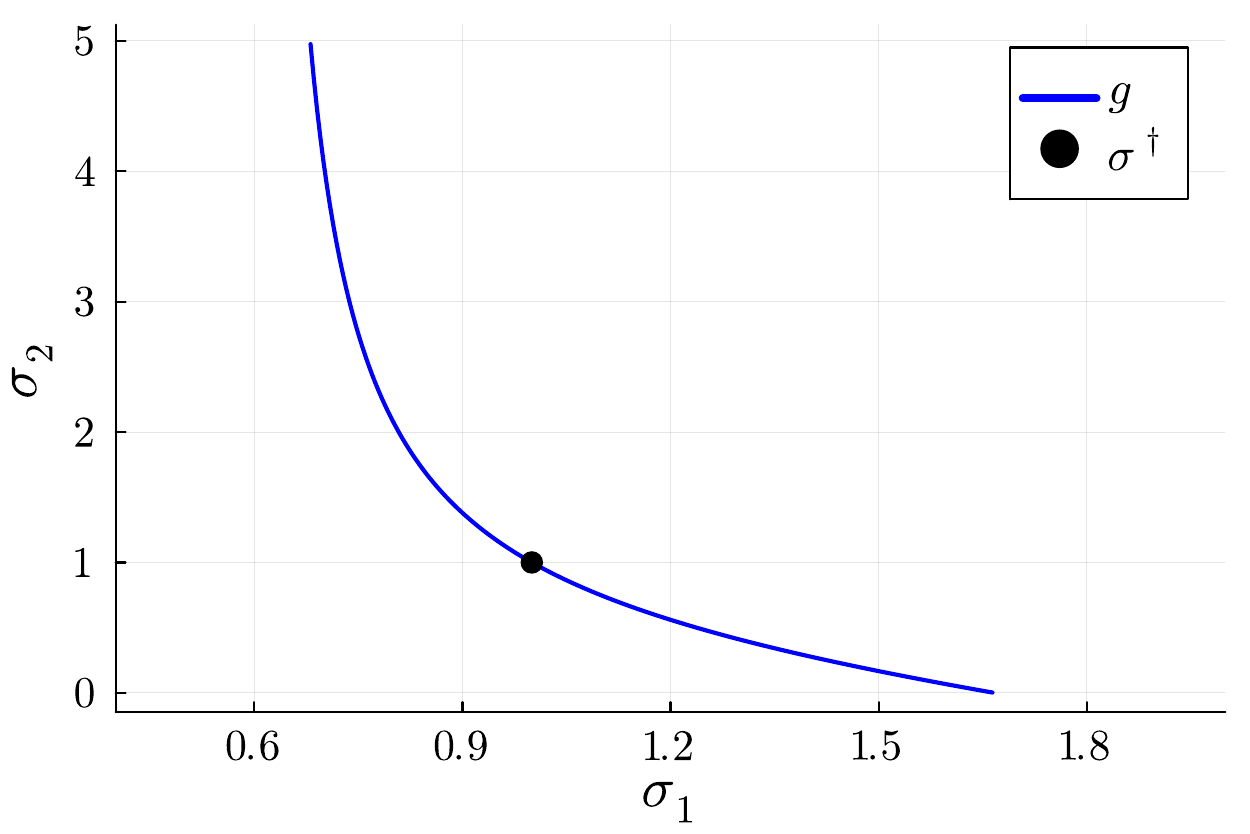}}~
    \subfloat[]{\includegraphics[width=3in]{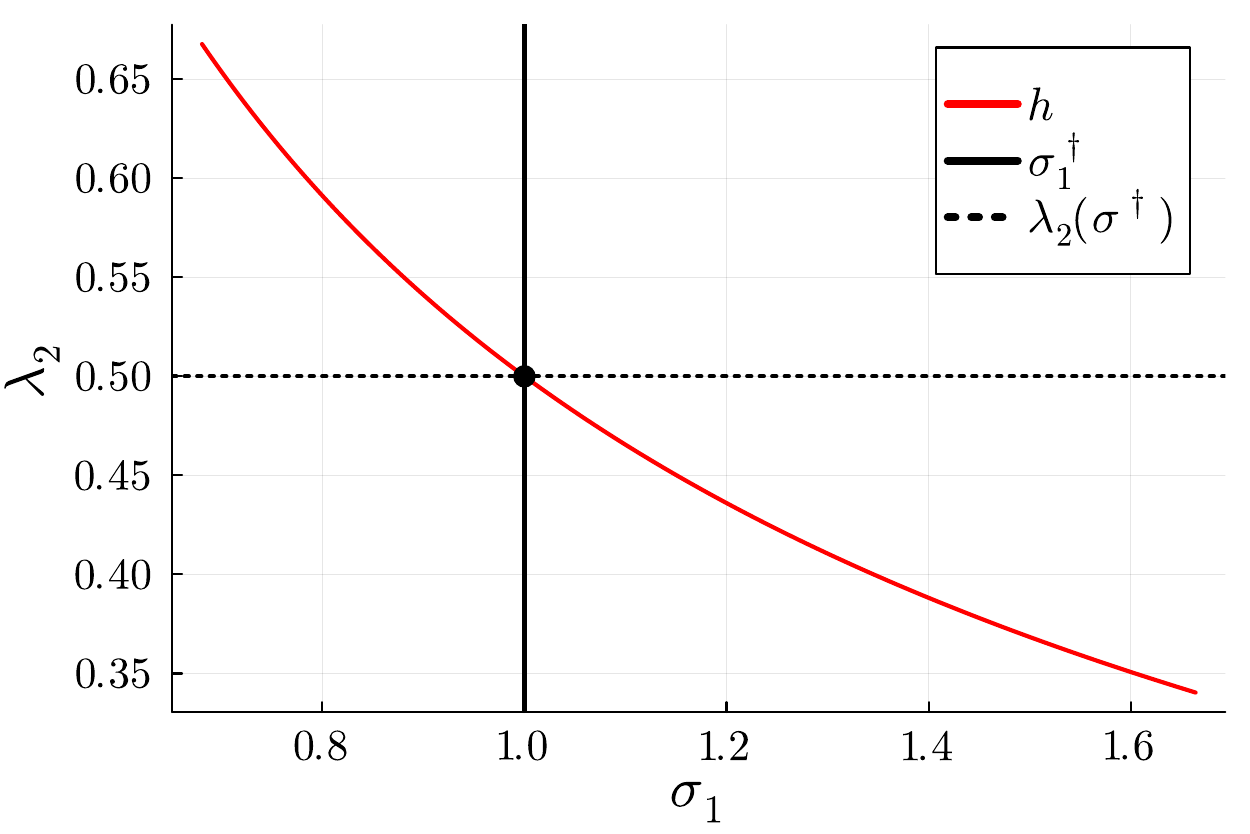}}
\caption{graph of the functions $g$ and $h$ constructed in the proof of \Cref{prop:2d_inj}, with $\sigma^\dagger=\mathbf{1}$. The function $h$ is strictly monotone, showing that the forward map is injective.}
    \label{fig_g_h}
\end{figure}

     Now consider $h(\sigma_1) = \lambda_2(\sigma_1,g(\sigma_1))$. Since $g(\sigma_1^\dagger) = \sigma_2^\dagger$ and $g(\hat \sigma_1) =\hat  \sigma_2$, by  assumption, $h(\sigma_1^\dagger) = h(\hat \sigma_1)$. Observe also that
     $$
     h'(\sigma_1) = \partial_1\lambda_2(\sigma_1,g(\sigma_1)) + \partial_2 \lambda_2(\sigma_1,g(\sigma_1)) g'(\sigma_1) = \frac{(\partial_1\lambda_2 \partial_2 \lambda_1 - \partial_1\lambda_1 \partial_2\lambda_2) }{\partial_2\lambda_1},
     $$
     where each term on the right-hand side is evaluated at $(\sigma_1,g(\sigma_1))$. We have $h'(\sigma_1)>0$ for every $\sigma_1 \in (\sigma^\dagger_1,\hat \sigma_1)$ by Lemma~\ref{lem:2d}, which contradicts the assumption that $h$ takes the same value on two distinct points.    
\end{proof}

Finally, we prove that, when $n=2$ and $m\geq 2$, the least squares objective does not have any spurious critical point.
\begin{proposition}
    If $n=2$ and $m\geq 2$, for every $\sigma^\dagger\in(\RR_{>0})^n$, the least squares objective $$f:\sigma\mapsto \frac{1}{2}\|\Lambda(\sigma)-\Lambda(\sigma^\dagger)\|_2^2$$ has a unique critical point on $(\RR_{>0})^n$, which is $\sigma^\dagger$.
    \label{prop_critical_point}
\end{proposition}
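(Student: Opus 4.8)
The plan is to leverage the two results already established — the non-vanishing of the Jacobian determinant (Lemma~\ref{lem:2d}) and the injectivity of the forward map (Proposition~\ref{prop:2d_inj}) — which together reduce the statement to an almost immediate computation. The key structural observation is that, for a least squares objective, the gradient factors through the transpose of the Jacobian, and an everywhere-invertible Jacobian collapses the critical point equation onto the zero set of the residual.

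First I would compute the gradient of $f$. Writing $f(\sigma) = \frac{1}{2}\sum_{j=1}^m (\lambda_j(\sigma) - \lambda_j(\sigma^\dagger))^2$ and differentiating with the chain rule, one gets
$$
\nabla f(\sigma) = \Lambda'(\sigma)^{\top}\bigl(\Lambda(\sigma) - \Lambda(\sigma^\dagger)\bigr),
$$
so that $\sigma$ is a critical point of $f$ if and only if the residual $\Lambda(\sigma) - \Lambda(\sigma^\dagger)$ lies in the kernel of $\Lambda'(\sigma)^{\top}$.

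Next I would invoke Lemma~\ref{lem:2d}. Since $\det(\Lambda'(\sigma)) < 0$, and in particular $\det(\Lambda'(\sigma)) \neq 0$, for every $\sigma \in (\RR_{>0})^2$, the Jacobian $\Lambda'(\sigma)$ is invertible and hence so is its transpose. Therefore the kernel of $\Lambda'(\sigma)^{\top}$ is trivial, and the critical point equation forces $\Lambda(\sigma) = \Lambda(\sigma^\dagger)$. Applying injectivity (Proposition~\ref{prop:2d_inj}) then yields $\sigma = \sigma^\dagger$. Conversely, the residual vanishes at $\sigma^\dagger$, so $\sigma^\dagger$ is indeed a critical point, and it is the unique one.

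The notable feature of this argument is that there is essentially no remaining obstacle: the entire difficulty has been front-loaded into the global nonsingularity of the Jacobian established in Lemma~\ref{lem:2d}. Once that is granted, the (possible) nonconvexity of $f$ becomes irrelevant to the \emph{location} of its critical points, since the least squares gradient can vanish only where the residual itself does. If I had to anticipate a subtlety, it would be purely in ensuring that the reduction is valid on the open domain $(\RR_{>0})^2$ — i.e.\ that we never reach the boundary where $\sigma_i = 0$ — but this is automatic because both prior results hold throughout the open positive orthant, so no boundary or coercivity analysis is needed.
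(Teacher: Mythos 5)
Your proof is correct and follows the same route as the paper's: the gradient is $\Lambda'(\sigma)^{\top}(\Lambda(\sigma)-\Lambda(\sigma^\dagger))$, Lemma~\ref{lem:2d} gives invertibility of the Jacobian so the residual must vanish, and Proposition~\ref{prop:2d_inj} then forces $\sigma=\sigma^\dagger$. Your writing of the transpose is in fact slightly more careful than the paper's.
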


\begin{proof}
    First, let us stress that the result for $m=2$ follows directly from \Cref{lem:2d}. Indeed, if $\sigma$ is a critical point then $\nabla f(\sigma)=[\Lambda'(\sigma)]^T(\Lambda(\sigma) - \Lambda(\sigma^\dagger)) = 0$. Since $[\Lambda'(\sigma)]^T\in\RR^{2\times 2}$ is injective (by \Cref{lem:2d}), this implies $\Lambda(\sigma) - \Lambda(\sigma^\dagger)= 0$. By injectivity of $\Lambda$ (Proposition~\ref{prop:2d_inj}), we obtain $\sigma= \sigma^\dagger$. This argument does not carry on to the general case, as $[\Lambda'(\sigma)]^T\in \RR^{2\times m}$ cannot be full rank when $m>2$.

    For the general case $(m\geq 2)$, we define
    \begin{equation*}
        \begin{aligned}
            \Phi\colon (\RR_{>0})^2&\to \RR_{>0}\times (-1,1)\\
            (\sigma_1,\sigma_2)&\mapsto \bigg(\frac{1}{\sigma_1},\frac{\sigma_1-\sigma_2}{\sigma_1+\sigma_2}\bigg)
        \end{aligned}
        \qquad\qquad\mathrm{and}\qquad\qquad
        \begin{aligned}
            \mu_j\colon \RR_{>0}\times (-1,1)&\to \RR\\
            (x, y)&\mapsto \frac{x}{j}\frac{1+r_1^{2j}y}{1-r_1^{2j}y}
        \end{aligned}
    \end{equation*}
    as well as $(x^\dagger,y^\dagger)=\Phi(\sigma^\dagger)$ and $g:(x,y)\in\RR_{>0}\times (-1,1)\to (1/2)\sum_{j=1}^m (\mu_j(x,y)-\mu_j(x^\dagger,y^\dagger))^2$. As $f=g\circ \Phi$ and $\Phi$ is a diffeomorphism, a point $\sigma$ is a critical point of $f$ if and only if $\Phi(\sigma)$ is a critical point of $g$. Therefore, we only have to prove that $(x^\dagger, y^\dagger)$ is the only critical point of $g$.

    Let $(x,y)\in \RR_{>0}\times (-1,1)$ be fixed. We will show that, if $(x,y)\neq (x^\dagger,y^\dagger)$, then $(x,y)$ is not a critical point of $g$. Let us first assume that 
    \begin{equation}\label{ass_sign}
        (x-x^\dagger)\bigg(x\frac{1+y}{1-y}-x^\dagger \frac{1+y^\dagger}{1-y^\dagger}\bigg)<0.
    \end{equation}
    Let $R$ be the unique real number in $(0,1)$ such that
    \begin{equation*}
        x \frac{1+Ry}{1-Ry}=x^\dagger \frac{1+Ry^\dagger}{1-R y^\dagger}
    \end{equation*}
    which exists and is unique by \eqref{ass_sign}. Then, for every $r\in\RR$, it holds (from the factorization rules for degree two polynomials)
    \begin{equation*}
        x\frac{1+ry}{1-ry}-x^\dagger \frac{1+ry^\dagger}{1-ry^\dagger}=\frac{x^\dagger-x}{(1-ry)(1-ry^\dagger)}(r-R)(yy^\dagger r+1/R).
    \end{equation*}
    As a result, for every $1\leq j\leq m$, it holds 
    \begin{equation*}
        \mu_j(x,y)-\mu_j(x^\dagger,y^\dagger)=\frac{1}{j}\frac{x^\dagger-x}{(1-r_1^{2j}y)(1-r_1^{2j}y^\dagger)}(r_1^{2j}-R)(yy^\dagger r_1^{2j}+1/R).
    \end{equation*}
    For every $1\leq j\leq m$ and $(\alpha,\beta)\in\RR^2$, it holds
    \begin{equation*}
        d\mu_j(x,y)\cdot (\alpha,\beta)=\frac{1}{j(1-r_1^{2j}y)}\bigg[\alpha (1+r_1^{2j}y)+\frac{2r_1^{2j}\beta x}{1-r_1^{2j}y}\bigg].
    \end{equation*}
    We evaluate this expression at $\alpha=-2Rx/(1-Ry)$ and $\beta=1+Ry$ and obtain
    \begin{equation*}
        \begin{aligned}
            d\mu_j(x,y)\cdot(\alpha,\beta)&=\frac{1}{j(1-r_1^{2j}y)}\bigg[-\frac{2R(1+r_1^{2j}y)x}{1-Ry}+\frac{2r_1^{2j}(1+Ry)x}{1-r_1^{2j}y}\bigg]\\
            &=\frac{2x}{j(1-r_1^{2j}y)^2 (1-Ry)}(r_1^{2j}-R)(1+R r_1^{2j}y^2).
        \end{aligned}
    \end{equation*}
    Now, it holds that 
    \begin{equation*}
        \begin{aligned}
            dg(x,y)\cdot (\alpha,\beta)&=\sum\limits_{j=1}^m (\mu_j(x,y)-\mu_j(x^\dagger,y^\dagger))d\mu_j(x,y)\cdot(\alpha,\beta)\\
            &=\sum\limits_{j=1}^m \frac{1}{j^2}\frac{2x(x^\dagger-x)}{(1-r_1^{2j}y)^3(1-r_1^{2j}y^\dagger)(1-Ry)}(r_1^{2j}-R)^2 (yy^\dagger r_1^{2j}+1/R)(1+Rr_1^{2j}y^2).
        \end{aligned}
    \end{equation*}
    All terms inside the sum have the same sign as $x^\dagger-x$. Indeed $2x$, $1-r_1^{2j}y$, $1-r_1^{2j}y^\dagger$, $1-Ry$ and $1+Rr_2^{2j}y^2$ are all positive and $yy^\dagger r_1^{2j}+1/R\geq -|y||y^\dagger|r_1^{2j}+1/R> -1+1/R>0$ (since $R\in(0,1)$). The terms in the sum can only be zero if $r_1^{2j}=R$, which occurs for at most one value of $j$. Therefore, we have a sum of real numbers which have the same sign and are not all zero, meaning that $dg(x,y)\cdot(\alpha,\beta)\neq 0$ and $(x,y)$ is not a critical point.

    Now, we consider the case where \eqref{ass_sign} does not hold. This case is easier, as one can show that the sign of 
    \begin{equation*}
        r\in[0,1]\mapsto x\frac{1+ry}{1-ry}-x^\dagger \frac{1+ry^\dagger}{1-ry^\dagger}
    \end{equation*}
    does not change. Indeed, the fact that~\eqref{ass_sign} does not hold means that this map has the same sign at $0$ and $1$. If $yy^\dagger\leq 0$, it is a monotonic function, so it cannot change sign over $[0,1]$. If $yy^\dagger > 0$, one can check by an explicit computation that it has two zeros, with opposite signs, so that it cannot cancel more than once in $(0,1)$. If it has the same sign at $0$ and $1$, it therefore does not cancel in $(0,1)$ at all, and the sign does not change over the interval.
    
    Hence $\mu_j(x,y)-\mu_j(x^\dagger,y^\dagger)$ has the same sign for all values of $j$. Therefore, we have that 
    \begin{equation*}
        dg(x,y)\cdot(1,0)=\sum\limits_{j=1}^m (\mu_j(x,y)-\mu_j(x^\dagger,y^\dagger))\frac{1+r_1^{2j}y}{j(1-r_1^{2j}y)}
    \end{equation*}
    is a sum of terms with the same sign. Unless $\mu_j(x,y)=\mu_j(x^\dagger,y^\dagger)$ for every $1\leq j\leq m$ (which is possible only if $(x,y)=(x^\dagger,y^\dagger)$), not all terms are zero, hence the sum is also non-zero and $(x,y)$ is not a critical point.
\end{proof}

\subsection{The higher dimensional case}
In this section, we discuss the extension of the above results to the case $n>2$. We restrict ourselves to the case $m=n$. Given $\sigma\in(\RR_{>0})^n$, we denote by $\Lambda_{[n-1]}'(\sigma) \in \RR^{(n-1)\times (n-1)}$ the submatrix of $\Lambda'(\sigma)\in\RR^{n\times n}$ obtained by removing its last row (corresponding to the last measurement) and its first column (corresponding to the outermost annulus).

\subsubsection{Main result}
Our main result shows that, provided the following two conditions hold, the least squares objective has a unique critical point, which is the unknown conductivity. We discuss these conditions in greater details at the end of this section.
\begin{itemize}
    \item[(i)] Jacobian invertibility: for every $n\geq 2$ and every $\sigma\in (\RR_{>0})^n$, $\Lambda'(\sigma)$ and $\Lambda'_{[n-1]}(\sigma)$ are invertible.

    \item[(ii)] Alternating signs: for every $n\geq 2$, if $\sigma^\dagger, \hat \sigma \in (\RR_{>0})^n$ are such that $\sigma^\dagger_1 < \hat \sigma_1$ and $\lambda_j(\sigma^\dagger) = \lambda_j(\hat \sigma)$ for every $j=1,\ldots, n-1$, then $\mathrm{sign}( \sigma^\dagger_i -  \hat \sigma_i) = (-1)^i$ for every $2\leq i\leq n$, that is to say $\sigma_2^\dagger>\hat \sigma_2^\dagger$, $\sigma_3^\dagger<\hat \sigma_3^\dagger$, and so on.
\end{itemize}

\begin{proposition}
    Assume that conditions (i) and (ii) hold. Then, for every $n\geq 2$ and $m=n$, the least squares objective $f:\sigma\mapsto(1/2)\|\Lambda(\sigma)-\Lambda(\sigma^\dagger)\|_2^2$ has $\sigma^\dagger$ as its  unique critical point in $(\RR_{>0})^n$.
    \label{prop_absence_critical_point_d}
\end{proposition}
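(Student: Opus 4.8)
The plan is to reduce the statement to two ingredients already isolated in the two‑dimensional analysis: the invertibility of the full Jacobian $\Lambda'(\sigma)$ and the global injectivity of $\Lambda$. Once these are in hand, the critical point argument of Proposition~\ref{prop_critical_point} carries over verbatim. Indeed, if $\sigma$ is a critical point of $f$ then $\nabla f(\sigma)=\Lambda'(\sigma)^{\top}(\Lambda(\sigma)-\Lambda(\sigma^\dagger))=0$; since condition~(i) guarantees that $\Lambda'(\sigma)$, and hence $\Lambda'(\sigma)^{\top}$, is invertible, this forces $\Lambda(\sigma)=\Lambda(\sigma^\dagger)$, and injectivity of $\Lambda$ then yields $\sigma=\sigma^\dagger$. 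Thus the whole task is to upgrade Proposition~\ref{prop:2d_inj} to arbitrary $n$ under hypotheses (i) and (ii).

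For injectivity I would argue by contradiction, mimicking the structure of the proof of Proposition~\ref{prop:2d_inj}. Suppose $\sigma^\dagger\neq\hat\sigma$ satisfy $\Lambda(\sigma^\dagger)=\Lambda(\hat\sigma)$. By strict monotonicity (Proposition~\ref{prop_forward_map}) the two vectors cannot be ordered, so after relabeling we may assume $\sigma^\dagger_1<\hat\sigma_1$, and condition~(ii) then pins down the sign pattern $\mathrm{sign}(\sigma^\dagger_i-\hat\sigma_i)=(-1)^i$ for $2\le i\le n$. Consider the level set $L=\{\sigma\in(\RR_{>0})^n:\lambda_j(\sigma)=\lambda_j(\sigma^\dagger),\ j=1,\dots,n-1\}$. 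The derivative of $(\lambda_1,\dots,\lambda_{n-1})$ with respect to $(\sigma_2,\dots,\sigma_n)$ is exactly the submatrix $\Lambda'_{[n-1]}(\sigma)$, which is invertible by condition~(i); hence $(\lambda_1,\dots,\lambda_{n-1})$ is a submersion near $L$, so $L$ is a one‑dimensional $C^1$ manifold and the implicit function theorem lets us use $\sigma_1$ as a local coordinate, writing $L$ locally as a graph $\sigma_1\mapsto(\sigma_1,G(\sigma_1))$ with $G$ of class $C^1$.

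The heart of the argument is the strict monotonicity of the last measurement along such a branch. Setting $\phi(\sigma_1)=\lambda_n(\sigma_1,G(\sigma_1))$ and differentiating the $n-1$ defining constraints, the tangent vector $(1,G'(\sigma_1))$ spans the kernel of the first $n-1$ rows of $\Lambda'$; a cofactor expansion of that kernel vector, followed by expansion of $\det\Lambda'$ along its last row, then yields the clean identity
\begin{equation*}
    \phi'(\sigma_1)=(-1)^{n-1}\,\frac{\det\Lambda'(\sigma)}{\det\Lambda'_{[n-1]}(\sigma)},\qquad \sigma=(\sigma_1,G(\sigma_1)),
\end{equation*}
which reduces to the expression for $h'$ in Proposition~\ref{prop:2d_inj} when $n=2$. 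By condition~(i) both determinants are nonvanishing on the connected curve, so $\phi'$ has constant sign and $\phi$ is strictly monotone; in particular the only point of the branch through $\sigma^\dagger$ whose full measurement vector equals $\Lambda(\sigma^\dagger)$ is $\sigma^\dagger$ itself. Consequently, showing that $\hat\sigma$ lies on the same branch of $L$ as $\sigma^\dagger$ would force $\hat\sigma=\sigma^\dagger$, the desired contradiction.

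I expect the main obstacle to be precisely this last point: proving that the implicit branch through $\sigma^\dagger$ extends over all of $[\sigma^\dagger_1,\hat\sigma_1]$ inside $(\RR_{>0})^n$ and contains $\hat\sigma$. The natural tool is condition~(ii): applied to each point $(\sigma_1,G(\sigma_1))$ with $\sigma_1>\sigma^\dagger_1$, it enforces the alternating sign pattern relative to $\sigma^\dagger$, giving one‑sided bounds on every component of $G$ (even‑indexed components below $\sigma^\dagger_i$, odd‑indexed ones above). Converting these one‑sided constraints into the two‑sided bounds needed to rule out escape to the boundary of the positive orthant or to infinity — and thereby obtaining, via a maximal‑continuation argument, that the branch is a graph over the full interval — is the delicate step, to be closed by combining the sign information with the boundedness and monotonicity of $\Lambda$ from Proposition~\ref{prop_forward_map}. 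Since $\sigma_1$ is a strictly monotone global parameter on each connected component of the regular level set $L$, identifying the branch's endpoint at $\sigma_1=\hat\sigma_1$ with $\hat\sigma$ then completes the contradiction and establishes injectivity, hence the proposition.
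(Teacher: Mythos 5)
Your overall strategy coincides with the paper's: reduce to injectivity via the invertibility of $\Lambda'(\sigma)$ from condition (i), parametrize the level set of the first $n-1$ measurements by $\sigma_1$ using the implicit function theorem and the invertibility of $\Lambda'_{[n-1]}$, and derive the contradiction from $\phi'=(-1)^{n-1}\det\Lambda'/\det\Lambda'_{[n-1]}\neq 0$ together with Rolle's theorem (your sign factor is in fact more careful than the paper's, which writes the identity without the $(-1)^{n-1}$; this is immaterial since only nonvanishing is used). However, you explicitly leave open the step you yourself flag as delicate --- confining the branch so that it extends over all of $[\sigma_1^\dagger,\hat\sigma_1]$ and reaches $\hat\sigma$ --- and this is precisely where the paper supplies the missing idea. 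The two-sided bounds you are after come from applying condition (ii) \emph{twice} at each point $\sigma$ of the level set $C$ with $\sigma_1\in(\sigma_1^\dagger,\hat\sigma_1)$: once to the pair $(\sigma^\dagger,\sigma)$, giving $\sigma_2<\sigma_2^\dagger$, $\sigma_3>\sigma_3^\dagger$, and so on, and once to the pair $(\sigma,\hat\sigma)$, giving $\sigma_2>\hat\sigma_2$, $\sigma_3<\hat\sigma_3$, and so on. Together these trap the relevant piece of $C$ inside the compact cuboid $B=[\hat\sigma_2,\sigma_2^\dagger]\times[\sigma_3^\dagger,\hat\sigma_3]\times\cdots$, which rules out escape to $0$ or to infinity and lets your maximal-continuation argument close; without this observation your one-sided constraints are not enough.

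A second, smaller gap: you justify ``after relabeling we may assume $\sigma_1^\dagger<\hat\sigma_1$'' by noting that the two vectors cannot be ordered, but that only excludes $\sigma^\dagger\le\hat\sigma$ and $\hat\sigma\le\sigma^\dagger$; it does not exclude $\sigma_1^\dagger=\hat\sigma_1$ with the remaining components differing in both directions, a case in which condition (ii) (which requires strict inequality in the first component) cannot be invoked and $\sigma_1$ cannot serve as the parameter along the branch. The paper disposes of this by proving injectivity by induction on $n$ and using the induction hypothesis to argue that no component of $\sigma^\dagger$ and $\hat\sigma$ can coincide; your direct (non-inductive) argument needs some substitute for that step.
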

\begin{proof}
    Just as in the $n=2$ setting, since condition (i) guarantees that $\Lambda'(\sigma)$ is invertible, it is sufficient to show that $\Lambda$ is injective. We prove the injectivity by induction on $n$. For $n=2$, this follows by Proposition~\ref{prop:2d_inj}. Assume now that the result is true for $n-1$. Suppose by contradiction that there exist distinct $\sigma^\dagger$ and $\hat \sigma$ such that $\Lambda(\sigma^\dagger) = \Lambda(\hat \sigma)$. Note that $\sigma^\dagger_i\neq \hat \sigma_i$ for all $i$, otherwise, this contradicts our induction assumption on the injectivity of $\Lambda$. Without loss of generality, we assume that $\hat \sigma_1>\sigma_1^\dagger$. By the alternating signs condition (ii), we have $\sigma_2^\dagger>\hat \sigma_2^\dagger$, $\sigma_3^\dagger<\hat \sigma_3^\dagger$, and so on.

    Let $I\eqdef [\sigma_1^\dagger,\hat \sigma_1]$. We first claim that for all $\sigma_1\in I$, there exists a differentiable function $g\colon I \to (\RR_{>0})^{n-1}$ such that 
    $$
    \forall j=1,\ldots,n-1, \quad \lambda_j(\sigma_1, g(\sigma_1))  = \lambda_j(\sigma^\dagger).
    $$
    Certainly, any such $g$ satisfies $g(\sigma^\dagger_1) = (\sigma^\dagger_i)_{i=2}^{n-1}$ and $g(\hat \sigma_1) = (\hat \sigma_i)_{i=2}^{n-1}$.  Note also that  by the implicit function theorem, any such  $g$  has differential
    \begin{equation}\label{eq:derivative_g}
    g'(\sigma_1) = -(\Lambda_{[n-1]}')^{-1} \left(\partial_1 \lambda_j\right)_{j=1}^{n-1},
    \end{equation}
    where, as a slight abuse of notation, we write $\Lambda_{[n-1]}' \eqdef\Lambda_{[n-1]}'(\sigma_1,g(\sigma_1)) $. Note that, by assumption (i), $\Lambda_{[n-1]}'$ is invertible.

    We now claim that $g$ is well defined on $I$  and maps into the cuboid
    $$
    B = [\hat \sigma_2,  \sigma_2^\dagger] \times [\sigma_3^\dagger, \hat \sigma_3] \times [\hat \sigma_4,  \sigma_4^\dagger] \times \cdots .
    $$
    Indeed, since $C = \{\sigma\in(\RR_{>0})^n:\;\forall j=1,\ldots, n-1,\; \lambda_j(\sigma) = \lambda_j(\sigma^\dagger)\}$ is a closed set, the only reason why $g$ might not exist on $I$ is that $g(\sigma_1)_i$ diverges to infinity or converges to $0$. This cannot happen because we can  apply the implicit function theorem at $\hat \sigma$ (and $\sigma^\dagger$) to conclude that $g$ exists on a neighborhood around $\hat \sigma$ (respectively $\sigma^\dagger$). So, there exists $\epsilon>0$ such that $g(\sigma_1^\dagger + \epsilon)$ and $g(\hat \sigma_1-\epsilon)$ are well defined and are strictly inside the set $B$ by the alternating signs assumption.
    Note also that since the set $C$ is closed, if it is made of disjoint closed intervals inside $B$, then we can apply the implicit function theorem to the end points of these intervals to extend $g$. So, there exists a continuously differentiable function $g\colon I\to B$.

    Now consider $h\colon I\to (\RR_{>0})^{n-1}$ defined by $h(\sigma_1) = \lambda_n(\sigma_1, g(\sigma_1))$. We aim to show that $h'(\sigma_1)\neq 0$ for all $\sigma_1 \in I$ which would lead to a contradiction by Rolle's theorem since we know that $h(\sigma_1^\dagger) = h(\hat \sigma_1) = \lambda_n(\sigma^\dagger)$. Indeed, 
    $$
    h'(\sigma_1) = \partial_1 \lambda_n + \langle{(\partial_i \lambda_n)_{i=2}^{n}},{g'(\sigma_1)}\rangle = \partial_1 \lambda_n - \langle{(\partial_i \lambda_n)_{i=2}^{n}},{(\Lambda'_{[n-1]})^{-1} (\partial_1\lambda_j)_{j=1}^{n-1}}\rangle =\frac{ \mathrm{det}(\Lambda'(\sigma))}{\mathrm{det}(\Lambda_{[n-1]}'(\sigma))},
    $$
    where the final equality is due to the determinant formula for block matrices. Since $\Lambda'(\sigma)$ is invertible, we must have $h'(\sigma_1)\neq 0$ for all $\sigma_1 \in I$.
\end{proof}

\begin{remark}
     We mention here that if condition (i) was relaxed to $\Lambda'(\sigma)$ is invertible for every $n\geq 2$ and every $\sigma \in B\eqdef [a,b]^n$ for some $0<a<b$ and $\sigma^\dagger$ is in the interior of $B$, then our proof implies that the least squares objective $f$ has only one critical point in the interior of $B$.  As we discuss in the following section, in practice, we observe numerically that the Jacobian $\Lambda'(\sigma)$  has non-zero determinant for every $\sigma\in(\RR_{>0})^n$.
\end{remark}

\subsubsection{Discussion on the assumptions} Now, we go back to the two conditions under which the result of \Cref{prop_absence_critical_point_d} holds. 
\paragraph{Condition (i).} 
We first rigorously prove that this condition is satisfied in the case where $\sigma= \mathbf{1}$. The proof makes use of the fact that, in this case, we only need to deal with harmonic functions, whose explicit expression is simple.
\begin{lemma}
    If $\sigma= \mathbf{1}$ and $m=n$ then 
    \begin{equation}\label{eq:sign_det}
    (-1)^{\frac{n(n+1)}{2}}\det \Lambda'(\sigma) >0 \quad \text{and}\quad  (-1)^{\frac{n(n-1)}{2}}\det \Lambda_{[n-1]}'(\sigma) >0.
\end{equation}
\end{lemma}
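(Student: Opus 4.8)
The plan is to use the hint directly: at $\sigma=\mathbf{1}$ the potentials $u_j$ are harmonic and have a trivial closed form, which makes the whole Jacobian explicit, after which each determinant reduces to a Vandermonde determinant whose sign can simply be read off. First I would note that $\sigma=\mathbf{1}$ forces every reflection coefficient $\rho_i=(\sigma_i-\sigma_{i+1})/(\sigma_i+\sigma_{i+1})$ to vanish, so the recurrence \eqref{rec_rel_C} yields $C_{i,j}=0$ for all $i,j$; hence $\beta_{i,j}=0$ and the normalisation $\sigma_1 j(\alpha_{1,j}-\beta_{1,j})=1$ gives $\alpha_{i,j}=1/j$, so that $u_j(r,\theta)=\frac{1}{j}r^j\cos(j\theta)$. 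Computing in polar coordinates gives $|\nabla u_j|^2=r^{2j-2}$, and plugging this into the derivative formula \eqref{eq:derivative_explicit} produces the fully explicit Jacobian
\[
\partial_i\lambda_j(\mathbf{1})=-\frac{1}{\pi}\int_{A_i} r^{2j-2}=-\frac{1}{j}\bigl(r_{i-1}^{2j}-r_i^{2j}\bigr).
\]

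For the first determinant I would set $t_i\eqdef r_i^2$, so that $1=t_0>t_1>\cdots>t_n=0$, and factor $-1/j$ out of each row $j$; this writes $\det\Lambda'(\mathbf{1})$ as $\frac{(-1)^n}{n!}$ times the determinant of the matrix $M$ with entries $M_{j,i}=t_{i-1}^{\,j}-t_i^{\,j}$. The key manipulation is a telescoping column operation: replacing each column $i$ by the sum of columns $i,i+1,\ldots,n$ is a triangular unit-diagonal operation (hence determinant-preserving) that collapses column $i$ to $t_{i-1}^{\,j}-t_n^{\,j}=t_{i-1}^{\,j}$, using crucially that $t_n=0$. Factoring $t_{i-1}$ out of each column then leaves exactly the Vandermonde matrix in the nodes $t_0>t_1>\cdots>t_{n-1}$, whose determinant is $\prod_{1\le i<i'\le n}(t_{i'-1}-t_{i-1})$. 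Each of the $\binom{n}{2}$ factors is negative, so this determinant has sign $(-1)^{n(n-1)/2}$; combined with the row factor $(-1)^n$ this gives total sign $(-1)^{n+n(n-1)/2}=(-1)^{n(n+1)/2}$, which is the first claim.

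For the submatrix $\Lambda'_{[n-1]}(\mathbf{1})$ I would run the identical argument on the $(n-1)\times(n-1)$ matrix keeping rows $j=1,\ldots,n-1$ and columns $i=2,\ldots,n$. Factoring $-1/j$ from each row contributes $(-1)^{n-1}$, and the same telescoping (again using $t_n=0$) collapses column $i$ to $t_{i-1}^{\,j}$, producing after column-scaling a Vandermonde matrix in the nodes $t_1>\cdots>t_{n-1}$ with $\binom{n-1}{2}$ negative factors, hence sign $(-1)^{(n-1)(n-2)/2}$. The two contributions combine to $(-1)^{(n-1)+(n-1)(n-2)/2}=(-1)^{n(n-1)/2}$, giving the second claim.

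I do not expect any deep obstacle here; the computation is made elementary by the harmonicity at $\sigma=\mathbf{1}$. The only points needing care are verifying that the telescoping column operation is determinant-preserving and that it collapses correctly---both of which hinge on the boundary value $r_n=0$ making the telescoped tail vanish---together with the two exponent simplifications $n+\tfrac{n(n-1)}{2}=\tfrac{n(n+1)}{2}$ and $(n-1)+\tfrac{(n-1)(n-2)}{2}=\tfrac{n(n-1)}{2}$ and the Vandermonde sign convention. The main risk is thus purely one of sign bookkeeping.
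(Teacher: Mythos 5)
Your proposal is correct and follows essentially the same route as the paper: explicit harmonic solutions at $\sigma=\mathbf{1}$, the Jacobian entries $-\tfrac{1}{j}(r_{i-1}^{2j}-r_i^{2j})$, collapse of the telescoping differences using $r_n=0$ (the paper does this via multilinearity of the determinant, you via equivalent unit-triangular column operations), and a Vandermonde sign count; your sign bookkeeping for both determinants checks out.
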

\begin{proof}
    Since $\sigma= \mathbf{1}$, we have that $u_j(r,\theta) = j^{-1}r^j \cos(j\theta)$ (see Section~\ref{sub:setting}), so that $\|\nabla u_j(r,\theta)\|_2^2=r^{2(j-1)}$, for $j=1,\dots,n$. Thus, by \eqref{eq:derivative_explicit}, we have 
    \[
    \partial_i\lambda_j(\sigma)=-\frac{1}{\pi}\int_{r_i}^{r_{i-1}} \int_0^{2\pi}\|\nabla u_j(r,\theta)\|_2^2 r\,d\theta dr = - j^{-1} (r_{i-1}^{2j}-r_{i}^{2j}) .
    \]
    Let $A_i\in \RR^n$ be the column vector given by $(A_i)_j = R_i^{j}$, where $R_i = r_i^2$. Note that $A_n=0$. Then, using that the determinant is a multilinear function, we obtain
    \[
   \det \Lambda'(\sigma) = \frac{(-1)^n}{n!} \det [A_0-A_1,A_1-A_2,\dots,A_{n-2}-A_{n-1},A_{n-1}]
   = \frac{(-1)^n}{n!} \det [A_0,A_1,\dots,A_{n-2},A_{n-1}].
    \]
    Therefore
\[
\det \Lambda'(\sigma) = \frac{(-1)^n}{n!} \det
\begin{bmatrix}
    R_0 & R_1& \cdots & R_{n-1} \\
    R_0^2 & R_1^2& \cdots & R_{n-1}^2 \\
    \vdots & \vdots & \dots & \vdots \\
    R_0^n & R_1^n& \cdots & R_{n-1}^n 
\end{bmatrix}
= \frac{(-1)^n}{n!} \left(\prod_{i=1}^{n-1} R_i\right)\det
\begin{bmatrix}
    1 & 1& \cdots & 1 \\
    R_0 & R_1& \cdots & R_{n-1} \\
    \vdots & \vdots & \dots & \vdots \\
    R_0^{n-1} & R_1^{n-1}& \cdots & R_{n-1}^{n-1} 
\end{bmatrix}.
\]
The latter is a Vandermonde matrix, for which the determinant can be explicitly computed. More precisely, we have
\[
\det \Lambda'(\sigma) = \frac{(-1)^n}{n!} \left(\prod_{i=1}^{n-1} R_i\right) \prod_{0\le i<j\le n-1} (R_j-R_i)=(-1)^{n+\frac{n(n-1)}{2}} \frac{1}{n!} \left(\prod_{i=1}^{n-1} R_i\right) \prod_{0\le i<j\le n-1} (R_i-R_j).
\]
This concludes the proof of the result concerning the sign of $\det \Lambda'(\sigma)$, since $R_i>R_j$ for $i<j$. 

Concerning the sign of $\det \Lambda'_{[n-1]}(\sigma)$, we stress that removing the row corresponding to the outermost annulus corresponds to removing the column $A_0-A_1$ in the computation above. As a result, the same argument applies.
\end{proof}

Unfortunately, we have been unable to prove this rigorously  in the general case. This may require studying  the dependence of the integrals of $\|\nabla u_j\|_2^2$ on $j$ and on the radii of the subdomains quantitatively, see \cite{garofalo-lin-1986,dicristo-rondi-2021}. As stated in \Cref{subsec_numerical_study_ip}, we always observe numerically that \eqref{eq:sign_det} holds, regardless of the value of $n$ and of $\sigma$. In \Cref{table_jac_det}, we show the minimum value of $(-1)^{n(n+1)/2}\det \Lambda'(\sigma)$, $(-1)^{n(n-1)/2}\det \Lambda_{[n-1]}'(\sigma)$, $\sigma_{\mathrm{min}}(\Lambda'(\sigma))$ (the smallest singular value of $\Lambda'(\sigma)$) and $\sigma_{\mathrm{min}}(\Lambda_{[n-1]}'(\sigma))$ over $I_{k}^n$, where $I_k=\{a+(i/(k-1))(b-a):0\leq i\leq k-1\}$ with $k=5$ and $(a,b)=(0.5, 1.5)$. The minimum of $(-1)^{n(n+1)/2}\det \Lambda'(\sigma)$ seems to be reached at $b\mathbf{1}$, so that increasing $b$ would result in an even smaller value for the minimum.
\begin{table}
\setlength\extrarowheight{1pt}
\begin{center}
\begin{tabular}{|l||c|c|c|c|c|} 
 \hline
 Number of annuli $n$ & 2 & 3 & 4 & 5 & 6\\
 \hline
 Order of min.\ of  $(-1)^{n(n+1)/2}\det \Lambda'(\sigma)$ & $10^{-2}$ & $10^{-4}$ & $10^{-7}$ & $10^{-11}$ & $10^{-15}$\\
  \hline
 Order of min.\ of  $(-1)^{n(n-1)/2}\det \Lambda_{[n-1]}'(\sigma)$ & $10^{-1}$ & $10^{-2}$ & $10^{-4}$ & $10^{-7}$ & $10^{-11}$\\
 \hline
  Order of min.\ of  $\mathrm{min}_{1\leq k\leq n}\,(-1)^{n(n-1)/2}\det M_k(\sigma)$ & $10^{-1}$ & $10^{-3}$ & $10^{-6}$ & $10^{-9}$ & $10^{-13}$\\
 \hline
Order of min.\ of  $\sigma_{\mathrm{min}}(\Lambda'(\sigma))$ & $10^{-2}$ & $10^{-3}$ & $10^{-4}$ & $10^{-5}$ & $10^{-6}$\\ 
 \hline
 Order of min.\ of  $\sigma_{\mathrm{min}}(\Lambda_{[n-1]}'(\sigma))$ & $10^{-1}$ & $10^{-2}$ & $10^{-3}$ & $10^{-4}$ & $10^{-5}$\\ 
 \hline
\end{tabular}
\end{center}
\caption{order of the minimum value of $(-1)^{n(n+1)/2}\det \Lambda'(\sigma)$, $(-1)^{n(n-1)/2}\det \Lambda_{[n-1]}'(\sigma)$, $\mathrm{min}_{1\leq k\leq n}\,(-1)^{n(n-1)/2}\det M_k(\sigma)$, $\sigma_{\mathrm{min}}(\Lambda'(\sigma))$ and $\sigma_{\mathrm{min}}(\Lambda_{[n-1]}'(\sigma))$ over a discretization of $[a,b]^n$ as a function of $n$, with $m=n$ and $(a,b)=(0.5, 1.5)$.}
\label{table_jac_det}
\end{table}

\paragraph{Condition (ii).}
The alternating signs condition can be shown to hold \emph{locally} under a slightly stronger Jacobian invertibility assumption, which we also verified numerically for small values of $n$ (see \Cref{table_jac_det}): \textit{for every ${n\geq 1}$, every $\sigma\in (\RR_{>0})^n$ and every $k\in \{1,\ldots, n\}$, $\mathrm{det}(M_k(\sigma))$ has constant sign (independent of $k$ and  $\sigma$), where ${M_k(\sigma) \in \RR^{(n-1)\times (n-1)}}$ is obtained by removing the last row and the $k$-th column of $\Lambda'(\sigma)$.
}

As in the proof of Proposition~\ref{prop_absence_critical_point_d}, by the implicit function theorem, there exists a $C^1$ function $g$ around $\sigma^\dagger_1$ taking values in $(\RR_{>0})^{n-1}$ such that
$$
\forall j=1,\ldots, n-1, \quad \lambda_j(\sigma_1,g(\sigma_1)) = \lambda_j(\sigma^\dagger).
$$ 
Moreover, locally around $\sigma^\dagger$, its derivative satisfies \eqref{eq:derivative_g}. By Cramer's rule, $g'(\sigma_1)_k = -\det(A_k)/\det(\Lambda_{[n-1]}')$ where $A_k$ is $\Lambda_{[n-1]}'$ with its $k$-th column replaced by $(\partial_1 \lambda_j)_{j=1}^{n-1}$.
By assumption, $\det(A_1)$ and $\det(\Lambda_{[n-1]}')$ have the same sign; $\det(A_2)$ and $\det(\Lambda_{[n-1]}')$  have opposite signs (since we need to swap the first and second columns of $A_2$ to have the columns and rows correspond to increasing annulus position and increasing measurement frequency). Likewise, $\det(A_3)$ and $\det(\Lambda_{[n-1]}')$  have the same signs (two column swaps needed) and so on. As a result, we obtain that $\mathrm{sign}(g_k'(\sigma_1)) = (-1)^k$.

\section{A convex programming approach}
\label{sec_convex}
In the last decades, various convexification strategies have been proposed \cite{klibanov2017globally,klibanovConvexificationElectricalImpedance2019}. More recently, \cite{harrachCalderonProblemFinitely2023} showed that the Calder\'on problem can be formulated as a convex nonlinear semidefinite program (SDP). To our knowledge, this is the first convex reformulation that is valid in the finite measurements and noisy setting. Given our finding that the least-squares objective has no spurious critical points, it is natural to compare the performance of the two approaches. As pointed out in \Cref{sec_previous}, the method introduced in \cite{harrachCalderonProblemFinitely2023} has not been implemented numerically, as the main result of this work is not constructive and how to find the minimal number of measurements and the parameters defining the convex program is not discussed. In this section, focusing on the case of radial piecewise constant conductivities, we propose a way to estimate these parameters and to solve the convex nonlinear SDP.

\subsection{Description of the approach}
Given an unknown conductivity $\sigma^\dagger$ and a priori bounds $a,b\in\RR_{>0}$ such that $a<b$ and $\sigma^\dagger\in [a,b]^n$, it was proposed in \cite{harrachCalderonProblemFinitely2023} to recover $\sigma^\dagger$ by solving the following problem
\begin{equation}
        \underset{\sigma\in[a,b]^n}{\mathrm{min}}~\langle c,\sigma\rangle~~\mathrm{s.t.}~~\Lambda(\sigma)\leq y,
        \tag{$\mathcal{P}_c(y)$}
        \label{problem_convex}
\end{equation}
where $c\in (\RR_{>0})^n$ is a suitably chosen weight vector and $y=\Lambda(\sigma^\dagger)$. Owing to the convexity of $\Lambda$, problem \Cref{problem_convex} is \emph{convex}. This is particularly appealing, as the nonlinearity of the forward map typically leads to the resolution of non-convex optimization problems (such as nonlinear least squares), which are in principle considerably harder to solve.

The main contribution of \cite{harrachCalderonProblemFinitely2023} is to show that, if $m$ is sufficiently large, there \emph{exists} a vector $c\in(\RR_{>0})^n$ such that, for every unknown conductivity $\sigma^\dagger\in[a,b]^n$, problem \Cref{problem_convex} with $y=\Lambda(\sigma^\dagger)$ has a unique solution, which is $\sigma^\dagger$ (see Theorem 3.2 in the above reference). In the following, we say that such a vector $c$ is \emph{universal}. The main issue with this result is that its proof is not constructive, and hence, $c$ is unknown in practice.

\subsection{Numerical implementation}
We now present a numerical procedure to estimate a universal vector $c$, which can then be used to investigate the properties of \eqref{problem_convex} for practical purposes. We first discuss an equivalent characterization of universality of the weight vector $c$ by first order optimality conditions and will exploit this characterization.

\paragraph{Optimality conditions.} Since problem \Cref{problem_convex} is convex, it is natural to investigate whether strong duality holds (we refer the reader to \citet[Section 5]{boydConvexOptimization2004} for more details on strong duality, constraint qualification and optimality conditions). If $y=\Lambda(b\mathbf{1})$, then \Cref{problem_convex} has a unique admissible point, which is $b\mathbf{1}$. Otherwise, one can show that Slater's condition holds, which implies that strong duality holds. As a result, we obtain the following necessary and sufficient first-order optimality conditions.
\begin{lemma}
    Let $\sigma\in [a,b]^n$ and $y\in\RR^m$ be such that $y_j>\lambda_j(b\mathbf{1})$ for every $1\leq j \leq m$. Then $\sigma$ is a solution to \Cref{problem_convex} if and only if there exists $z\in\RR^m$ and $\lambda,\mu\in \RR^n$ such that
    \begin{equation}
        \left\{
        \begin{aligned}
            & \lambda,\mu\geq 0~\mathrm{and}~z\geq 0,\\
            & \lambda_i (\sigma_i-a)=0~\mathrm{and}~\mu_i(\sigma_i-b)=0~\mathrm{for~every}~1\leq i\leq n,\\
            & z_j(\lambda_j(\sigma)-y_j)=0~\mathrm{for~every}~1\leq j\leq m,\\
            &\textstyle c+\mu-\lambda+\sum_{j=1}^m z_j \nabla \lambda_j(\sigma)=0.
        \end{aligned}
        \right.
        \label{opt_cond}
    \end{equation}
    \label{lemma_opt_cond}
\end{lemma}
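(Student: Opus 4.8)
The plan is to recognize \eqref{problem_convex} as a convex program with linear objective and to identify system \eqref{opt_cond} as its Karush--Kuhn--Tucker (KKT) conditions. For such problems the KKT system is sufficient for global optimality by convexity alone, and becomes necessary once a constraint qualification holds; the role of the hypothesis $y_j>\lambda_j(b\mathbf{1})$ will be exactly to supply that qualification. I would begin by forming the Lagrangian, attaching nonnegative multiplier vectors $\lambda,\mu\in\RR^n$ to the lower and upper box constraints and $z\in\RR^m$ to the constraint $\Lambda(\sigma)\leq y$:
\begin{equation*}
L(\sigma,\lambda,\mu,z) = \langle c,\sigma\rangle + \sum_{i=1}^n \lambda_i(a-\sigma_i) + \sum_{i=1}^n \mu_i(\sigma_i-b) + \sum_{j=1}^m z_j\bigl(\lambda_j(\sigma)-y_j\bigr).
\end{equation*}
Differentiating in $\sigma$ yields the stationarity condition $c-\lambda+\mu+\sum_j z_j\nabla\lambda_j(\sigma)=0$, which is precisely the last line of \eqref{opt_cond}; dual feasibility $\lambda,\mu,z\geq 0$ and complementary slackness account for the remaining lines.

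Next I would dispatch the sufficiency direction, which requires no hypothesis beyond convexity. The objective is linear, the box constraints are affine, and each $\lambda_j$ is convex by Proposition~\ref{prop_forward_map}, so the feasible set is convex and $L(\cdot,\lambda,\mu,z)$ is a convex function of $\sigma$. Given any $(\sigma,\lambda,\mu,z)$ solving \eqref{opt_cond}, stationarity says $\sigma$ is an unconstrained minimizer of $\sigma'\mapsto L(\sigma',\lambda,\mu,z)$; since this Lagrangian lower bounds $\langle c,\sigma'\rangle$ on the feasible set and complementary slackness makes the bound tight at $\sigma$, it follows that $\sigma$ is a global minimizer of \eqref{problem_convex}.

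The necessity direction is where I would invoke strong duality for convex programs (see \citet[Section~5]{boydConvexOptimization2004}), and it is the only place the hypothesis enters. The key observation is that the only \emph{nonaffine} constraints are $\lambda_j(\sigma)\leq y_j$, so it suffices to exhibit a feasible point at which these hold with strict inequality while the affine box constraints merely hold. The point $\sigma=b\mathbf{1}$ does exactly this: it lies in $(\RR_{>0})^n$, satisfies the box constraints, and satisfies $\lambda_j(b\mathbf{1})<y_j$ for every $j$ precisely by assumption. (If one prefers to avoid the refined Slater condition that exempts affine constraints, a point $(b-\epsilon)\mathbf{1}$ in the interior of $[a,b]^n$ is strictly feasible for \emph{all} constraints for small $\epsilon>0$ by continuity of $\Lambda$.) Strong duality then guarantees that at any optimal $\sigma$ there exist multipliers making \eqref{opt_cond} hold.

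I do not expect a deep obstacle here, as the statement is a direct application of convex duality; the only points demanding care are bookkeeping ones. One is the sign conventions in the Lagrangian, which must be tracked so that the stationarity equation reproduces \eqref{opt_cond} exactly. The other, more genuine subtlety is the constraint qualification: since $b\mathbf{1}$ sits on the upper face of the box, one cannot use the naive Slater condition requiring strict satisfaction of every inequality, and must either use the refined version exempting affine constraints or perturb to an interior strictly feasible point as noted above. The monotonicity of $\Lambda$ from Proposition~\ref{prop_forward_map} clarifies why $b\mathbf{1}$ is the natural candidate, as it minimizes each $\lambda_j$ over $[a,b]^n$, though only the hypothesis $y_j>\lambda_j(b\mathbf{1})$ is strictly needed.
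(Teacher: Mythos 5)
Your proposal is correct and follows essentially the same route as the paper: the paper's proof likewise reduces everything to verifying Slater's condition at the perturbed point $(b-\delta)\mathbf{1}$ using the continuity of $\Lambda$, and then cites convexity for the equivalence with the KKT system \eqref{opt_cond}. Your write-up merely makes explicit the Lagrangian bookkeeping and the sufficiency-from-convexity step that the paper leaves implicit.
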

\begin{proof}
    Since $\Lambda$ is continuous, there exists $\delta>0$ such that, for every $\sigma\in(\RR_{>0})^n$ with $\|\sigma-b\mathbf{1}\|_{\infty}\leq \delta$, it holds 
    $$\lambda_j(\sigma)-\lambda_j(b\mathbf{1})\leq [y_j-\lambda_j(b\mathbf{1})]/2,\qquad 1\leq j\leq m.$$
    Taking $\sigma=(b-\delta)\mathbf{1}$, we obtain for every $1\leq j\leq m$ that 
    \begin{equation*}
        \begin{aligned}
            y_j-\lambda_j(\sigma)&=y_j-\lambda_j(b\mathbf{1})+\lambda_j(b\mathbf{1})-\lambda_j(\sigma)\\
            &\geq (y_j-\lambda_j(b\mathbf{1}))/2\\
            &>0.
        \end{aligned}
    \end{equation*}
    As a result, we have proved the existence of $\sigma\in (a,b)^n$ such that $\lambda_j(\sigma)<y_j$ for every $1\leq j\leq m$, which shows that Slater's condition holds. The necessary and sufficient optimality condition then follows from the convexity of $\Lambda$.
\end{proof}

\subsubsection{Estimation of the weight vector}
\label{subsec_c_estim}
We propose to estimate $c$ by exploiting the optimality conditions \eqref{opt_cond}. To be more precise, we fix $n_{\sigma}$ a set of conductivities $\{\sigma_l\}_{1\leq l\leq n_{\sigma}} \subset [a,b]^n$ and try to find a vector $c$ such that $\sigma_l$ is a solution to \Cref{problem_convex} with $y=\Lambda(\sigma_l)$ for every $1\leq l\leq n_{\sigma}$. By \Cref{lemma_opt_cond}, this is equivalent to the existence, for every $1\leq l\leq n_{\sigma}$, of $(\lambda_l,\mu_l,z_l)\in \RR^n\times\RR^n\times\RR^m$ such that \eqref{opt_cond} holds. The crucial point is that this constraint is \emph{linear} in $c$. As a result, given the problem parameters $(n,m,a,b)$ and $\{\sigma_l\}_{1\leq l\leq n_\sigma}\subset [a,b]^n$, one can check that $c$ is such that every $\sigma_l$ is a solution to \Cref{problem_convex} with $y=\Lambda(\sigma_l)$ by solving a \emph{linear feasibility problem}, for which efficient solvers exist. If this holds and $\{\sigma_l\}_{1\leq l\leq n_\sigma}$ is a sufficiently fine discretization of $[a,b]^n$, one can moreover hope that the estimated $c$ is universal. The resulting linear feasibility problem reads as follows:
\begin{equation}
    \begin{aligned}
        \underset{\substack{c\in \RR^n\\\lambda,\mu\in\RR^{n_{\sigma}\times n}\\z\in\RR^{n_{\sigma}\times m}}}{\mathrm{max}}\quad&c_n&&\\
        \mathrm{s.t.}\quad&c\geq 0~\mathrm{and}~c_1=1,&&\\
                      &\lambda,\mu\geq 0~\mathrm{and}~z\geq 0,&&\\
                      &\lambda_{l,i}(\sigma_{l,i}-a)=0~\mathrm{and}~\mu_{l,i}(\sigma_{l,i}-b)=0,&&1\leq i\leq n,1\leq j\leq n_{\sigma},\\
                      &c_i+\mu_{l,i}-\lambda_{l,i}+\sum\limits_{j=1}^m z_{l,j}\partial_i\lambda_j(\sigma_l)=0,&&1\leq i\leq n,~1\leq l\leq n_{\sigma}.
    \end{aligned}
    \label{c_estimation_problem}
\end{equation}
The constraint $c_1=1$ allows us to circumvent the fact that solutions of \eqref{problem_convex} are unchanged by the multiplication of $c$ by a positive constant. The maximization of $c_n$ is optional (that is to say, one can simply find a feasible tuple $(c,\lambda,\mu,z)$). Maximizing $c_n$ allows to ensure that, if $\hat{c}$ is a solution to \eqref{c_estimation_problem}, then any universal vector $c$ must satisfy $c_n\leq \hat{c}_n$, as the constraints of \eqref{c_estimation_problem} is a subset of the constraints that any universal weight vector must satisfy. As a result, maximizing $c_n$ gives us an upper bound on the last coordinate of any universal weight vector.

\paragraph{Numerical evaluation.} We take $(a,b)=(0.5, 1.5)$ and assess the performance of this estimation procedure as follows. We take $\{\sigma_l\}_{1\leq l\leq n_\sigma}$ to be $(I_k)^n\setminus\{b\mathbf{1}\}$ where $I_k=\{a+(i/(k-1))(b-a):0\leq i\leq k-1\}$ and $k=5$. We solve the feasibility problem for increasing values of $m$ starting from $m=n$ until it becomes feasible. We use the package \texttt{JuMP.jl} \citep{Lubin2023} to model the feasibility problem, which allows us to try a large number of solvers seamlessly. The results of this experiment are reported in \Cref{table_c_estimation_1}. The coefficients of the estimated vectors $c$ are displayed in \Cref{fig_c_estimation_1}. The minimal number of measurements seems to grow linearly with $n$ and is significantly greater than $n$ even for small values of $n$. The smallest value of $c$ (which is always $c_n$, the value corresponding to the innermost annulus) rapidly decreases with $n$ and becomes very small even for small values of $n$. This means that the estimation of an unknown conductivity with this SDP approach will necessarily enjoy poor stability, as large variations of $\sigma_n$ will result in small variations of the objective $\langle c,\sigma\rangle$ of \Cref{problem_convex}. This phenomenon illustrates the ill-posedness of the inverse problem, which increases with the distance to the boundary and is not resolved by this convex programming approach. For $n=6$, all the linear programming solvers we have tried (HiGHS \citep{huangfuParallelizingDualRevised2018}, SCS \citep{ocpb:16}, Clarabel \citep{Clarabel_2024}, Gurobi, MOSEK) struggled to solve the problem. This could be due to the fact that $c_n$ must be close to machine precision for $n=6$. Let us stress that our approach for estimating $c$ is also limited by its computational cost. As an example, the linear program we would need to solve for $(n,m,k)=(7, 27, 5)$ involves more than three million variables with  more than one million constraints. The results of this experiment heavily depend on the choice of the a piori bounds $a$ and $b$ on the unknown conductivity. If instead of taking $(a,b)=(0.5, 1.5)$ as above we take $(a,b)=(0.75,1.25)$, the minimal number of measurements is smaller and the smallest coefficient of $c$ is larger. The results in this setting are reported in \Cref{table_c_estimation_2} and \Cref{fig_c_estimation_2}.

\begin{table}
\setlength\extrarowheight{1pt}
\begin{center}
\begin{tabular}{|l||c|c|c|c|} 
 \hline
 Number of annuli $n$ & 2 & 3 & 4 & 5\\
 \hline
 Min. num. of measurements $m$ & 3 & 7 & 13 & 20\\ 
 \hline
 Smallest coefficient of $c$ & $7.8\cdot10^{-2}$ & $1.5\cdot 10^{-3}$ & $3.6\cdot 10^{-6}$ & $1.1\cdot 10^{-10}$\\
 \hline
\end{tabular}
\end{center}
\caption{minimal number of measurements required for the existence of the vector $c$ and smallest coefficient of $c$ as a function of the number of annuli $n$ ($a=0.5$ and $b=1.5$).}
\label{table_c_estimation_1}
\end{table}

\begin{table}
\setlength\extrarowheight{1pt}
\begin{center}
\begin{tabular}{|l||c|c|c|c|c|} 
 \hline
 Number of annuli $n$ & 2 & 3 & 4 & 5 & 6\\
 \hline
 Min. num. of measurements $m$ & 2 & 5 & 8 & 13 & 19\\ 
 \hline
 Smallest coefficient of $c$ & $1.8\cdot10^{-1}$ & $2.1\cdot 10^{-2}$ & $3.3\cdot 10^{-4}$ & $1.9\cdot 10^{-6}$ & $2.8\cdot 10^{-9}$\\
 \hline
\end{tabular}
\end{center}
\caption{minimal number of measurements required for the existence of the vector $c$ and smallest coefficient of $c$ as a function of the number of annuli $n$ ($a=0.75$ and $b=1.25$).}
\label{table_c_estimation_2}
\end{table}

\begin{figure}
    \centering
    \subfloat[$(a,b)=(0.5,1.5)$]{\label{fig_c_estimation_1}\includegraphics[width=3.2in]{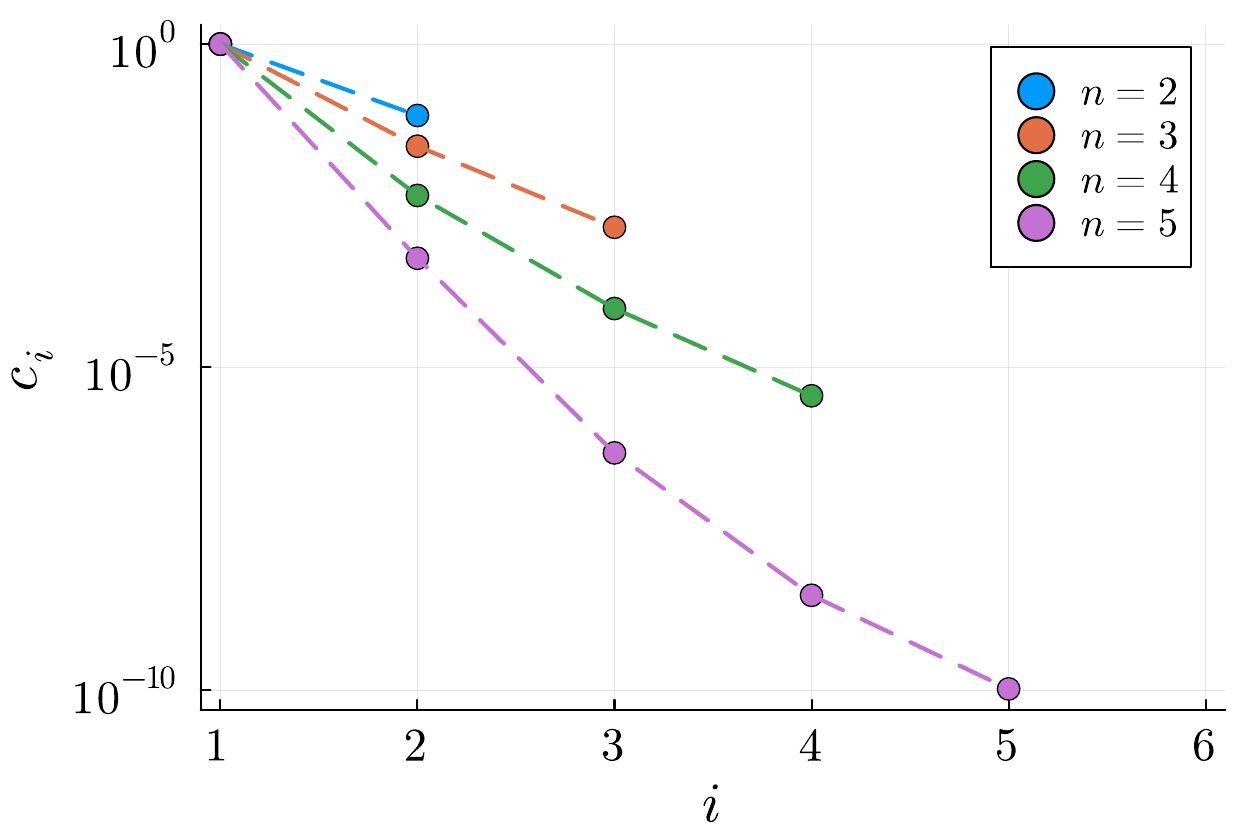}}~
    \subfloat[$(a,b)=(0.75,1.25)$]{\label{fig_c_estimation_2}\includegraphics[width=3.2in]{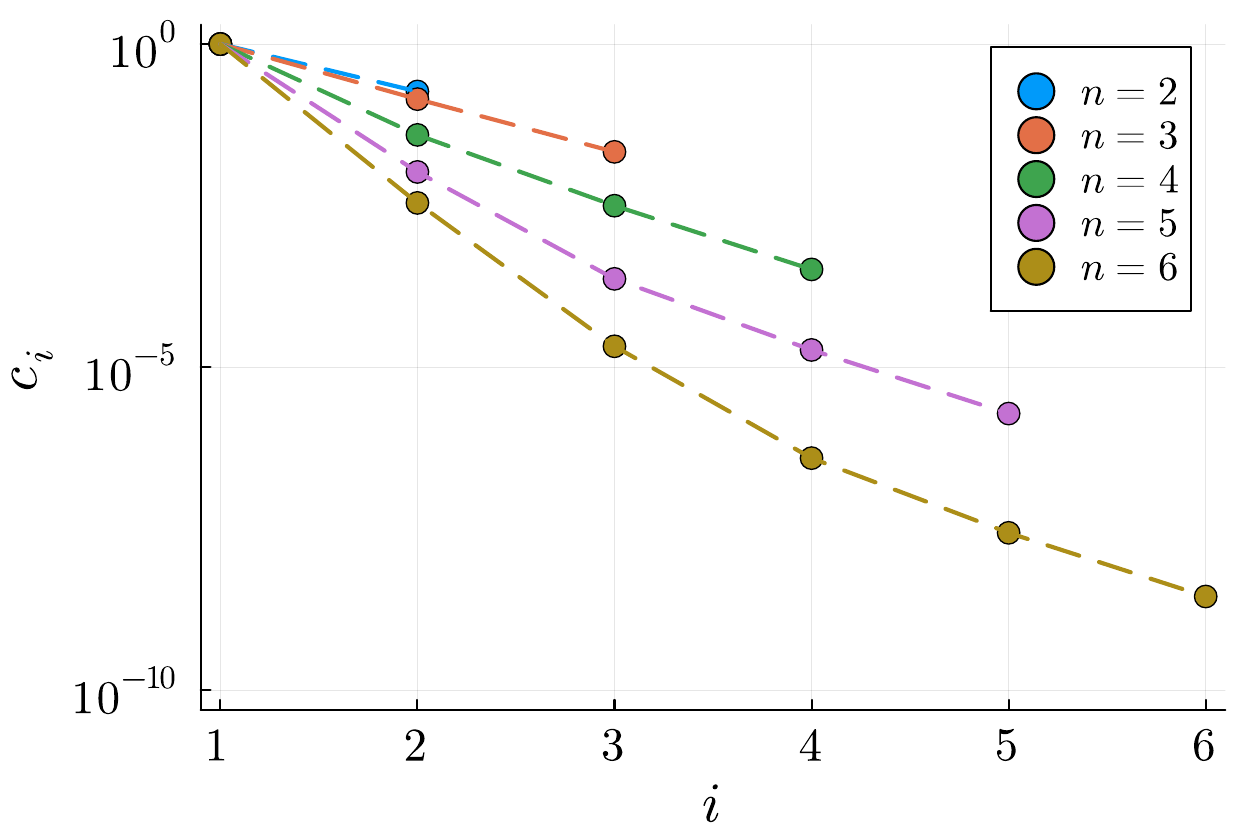}}
    \caption{coefficients of the estimated vector $c$ in two settings. For each $n$, the number of measurements $m$ is the one reported in \Cref{table_c_estimation_1} (a) or \Cref{table_c_estimation_2} (b).}
    \label{fig_c_estimation}
\end{figure}

\paragraph{Handcrafting the weight vector.} As a final experiment, we investigate the universality of several handcrafted weight vectors $c$ with a decay similar to that of the estimated one. We take $n=5$, $m=13$, $(a,b)=(0.75, 1.25)$ and draw $100$ random conductivities uniformly in $[a,b]^n$. We solve \eqref{problem_convex} with the estimated vector $c$ and the vector $c=(10^{k_i})_{1\leq i\leq n}$ with $k_i$ linearly interpolating between $k_1=0$ and $k_n$ for several value of $k_n$ (see \Cref{fig_c_guess} for a plot of the different choices of $c$). The histograms of the $\ell^{\infty}$ estimation error for the different choices of $c$ are displayed in \Cref{fig_err_c_guess}. The first two guesses, which have a faster decay than the estimated $c$, are approximately universal but yield errors that are slightly larger. On the contrary, the last two guesses, which decay slower than the estimated $c$, are not universal (they yield errors that are close to that of random guessing on several instances). Still, on the majority of instances, they allow us to obtain an estimation error which is smaller or approximately equal to the one obtained with the estimated $c$. This suggests that designing the weight vector $c$ by hand instead of relying on the estimation procedure proposed above could be difficult.

\begin{figure}
    \centering
    \includegraphics[width=0.5\linewidth]{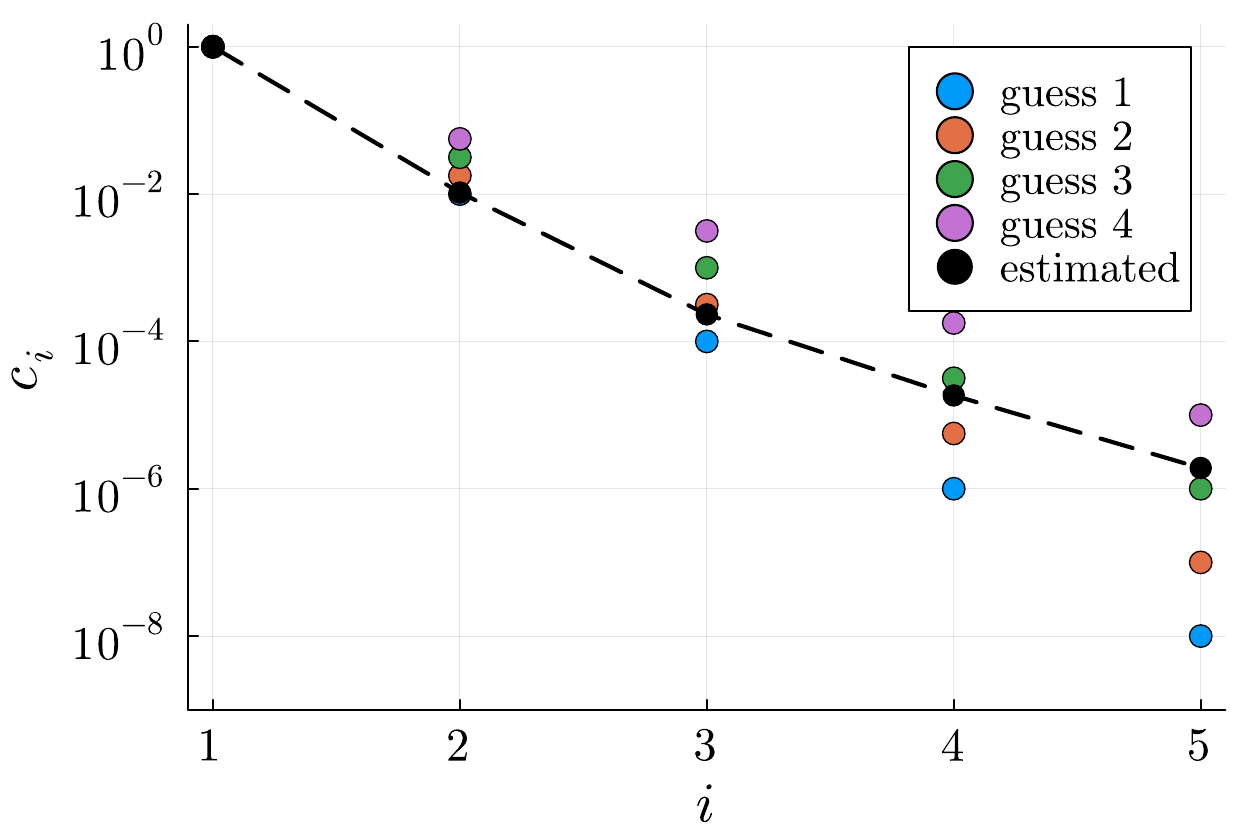}
    \caption{plot of four handcrafted vectors $c$ along with the estimated one for $n=5$, $m=13$ and ${(a,b)=(0.75, 1.25)}$.}
    \label{fig_c_guess}
\end{figure}

\begin{figure}
\centering
\subfloat[estimated]{\includegraphics[width = 2.0in]{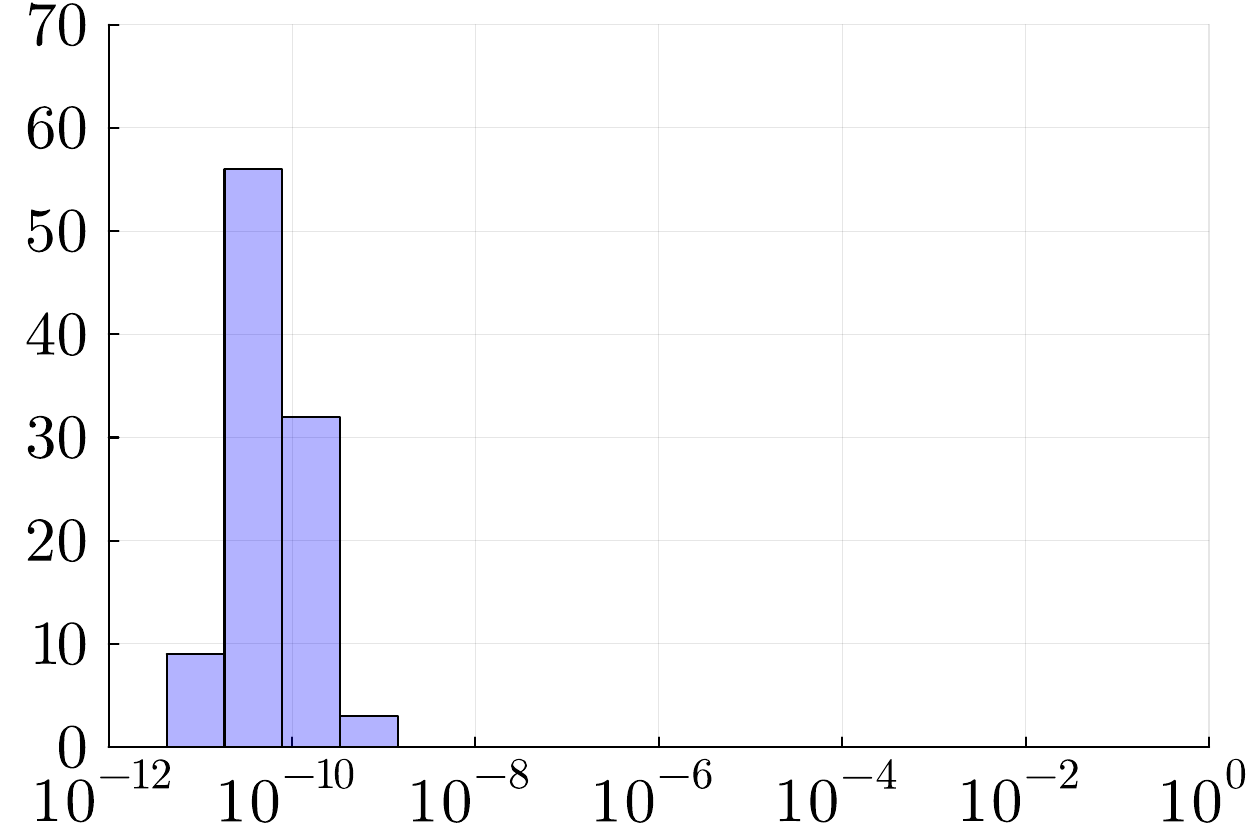}}\quad
\subfloat[guess 1]{\includegraphics[width = 2.0in]{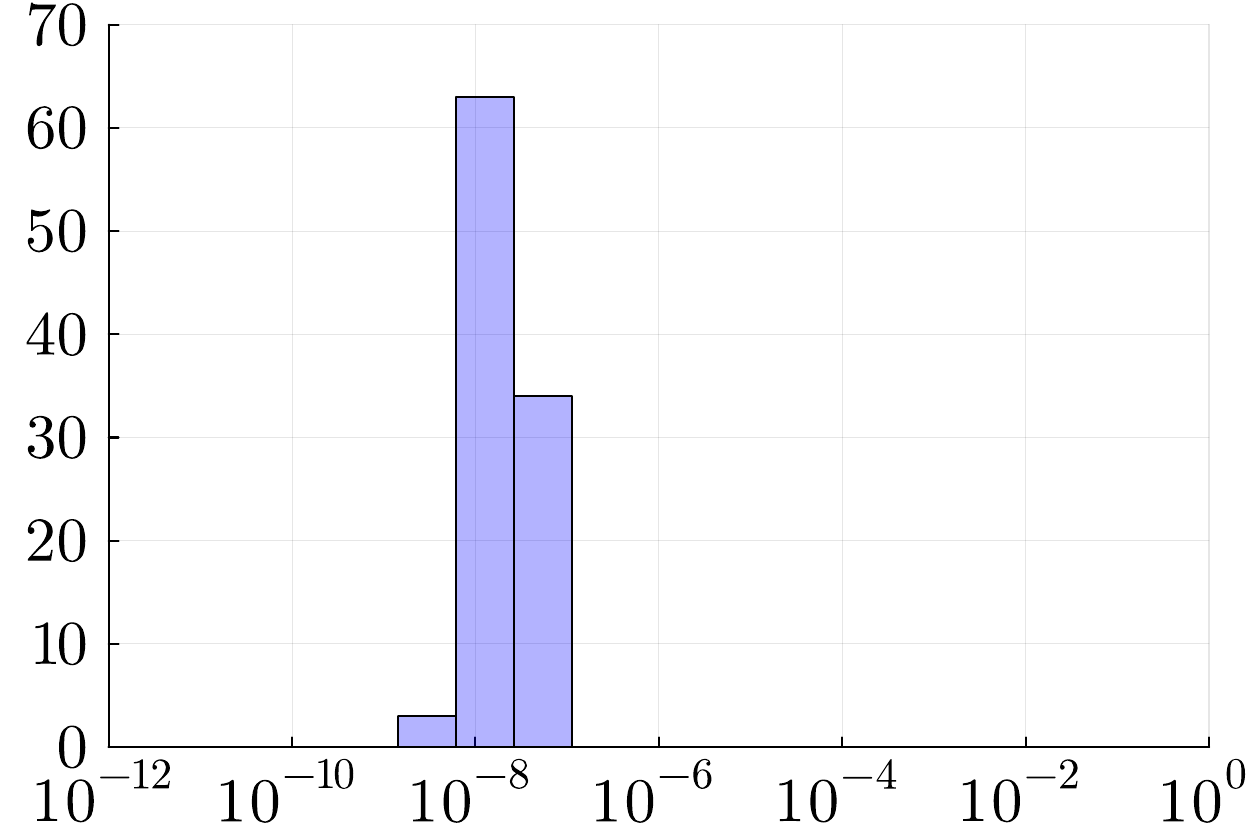}}\quad
\subfloat[guess 2]{\includegraphics[width = 2.0in]{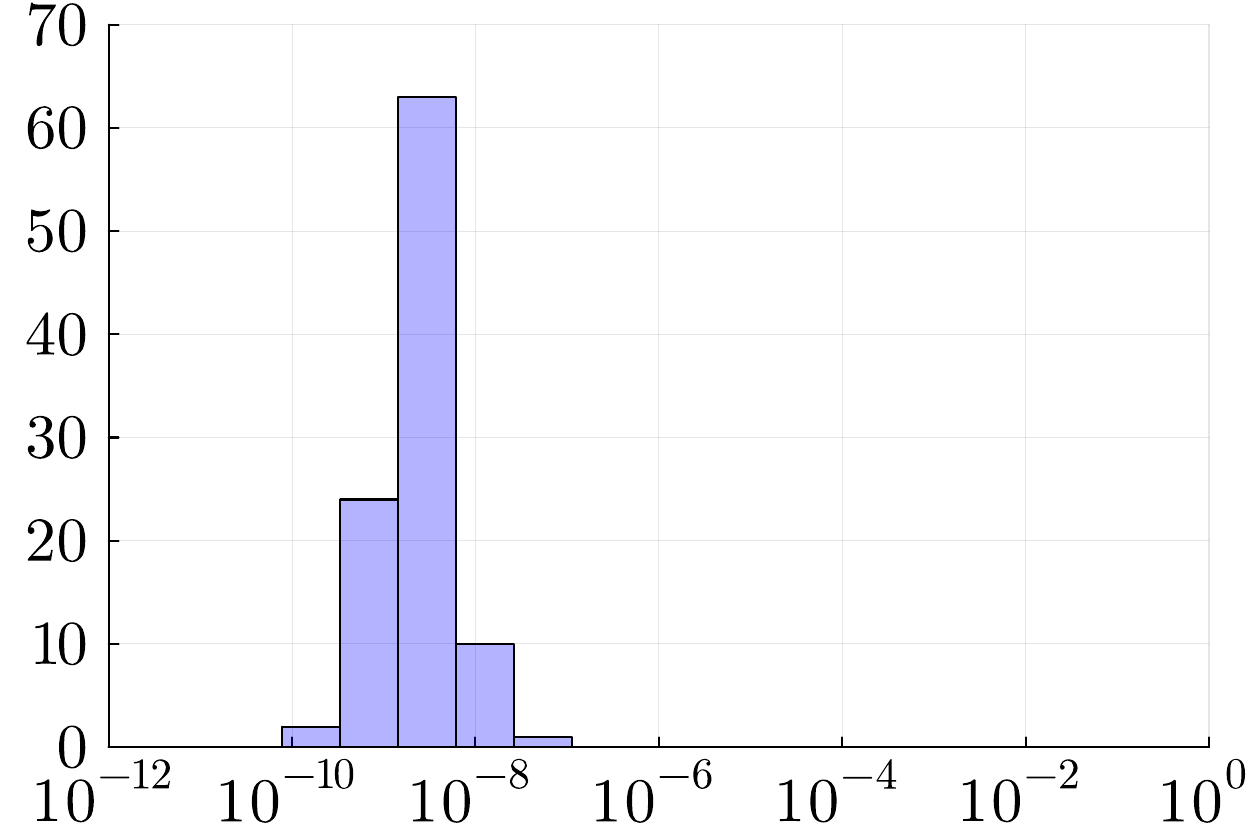}}\\
\hspace{5.4cm}\subfloat[guess 3]{\includegraphics[width = 2.0in]{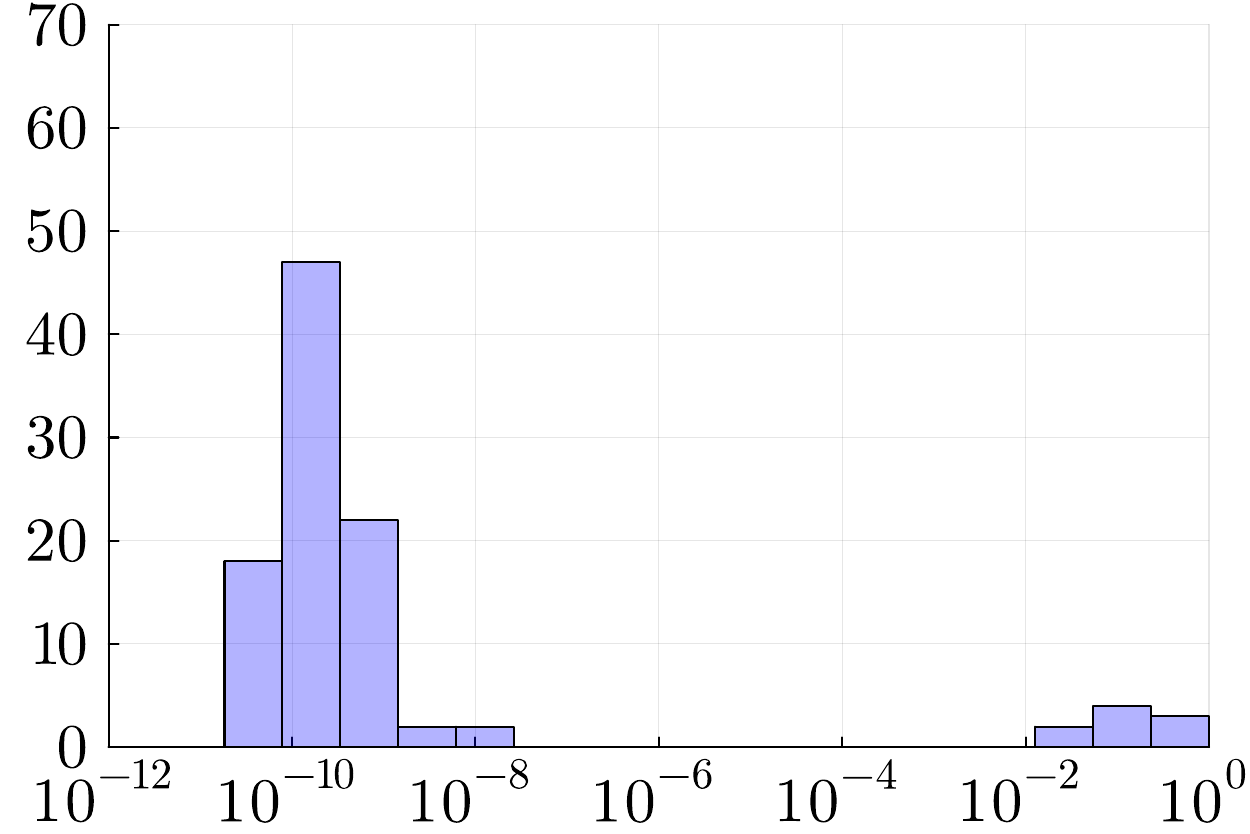}}\quad
\subfloat[guess 4]{\includegraphics[width = 2.0in]{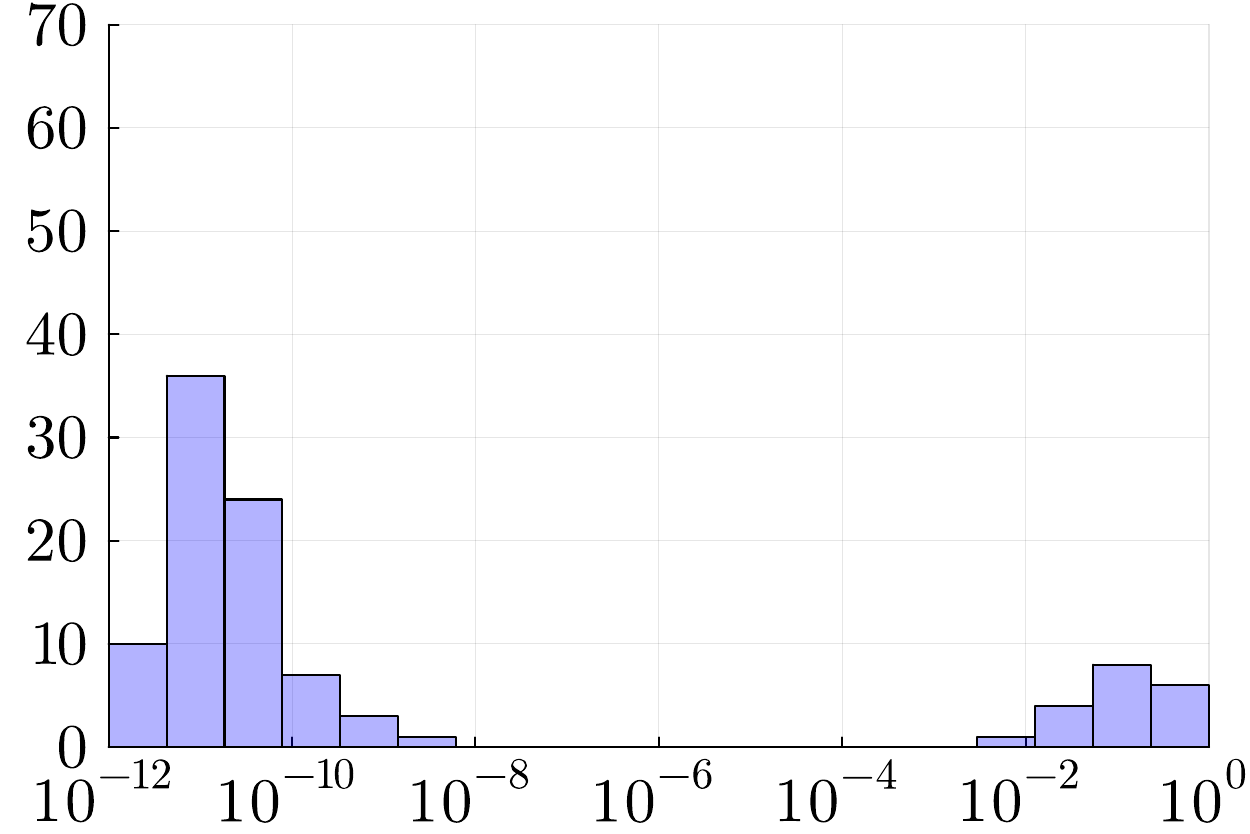}}
\caption{histograms of the $\ell^{\infty}$ estimation error for $n=5$, $m=13$ and $(a,b)=(0.75, 1.25)$ with different choices of $c$ (see \Cref{fig_c_guess}).}
\label{fig_err_c_guess}
\end{figure}

\subsubsection{Numerical resolution of the convex nonlinear semidefinite program}
Using the proposed procedure to estimate a universal weight vector $c$, the resolution of the convex nonlinear semidefinite problem \eqref{problem_convex} can be carried out using standard optimization solvers. Since the first and second order derivatives of the forward map $\Lambda$ can be efficiently computed via automatic differentiation, we propose to solve \eqref{problem_convex} by relying on interior point methods. These algorithms consist in solving \eqref{problem_convex} by applying Newton's method to a sequence of equality constrained problems. We refer the reader to \citet[Chapter 11]{boydConvexOptimization2004} for more details on this family of algorithms. In practice, we use the package \texttt{Optimization.jl}\footnote{\href{https://github.com/SciML/Optimization.jl}{https://github.com/SciML/Optimization.jl}} to model \eqref{problem_convex} and solve it with the Ipopt solver \citep{wachterImplementationInteriorpointFilter2006}.

\section{Numerical comparison of several reconstruction methods}
\label{sec_num_comparison}
In this section, we perform a comparison of various reconstruction methods, both in the case of noiseless and noisy measurements.
\subsection{Comparison of the convex approach with least squares}
In this subsection, we compare the convex nonlinear SDP approach of \cite{harrachCalderonProblemFinitely2023} to the classical least squares approach. In agreement with the results of \Cref{sec_theory}, we find that the latter does not suffer from the problem of local convergence, and that, in addition to requiring less measurements and being faster, it always outperforms the convex nonlinear SDP approach. In what follows, we use Powell's dog leg method to solve the least squares problem. This trust-region method combines the steepest and the Gauss-Newton descent directions, and performed best among all the solvers we tested.

\paragraph{Noiseless setting.} To assess this, we take $(a,b)=(0.5, 1.5)$ and $n\in\{2,3,4,5\}$. For each value of $n$ we take $m$ to be the value reported in \Cref{table_c_estimation_1}. Then, we draw $1000$ unknown conductivities $\sigma^\dagger$ uniformly at random in $[a,b]^n$. On each instance, we compare two methods for estimating $\sigma^\dagger$:
\begin{enumerate}
    \item using the least squares solver initialized uniformly at random in $[a,b]^n$ to find an approximate solution of ${\Lambda(\sigma)=\Lambda(\sigma^\dagger)}$;
    \item solving \Cref{problem_convex} with an interior point method with $y=\Lambda(\sigma^\dagger)$ and $c$ estimated as in \Cref{subsec_c_estim}.
\end{enumerate}
The results of this experiment are displayed in \Cref{fig_sdp_rootfinding_noiseless}. We notice that the least squares approach consistently outperforms the SDP approach. The performance of the latter degrades much faster than the performance of the former as $n$ grows. We also stress that the least squares approach is still able to accurately recover the unknown if $m$ is taken to be equal to $n$ instead of $3$ ($n=2$), $7$ ($n=3$), $13$ ($n=4$) or $20$ ($n=5$). Finally, the least squares approach is significantly faster than the SDP approach, especially for larger values of $n$.

\paragraph{Noisy setting.} We also conduct a similar experiment in the noisy setting, where we only have access to noisy measurements $z=\Lambda(\sigma^\dagger)+w$, where $w\in\RR^m$ is an additive noise. In this case, \cite{harrachCalderonProblemFinitely2023} proposes to solve \Cref{problem_convex} with $y=z+\delta\mathbf{1}$, where $\delta>0$ is an estimate of the noise level. We compare this method to the approach that consists in using the least squares solver initialized uniformly at random in $[a,b]^n$ to minimize $\sigma\mapsto\|\Lambda(\sigma)-z\|_2^2$. The experimental setting is the same as above, the coordinates of the noise being independent and uniformly distributed in $(-\delta,\delta)$ with $\delta=10^{-4}$. The results are reported in \Cref{fig_sdp_leastsquares_noisy}. The difference of the estimation errors is smaller than in the noiseless setting, but the least squares approach still consistently outperforms the SDP approach.

\begin{figure}
\centering
\subfloat[$n=2$, $m=3$]{\includegraphics[width = 2.2in]{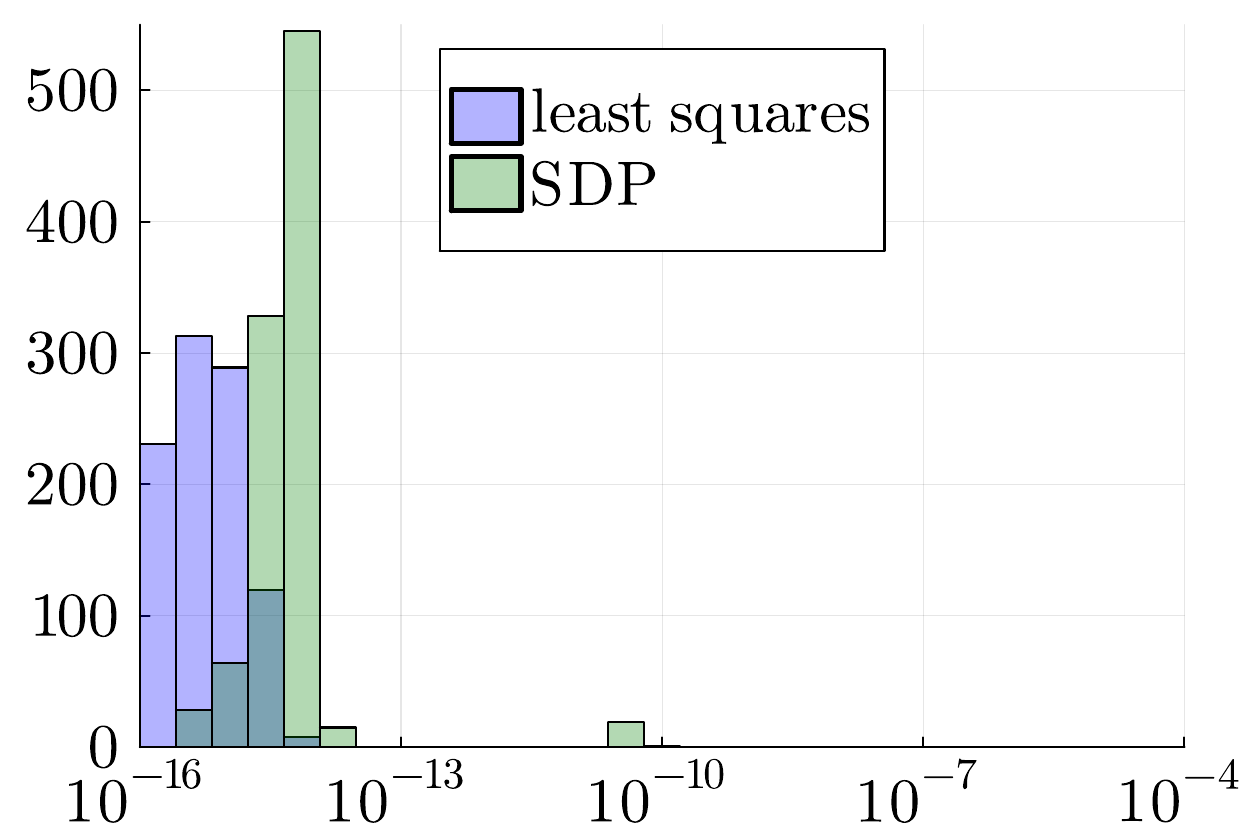}}\qquad\qquad\qquad
\subfloat[$n=3$, $m=7$]{\includegraphics[width = 2.2in]{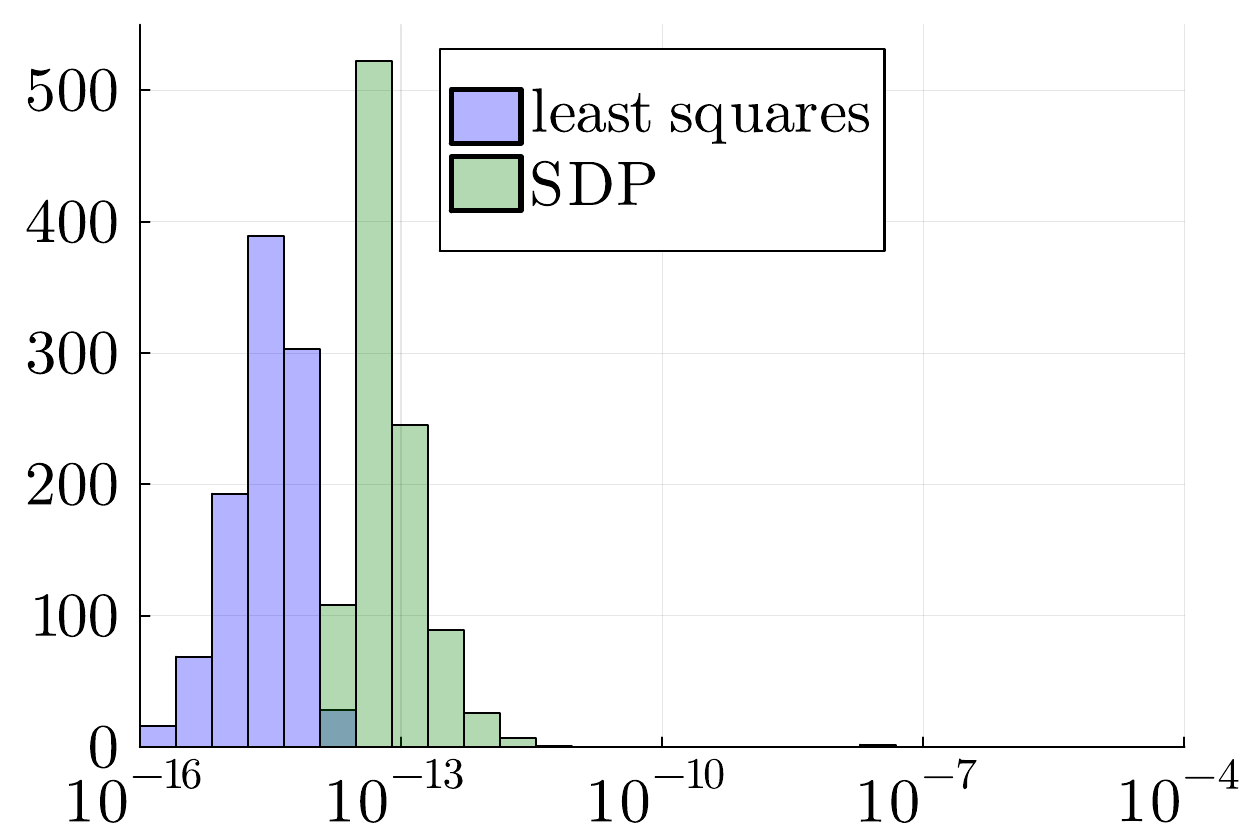}}\\
\subfloat[$n=4$, $m=13$]{\includegraphics[width = 2.2in]{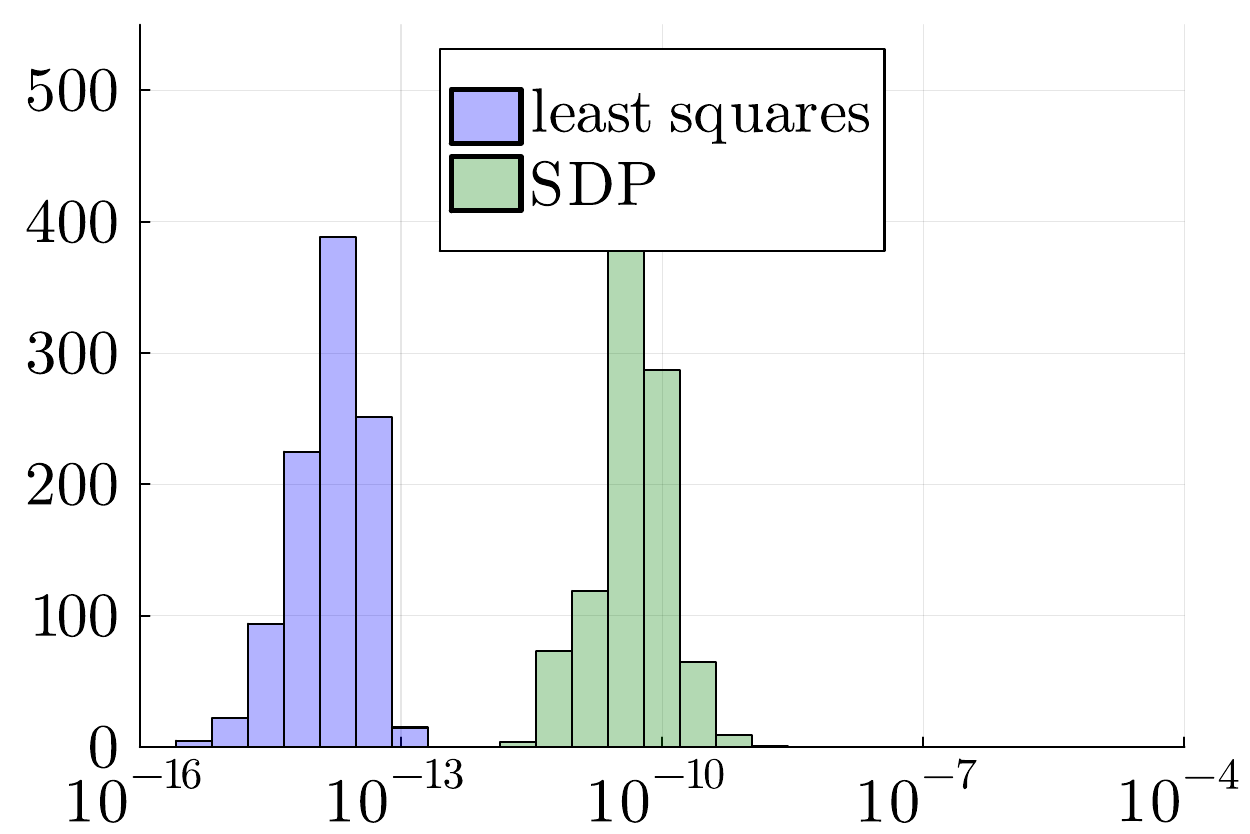}}\qquad\qquad\qquad
\subfloat[$n=5$, $m=20$]{\includegraphics[width = 2.2in]{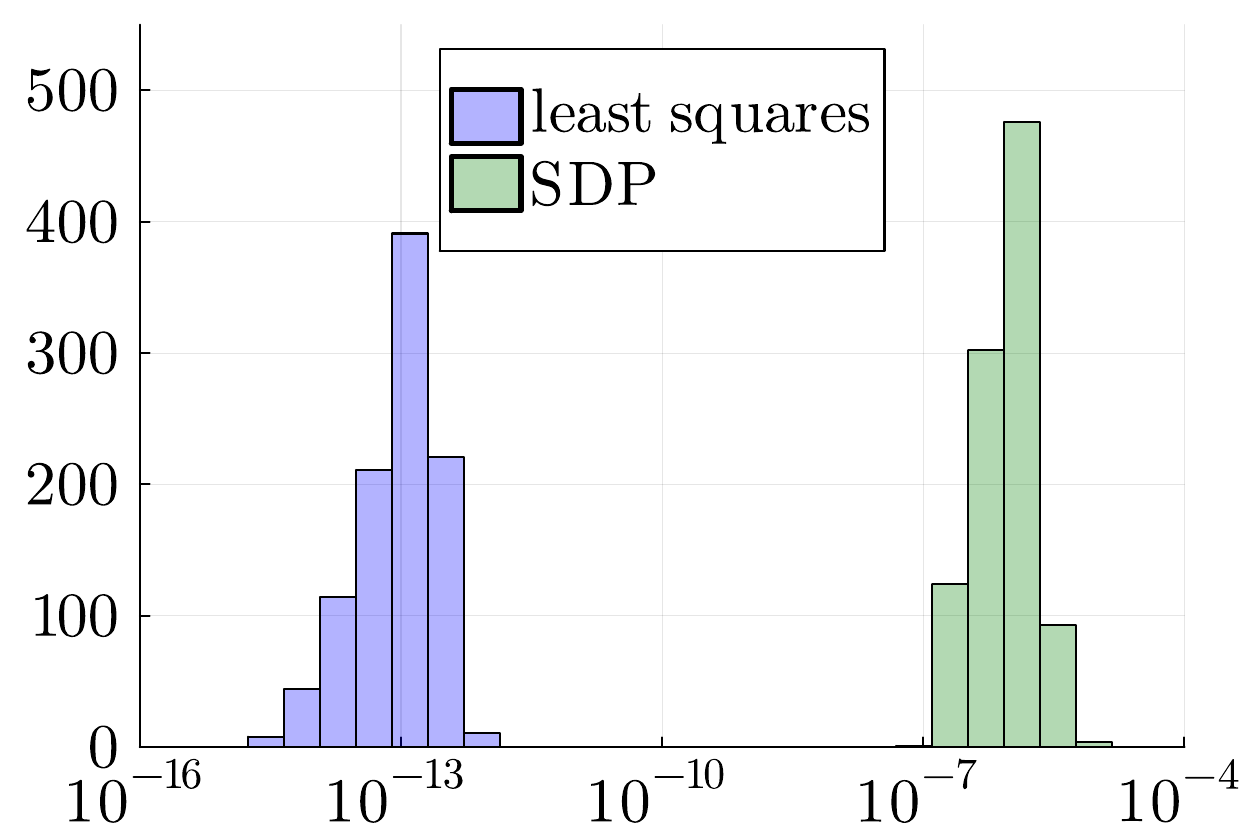}}
\caption{histograms of the $\ell^{\infty}$ estimation error of the least squares and the SDP approaches on 1000 instances with $(a,b)=(0.5, 1.5)$.}
\label{fig_sdp_rootfinding_noiseless}
\end{figure}

\begin{figure}
\centering
\subfloat[$n=2$, $m=3$]{\includegraphics[width = 2.1in]{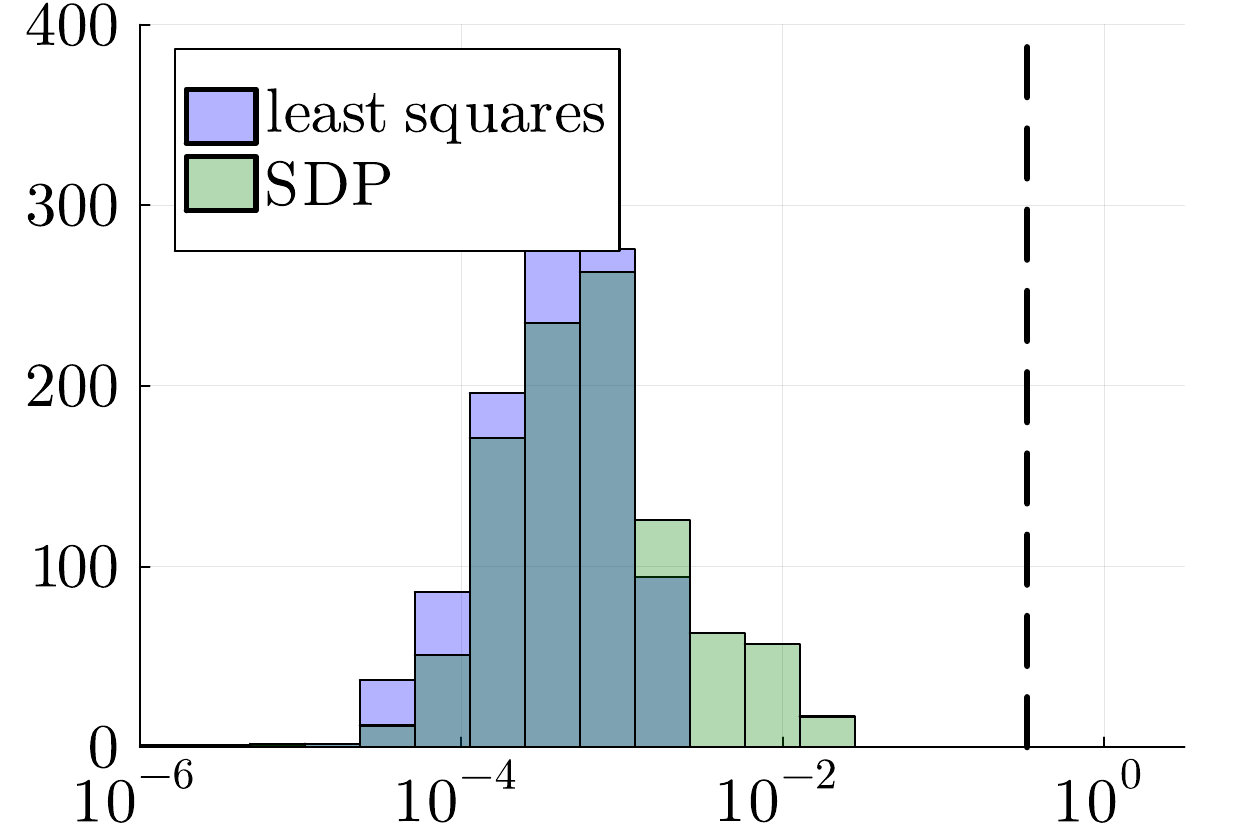}}
\subfloat[$n=3$, $m=7$]{\includegraphics[width = 2.1in]{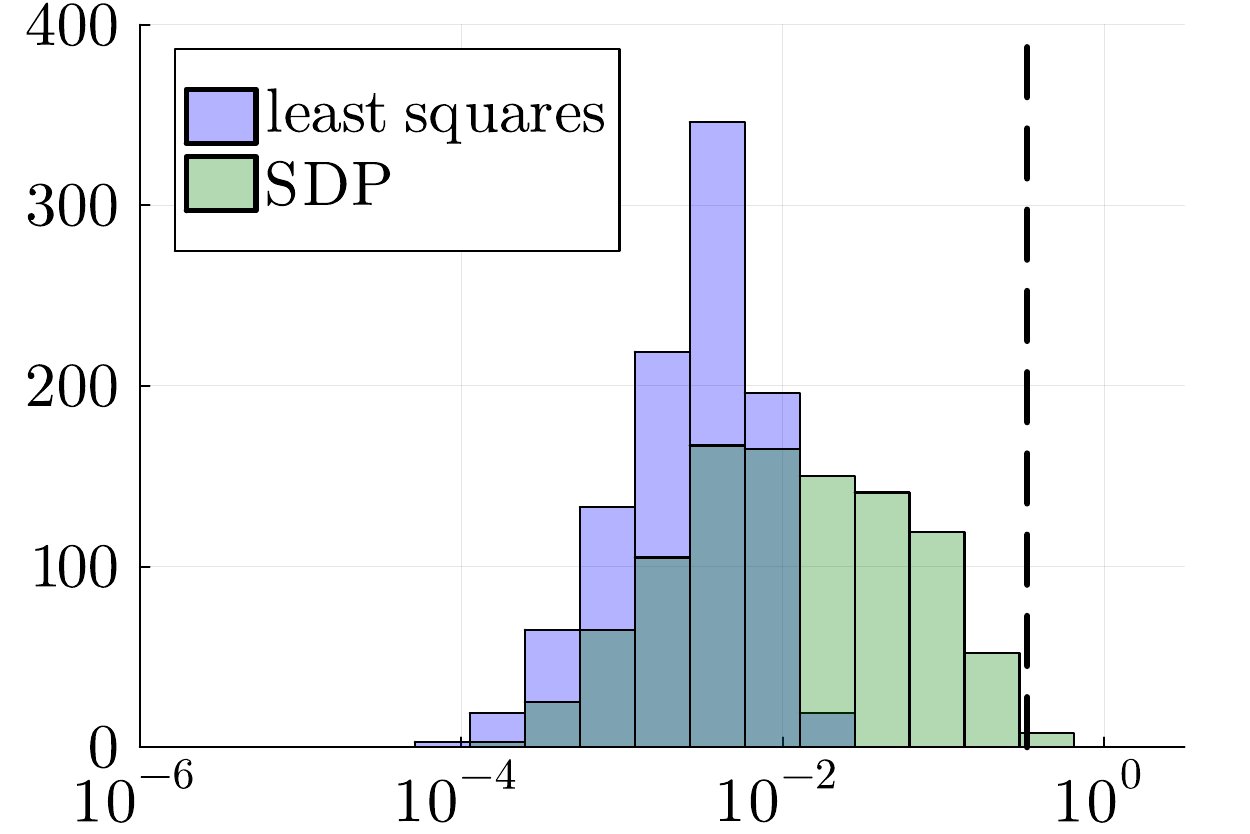}}
\subfloat[$n=4$, $m=13$]{\includegraphics[width = 2.1in]{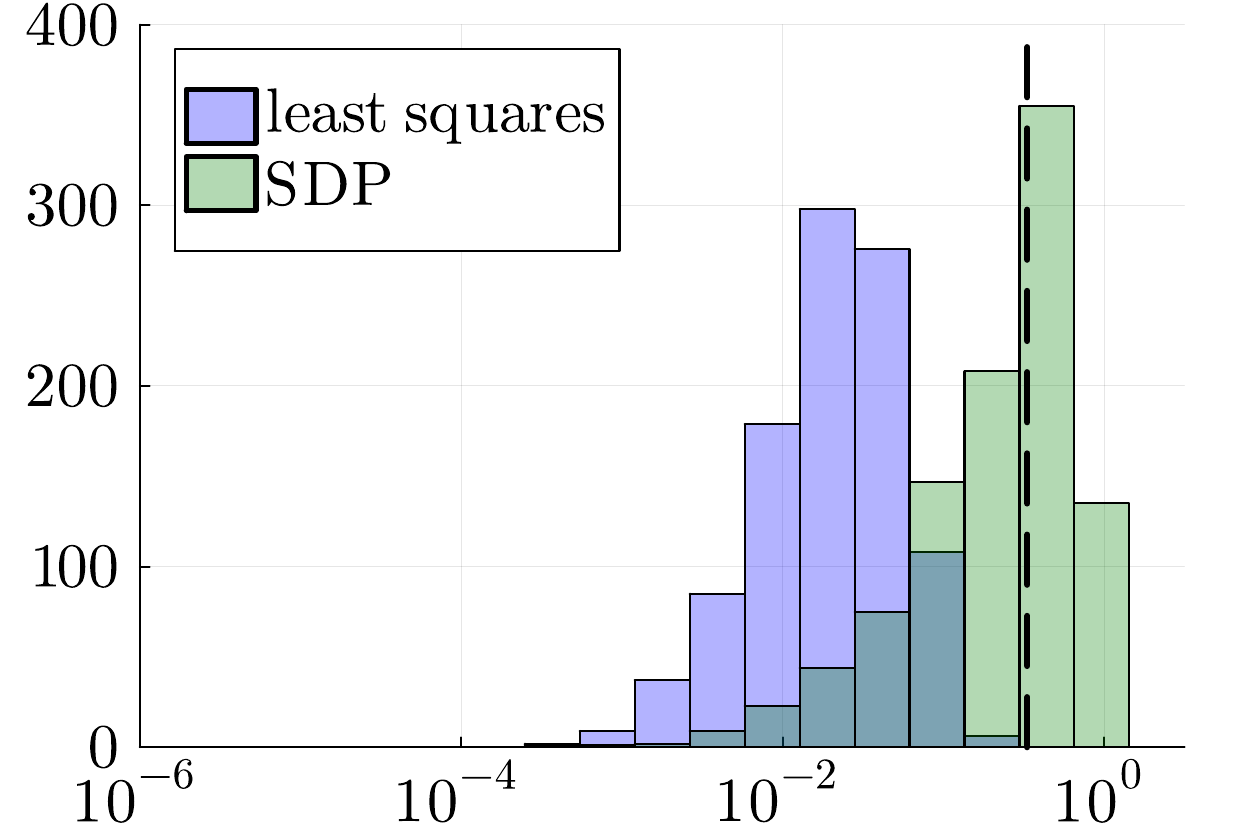}}
\caption{histograms of the $\ell^{\infty}$ estimation error of least squares and the SDP approach in the noisy case on 1000 instances with $(a,b)=(0.5, 1.5)$. The dashed black line indicates the mean estimation error of a naive random guessing strategy, which is $(b-a)/3$.}
\label{fig_sdp_leastsquares_noisy}
\end{figure}

\paragraph{Influence of the box size.} One could argue that enlarging the box size could result in more failures of the least squares approach, as it could suffer from the problem of local convergence. This does not seem to be the case. When the box is enlarged (for example taking $(a,b)=(0.1, 1.9)$), the performance of the nonlinear SDP approach significantly deterioriates: the minimal number of measurements $m$ significantly increases and the smallest value of $c$ is several orders of magnitude smaller. On the other hand, the performance of the least squares approach is almost unchanged.

\paragraph{Convergence of the least squares solver.} When $n\leq 9$, we observed that the least squares solver (Powell's dog leg method) always converges to a global minimum of the least squares objective. When $n\geq 8$, for some worst-case instances, the solver runs for a very large number of iterations, its iterates going far away from the global minimum while keeping a low objective value (the maximum number of iterations over all instances for $n\leq 7$ is $22$ and the histograms of the number of iterations for $n\in\{8,9,10\}$ are given in \Cref{fig_hist_iter}). After a large number of iterations, the iterates finally converge to the true conductivity. This behavior is showcased in \Cref{fig_conv_ls} for a specific instance with $n=m=9$. When $n=10$, the solver fails to converge on very few instances (of the order of $1$ out of $100$), after running for several million iterations. The gradient of the objective at the last iterate is non-zero, but its coordinate corresponding to the innermost conductivity is close to machine precision. Our experiments suggest that allowing for more iterations would lead to convergence. Moreover, this phenomenon is likely caused by the ill-posedness of the inverse problem, which allows conductivities that differ by several orders of magnitude to yield measurements that are close. We also stress that convergence failures are never caused by the existence of spurious critical points. Finally, let us recall that, for $n \geq 6$, the SDP approach becomes infeasible, since the the smallest coordinate of $c$ rapidly reaches machine precision as $n$ increases.

\begin{figure}
\centering
\subfloat[$n=8$]{\includegraphics[width = 2.1in]{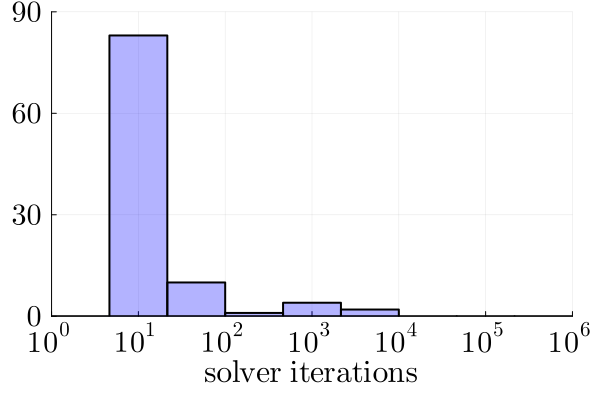}}~
\subfloat[$n=9$]{\includegraphics[width = 2.1in]{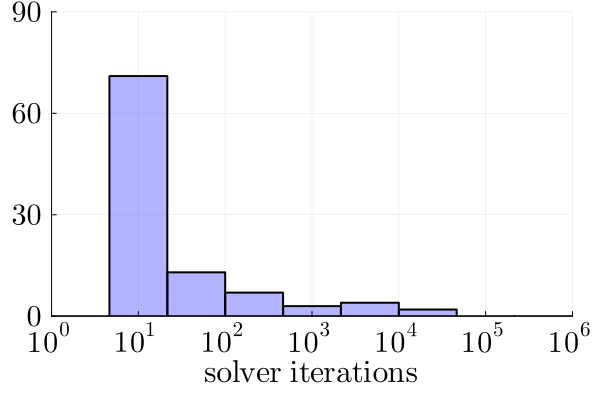}}~
\subfloat[$n=10$]{\includegraphics[width = 2.1in]{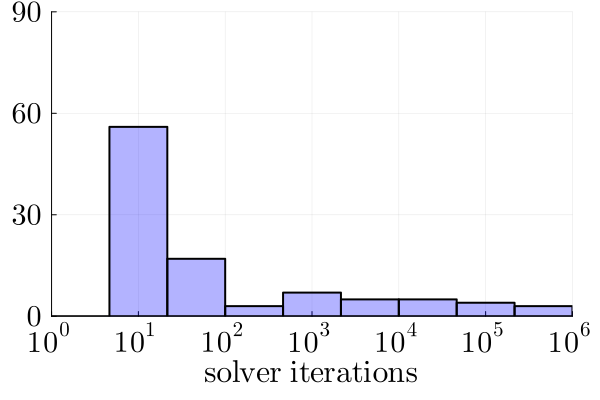}}
\caption{histograms of the number of solver iterations for $n\in\{8,9,10\}$. As $n$ increases, the number of instances for which the solver runs for a large number of iterations increases. Still, in most cases, a solution is found in about ten iterations.}
\label{fig_hist_iter}
\end{figure}

\begin{figure}
\centering
\includegraphics[width=.5\textwidth]{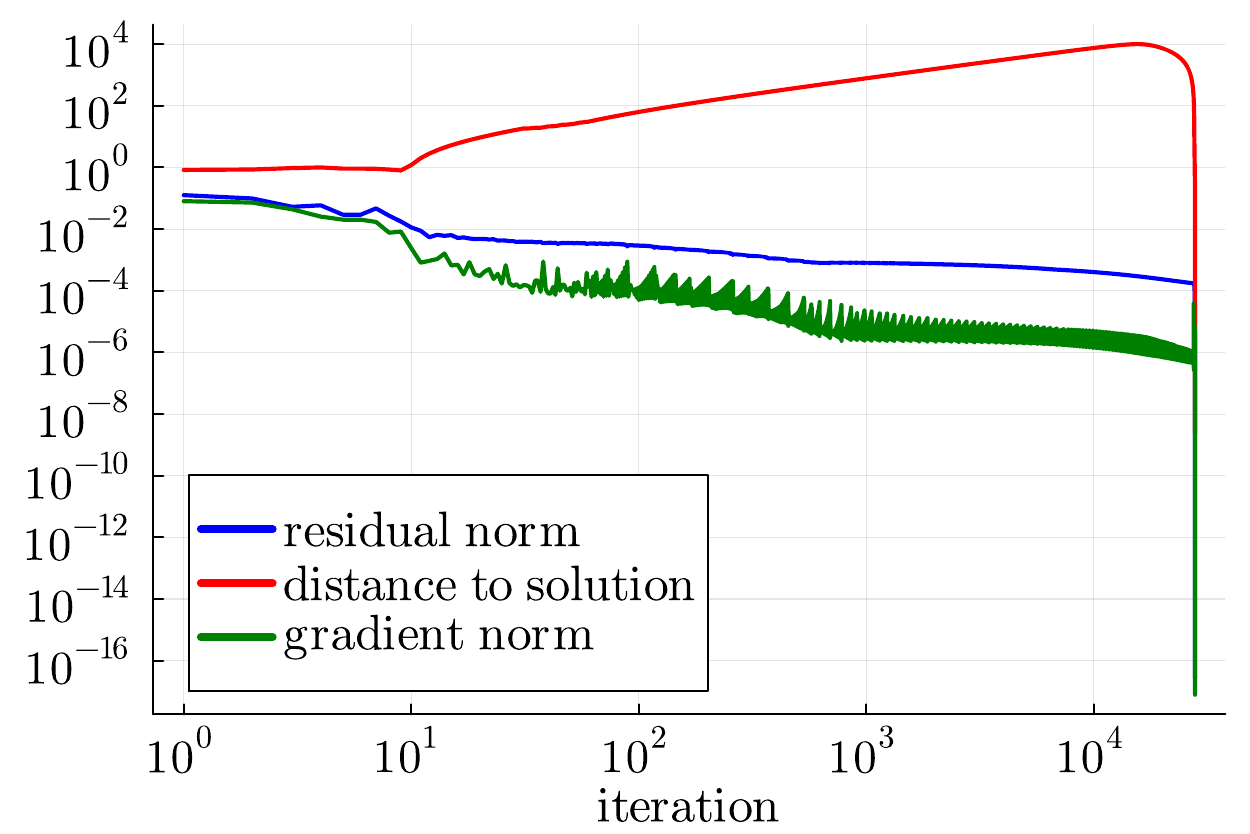}
\caption{convergence of the least squares solver (Powell's dog leg method) after a large number of iterations for an instance with $n=m=9$. The iterates reach a maximum distance to the unknown of $10^4$ while keeping a small value for the residual $\|\Lambda(\sigma)-\Lambda(\sigma^\dagger)\|_{\infty}$ (of the order of $10^{-4}$). Eventually, we reach a regime where the iterates quickly converge towards the true solution.}
\label{fig_conv_ls}
\end{figure}

\subsection{Comparison of least squares and Tikhonov regularization}
In this subsection, we investigate the impact of adding a squared $\ell^2$ norm penalization to the least squares objective (Tikhonov regularization). Namely, we solve
\begin{equation}
    \underset{\sigma\in[a,b]^n}{\mathrm{min}}~\|\Lambda(\sigma)-y\|_2^2+\lambda\|\sigma-[(a+b)/2]\mathbf{1}\|_2^2
    \tag{$\mathcal{Q}_{\lambda}(y)$}
    \label{tikhonov}
\end{equation}
for several values of $\lambda$. We stress that \eqref{tikhonov} is a non-convex optimization problem who could have spurious local minima. We are mainly interested in understanding whether the introduction of the regularization term could introduce additional spurious local minima.

We draw a large number of random unknown conductivities $\sigma^\dagger$ uniformly in $[a,b]^n$. Then, for each unknown conductivity, we draw a random guess uniformly in $[a,b]^n$ and run a trust-region algorithm initialized with this guess on \eqref{tikhonov} for several values of $\lambda$. The least squares estimate $\hat{\sigma}_{\mathrm{ls}}$ is defined as the estimated conducitivity obtained for $\lambda=0$ and the Tikhonov estimate $\hat{\sigma}_{\mathrm{tk}}$ as the estimated conductivity yielding the smallest $\ell^\infty$ estimation error among all tested positive values of $\lambda$.\footnote{This scenario favors Tikhonov regularization as the true $\ell^\infty$ estimation error would not be available in a practical scenario, making the choice of $\lambda$ more difficult.}

We take $n=3$ and $m=5$, as well as $(a,b)=(0.5, 1.5)$. The number of unknown conductivities is $100$ and the noise is a centered Gaussian with standard deviation $10^{-3}$. We consider values of $\lambda$ in the set $(10^{\alpha_i})_{1\leq i\leq 20}$ with $(\alpha_i)_{1\leq i\leq 20}$ a uniform discretization of $(-7,-2)$. The results of this experiment are displayed in \Cref{tik_ls}.  We notice that, on average, Tikhonov regularization yields a lower estimation error than least squares. Yet, for about half the instances, least squares performs slightly better and the optimal $\lambda$ is the smallest tested value (which is $10^{-7}$). Athough the addition of the squared $\ell^2$ norm term could in principle add new spurious local minima to the objective, we still observe that it is on average beneficial.

If the unknowns are small perturbations of the constant conductivity $[(a+b)/2]\mathbf{1}$, then Tikhonov regularization is even more beneficial. This is evidenced in \Cref{tik_ls_bis}, which displays the results of the same experiment as above, but with the unknowns being drawn uniformly at random in $[(a+b)/2 - \epsilon, (a+b)/2 + \epsilon]$ with $\epsilon = (b-a)/20$.

\begin{figure}
\centering
\subfloat[estimation error]{\includegraphics[width = 2.15in]{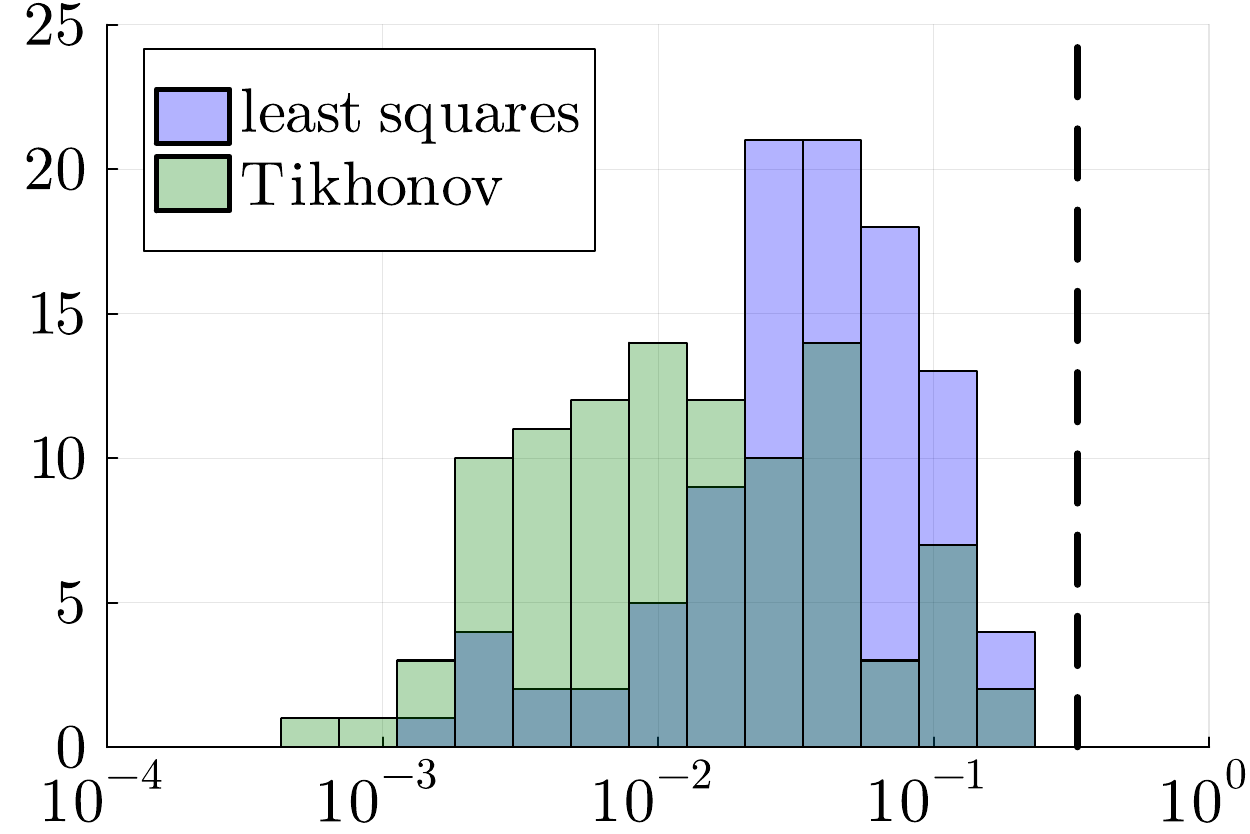}}
\subfloat[diff. of the estimation errors]{\includegraphics[width = 2.15in]{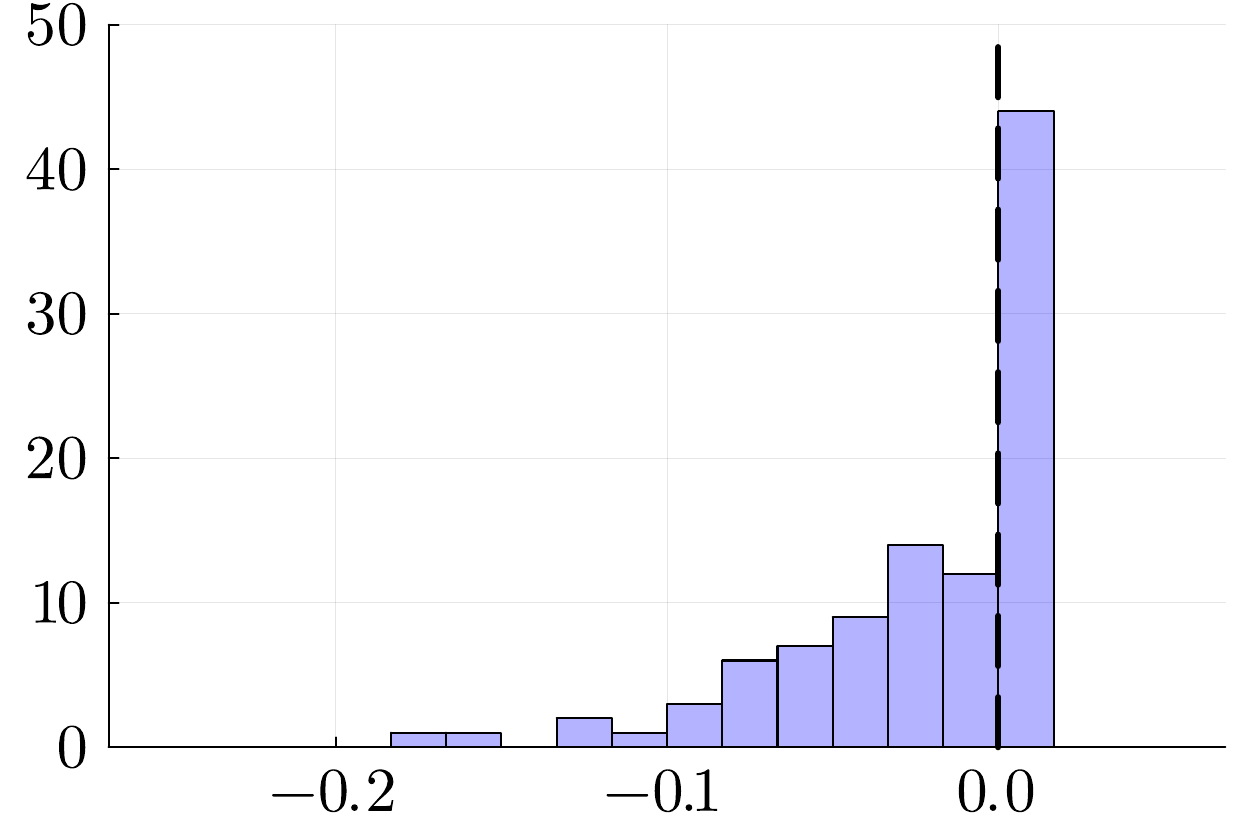}}
\subfloat[optimal $\lambda$]{\includegraphics[width = 2.15in]{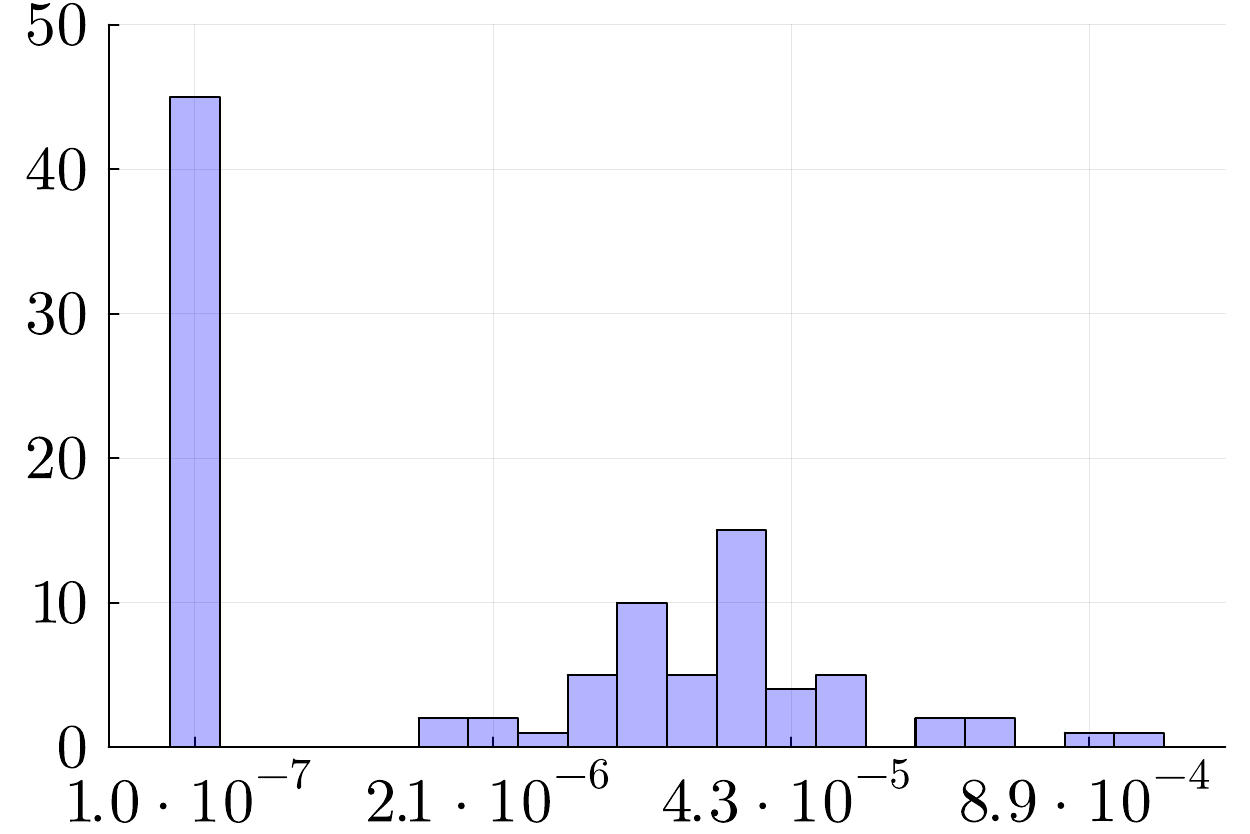}}
\caption{histograms of the estimation error $\|\hat{\sigma}-\sigma^\dagger\|_{\infty}$, of the difference $\|\hat{\sigma}_{\mathrm{tk}}-\sigma^\dagger\|_{\infty}-\|\hat{\sigma}_{\mathrm{ls}}-\sigma^\dagger\|_{\infty}$ of the Tikhonov estimation error and the least squares estimation error, and of the value of $\lambda$ yielding the smallest estimation error ($n=3$, $m=5$ and $(a,b)=(0.5, 1.5)$). The dashed black line in (a) corresponds to the average estimation error of a random guess (that is $(b-a)/3$).}
\label{tik_ls}
\end{figure}

\begin{figure}
\centering
\subfloat[estimation error]{\includegraphics[width = 2.15in]{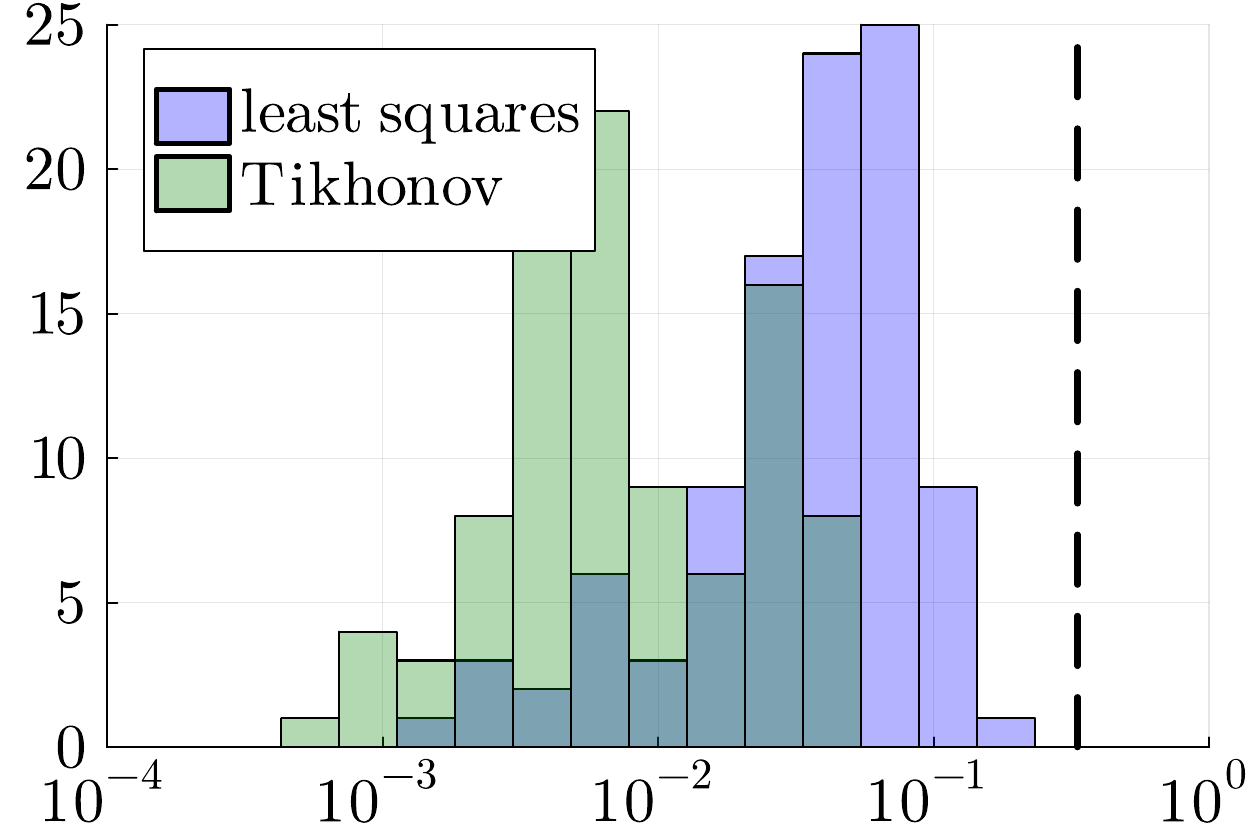}}
\subfloat[diff. of the estimation errors]{\includegraphics[width = 2.15in]{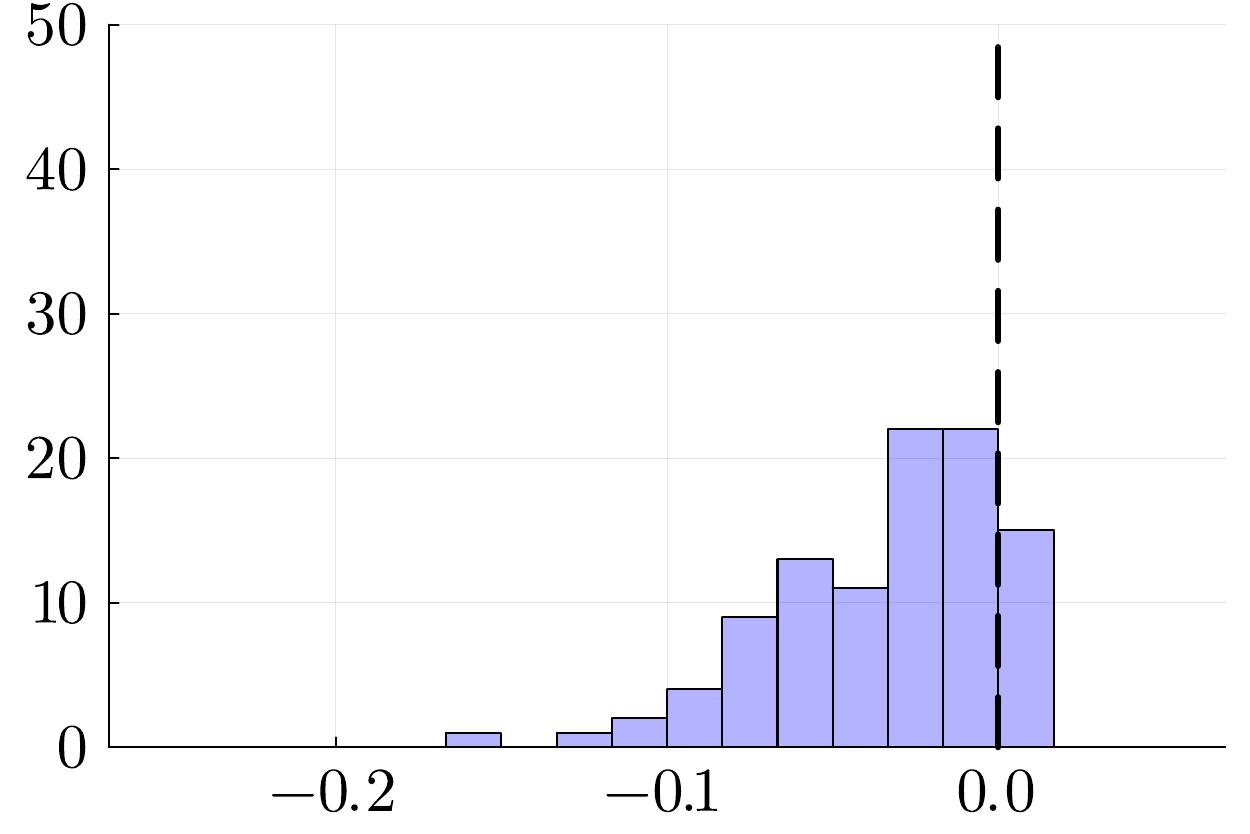}}
\subfloat[optimal $\lambda$]{\includegraphics[width = 2.15in]{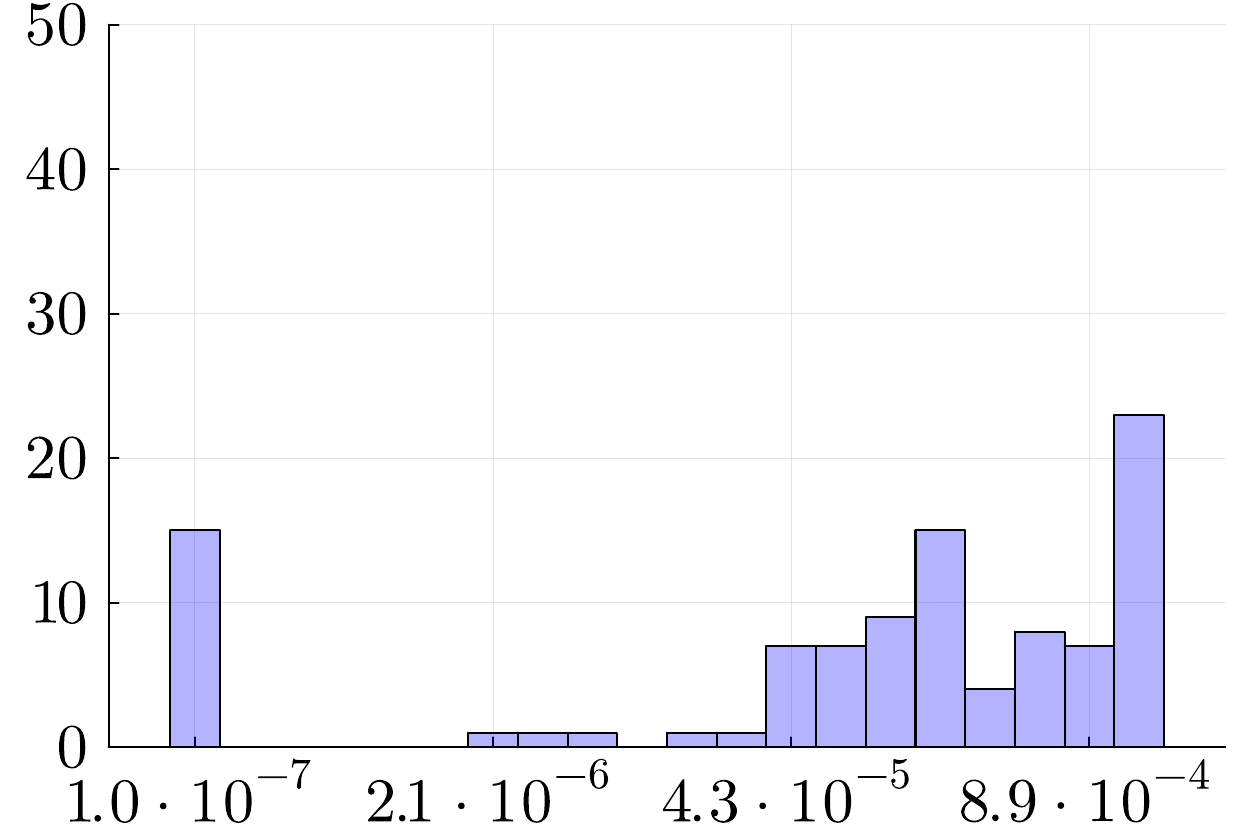}}
\caption{same as \Cref{tik_ls}, with unknowns being small perturbations of the constant conductivitiy ${[(a+b)/2]\mathbf{1}}$.}
\label{tik_ls_bis}
\end{figure}

\section{Conclusion}\label{sec:conclusion}
We carried out an extensive numerical investigation of the least squares approach in the context of the piecewise constant radial Calder\'on problem. Contrary to previous claims, this approach does not seem to suffer from the presence of spurious local minimums and we observed a good convergence behavior even in the presence of noise. Furthermore, we observed numerically that the Jacobian of the forward map has always full rank. Under this assumption {and an additional monotonicity condition (both rigorously verified in the case of $n=2$ unknowns)} we have proved that exact recovery is achieved whenever the number of measurements $m$ is at least the number of unknowns $n$. Under the same assumptions, we have showed that, when $m=n$, all critical points of the least squares objective are global minimums.

Our results suggest that the optimization landscape of the least squares objective warrants further investigations. An interesting and challenging future direction is the study of this landscape in the noisy regularized setting. The codebase we developed might also serve as a useful benchmark for further studies on this topic.

\section*{Acknowledgments}
G.S.A.\ is supported by the Air Force Office of Scientific Research under award number FA8655-23-1-7083 and by the European Union (ERC, SAMPDE, 101041040). Views and opinions expressed are however those of the authors only and do not necessarily reflect those of the European Union or the European Research Council. Neither the European Union nor the granting authority can be held responsible for them. G.S.A.\ is a member of the ``Gruppo Nazionale per l’Analisi Matematica, la Probabilità e le loro Applicazioni'', of the ``Istituto Nazionale di Alta Matematica''. The research was supported in part by the MIUR Excellence Department Project awarded to Dipartimento di Matematica, Università di Genova, CUP D33C23001110001. Co-funded by European Union – Next Generation EU, Missione 4 Componente 1 CUP D53D23005770006 and CUP D53D23016180001. The work of R.P. was supported by the European Union (ERC, SAMPDE, 101041040 and ERC, WOLF, 101141361).
G.S.A. and C.P. acknowledge  support from a Royal Society International Exchanges award. 

\bibliography{ref}

@article{klibanov2025convexification,
  title={Convexification with the viscocity term for electrical impedance tomography},
  author={Klibanov, Michael V and Li, Jingzhi and Yang, Zhipeng},
  journal={arXiv preprint arXiv:2503.07916},
  year={2025}
}

@article{albertiInfiniteDimensionalInverseProblems2022,
  title = {Infinite-{{Dimensional Inverse Problems}} with {{Finite Measurements}}},
  author = {Alberti, Giovanni S. and Santacesaria, Matteo},
  year = {2022},
  month = jan,
  journal = {Archive for Rational Mechanics and Analysis},
  volume = {243},
  number = {1},
  pages = {1--31},
  issn = {1432-0673},
  doi = {10.1007/s00205-021-01718-4},
  urldate = {2024-01-24},
  abstract = {We present a general framework to study uniqueness, stability and reconstruction for infinite-dimensional inverse problems when only a finite-dimensional approximation of the measurements is available. For a large class of inverse problems satisfying Lipschitz stability we show that the same estimate holds even with a finite number of measurements. We also derive a globally convergent reconstruction algorithm based on the Landweber iteration. This theory applies to nonlinear ill-posed problems such as electrical impedance tomography (EIT), inverse scattering and quantitative photoacoustic tomography (QPAT), under the assumption that the unknown belongs to a finite-dimensional subspace. In particular, we derive Lipschitz stability estimates for EIT with a matrix approximation of the Neumann-to-Dirichlet map; for the inverse scattering problem with measurements of the scattering amplitude at a finite number of directions on \$\$S{\textasciicircum}2 {\textbackslash}times S{\textasciicircum}2\$\$; and for QPAT with a low-pass filter of the internal energy.},
  langid = {english},
  keywords = {35P25,35R30,78A46,94A20},
  file = {/home/rpetit01/Zotero/storage/P2DPGIKJ/Alberti et Santacesaria - 2022 - Infinite-Dimensional Inverse Problems with Finite .pdf}
}

@article {alessandrini-scapin-2017,
    AUTHOR = {Alessandrini, Giovanni and Scapin, Andrea},
     TITLE = {Depth dependent resolution in electrical impedance tomography},
   JOURNAL = {J. Inverse Ill-Posed Probl.},
  FJOURNAL = {Journal of Inverse and Ill-Posed Problems},
    VOLUME = {25},
      YEAR = {2017},
    NUMBER = {3},
     PAGES = {391--402},
      ISSN = {0928-0219,1569-3945},
   MRCLASS = {65N21 (30C35 35J25 35R25 35R30 78A45 92C55)},
  MRNUMBER = {3652265},
       DOI = {10.1515/jiip-2017-0029},
       URL = {https://doi.org/10.1515/jiip-2017-0029},
}

@article {winkler-rieder-2014,
    AUTHOR = {Winkler, Robert and Rieder, Andreas},
     TITLE = {Resolution-controlled conductivity discretization in
              electrical impedance tomography},
   JOURNAL = {SIAM J. Imaging Sci.},
  FJOURNAL = {SIAM Journal on Imaging Sciences},
    VOLUME = {7},
      YEAR = {2014},
    NUMBER = {4},
     PAGES = {2048--2077},
      ISSN = {1936-4954},
   MRCLASS = {65N20 (65F22 92C55)},
  MRNUMBER = {3268610},
MRREVIEWER = {Hans-Peter\ Helfrich},
       DOI = {10.1137/140958955},
       URL = {https://doi.org/10.1137/140958955},
}

@article {garde-nyvonen-2020,
    AUTHOR = {Garde, Henrik and Hyv\"onen, Nuutti},
     TITLE = {Optimal depth-dependent distinguishability bounds for
              electrical impedance tomography in arbitrary dimension},
   JOURNAL = {SIAM J. Appl. Math.},
  FJOURNAL = {SIAM Journal on Applied Mathematics},
    VOLUME = {80},
      YEAR = {2020},
    NUMBER = {1},
     PAGES = {20--43},
      ISSN = {0036-1399,1095-712X},
   MRCLASS = {35R30 (35J05 35P15)},
  MRNUMBER = {4046786},
MRREVIEWER = {Jonathan\ Rohleder},
       DOI = {10.1137/19M1258761},
       URL = {https://doi.org/10.1137/19M1258761},
}

@article {garde2020,
    AUTHOR = {Garde, Henrik},
     TITLE = {Reconstruction of piecewise constant layered conductivities in
              electrical impedance tomography},
   JOURNAL = {Comm. Partial Differential Equations},
  FJOURNAL = {Communications in Partial Differential Equations},
    VOLUME = {45},
      YEAR = {2020},
    NUMBER = {9},
     PAGES = {1118--1133},
      ISSN = {0360-5302,1532-4133},
   MRCLASS = {35R30 (35Q60 35R05)},
  MRNUMBER = {4134387},
MRREVIEWER = {Michiyuki\ Watanabe},
       DOI = {10.1080/03605302.2020.1760884},
       URL = {https://doi.org/10.1080/03605302.2020.1760884},
}

@article {garde-2022,
    AUTHOR = {Garde, Henrik},
     TITLE = {Simplified reconstruction of layered materials in {EIT}},
   JOURNAL = {Appl. Math. Lett.},
  FJOURNAL = {Applied Mathematics Letters. An International Journal of Rapid
              Publication},
    VOLUME = {126},
      YEAR = {2022},
     PAGES = {Paper No. 107815, 5},
      ISSN = {0893-9659,1873-5452},
   MRCLASS = {35Q60 (78A46)},
  MRNUMBER = {4349266},
       DOI = {10.1016/j.aml.2021.107815},
       URL = {https://doi.org/10.1016/j.aml.2021.107815},
}

@article {garde-knudsen-2017,
    AUTHOR = {Garde, Henrik and Knudsen, Kim},
     TITLE = {Distinguishability revisited: depth dependent bounds on
              reconstruction quality in electrical impedance tomography},
   JOURNAL = {SIAM J. Appl. Math.},
  FJOURNAL = {SIAM Journal on Applied Mathematics},
    VOLUME = {77},
      YEAR = {2017},
    NUMBER = {2},
     PAGES = {697--720},
      ISSN = {0036-1399,1095-712X},
   MRCLASS = {35R30 (35J25 35P15 35R05 35R25)},
  MRNUMBER = {3639593},
MRREVIEWER = {Viatcheslav\ I.\ Pri\u imenko},
       DOI = {10.1137/16M1072991},
       URL = {https://doi.org/10.1137/16M1072991},
}

@article{wachterImplementationInteriorpointFilter2006,
  title = {On the Implementation of an Interior-Point Filter Line-Search Algorithm for Large-Scale Nonlinear Programming},
  author = {W{\"a}chter, Andreas and Biegler, Lorenz T.},
  year = {2006},
  month = mar,
  journal = {Mathematical Programming},
  volume = {106},
  number = {1},
  pages = {25--57},
  issn = {1436-4646},
  doi = {10.1007/s10107-004-0559-y},
  urldate = {2025-06-19},
  abstract = {We present a primal-dual interior-point algorithm with a filter line-search method for nonlinear programming. Local and global convergence properties of this method were analyzed in previous work. Here we provide a comprehensive description of the algorithm, including the feasibility restoration phase for the filter method, second-order corrections, and inertia correction of the KKT matrix. Heuristics are also considered that allow faster performance. This method has been implemented in the IPOPT code, which we demonstrate in a detailed numerical study based on 954 problems from the CUTEr test set. An evaluation is made of several line-search options, and a comparison is provided with two state-of-the-art interior-point codes for nonlinear programming.},
  langid = {english},
  keywords = {49M37,65K05,90C30,90C51,Algorithms,Barrier method,Calculus of Variations and Optimization,Continuous Optimization,Discrete Optimization,Filter method,Interior-point method,Line search,Linear Algebra,Nonconvex constrained optimization,Nonlinear programming,Optimization},
  file = {/home/rpetit01/Zotero/storage/CXN5Q5PH/Wächter et Biegler - 2006 - On the implementation of an interior-point filter line-search algorithm for large-scale nonlinear pr.pdf}
}

@article {rondi-2006,
    AUTHOR = {Rondi, Luca},
     TITLE = {A remark on a paper by {G}. {A}lessandrini and {S}.
              {V}essella: ``{L}ipschitz stability for the inverse
              conductivity problem'' [{A}dv. in {A}ppl. {M}ath. {\bf 35}
              (2005), no. 2, 207--241; MR2152888]},
   JOURNAL = {Adv. in Appl. Math.},
  FJOURNAL = {Advances in Applied Mathematics},
    VOLUME = {36},
      YEAR = {2006},
    NUMBER = {1},
     PAGES = {67--69},
      ISSN = {0196-8858,1090-2074},
   MRCLASS = {35R30 (35B35)},
  MRNUMBER = {2198854},
       DOI = {10.1016/j.aam.2004.12.003},
       URL = {https://doi.org/10.1016/j.aam.2004.12.003},
}

@article {dicristo-rondi-2021,
    AUTHOR = {Di Cristo, Michele and Rondi, Luca},
     TITLE = {Interior decay of solutions to elliptic equations with respect
              to frequencies at the boundary},
   JOURNAL = {Indiana Univ. Math. J.},
  FJOURNAL = {Indiana University Mathematics Journal},
    VOLUME = {70},
      YEAR = {2021},
    NUMBER = {4},
     PAGES = {1303--1334},
      ISSN = {0022-2518,1943-5258},
   MRCLASS = {35J30},
  MRNUMBER = {4318476},
       DOI = {10.1512/iumj.2021.70.9367},
       URL = {https://doi.org/10.1512/iumj.2021.70.9367},
}

@article {garofalo-lin-1986,
    AUTHOR = {Garofalo, Nicola and Lin, Fang-Hua},
     TITLE = {Monotonicity properties of variational integrals, {$A_p$}
              weights and unique continuation},
   JOURNAL = {Indiana Univ. Math. J.},
  FJOURNAL = {Indiana University Mathematics Journal},
    VOLUME = {35},
      YEAR = {1986},
    NUMBER = {2},
     PAGES = {245--268},
      ISSN = {0022-2518,1943-5258},
   MRCLASS = {35J20 (35J10 42B25)},
  MRNUMBER = {833393},
MRREVIEWER = {Stavros\ A.\ Belbas},
       DOI = {10.1512/iumj.1986.35.35015},
       URL = {https://doi.org/10.1512/iumj.1986.35.35015},
}

@article{knudsenDBarMethodElectrical2007,
  title = {D-{{Bar Method}} for {{Electrical Impedance Tomography}} with {{Discontinuous Conductivities}}},
  author = {Knudsen, Kim and Lassas, Matti and Mueller, Jennifer L. and Siltanen, Samuli},
  year = {2007},
  month = jan,
  journal = {SIAM Journal on Applied Mathematics},
  volume = {67},
  number = {3},
  pages = {893--913},
  publisher = {{Society for Industrial and Applied Mathematics}},
  issn = {0036-1399},
  doi = {10.1137/060656930},
  urldate = {2025-06-02},
  abstract = {This paper surveys some of the work our group has done in electrical impedance tomography.},
  file = {/home/rpetit01/Zotero/storage/A2IGFNET/Knudsen et al. - 2007 - D‐Bar Method for Electrical Impedance Tomography with Discontinuous Conductivities.pdf}
}

@article{siltanenImplementationReconstructionAlgorithm2000,
  title = {An Implementation of the Reconstruction Algorithm of {{A Nachman}} for the {{2D}} Inverse Conductivity Problem},
  author = {Siltanen, Samuli and Mueller, Jennifer and Isaacson, David},
  year = {2000},
  month = jun,
  journal = {Inverse Problems},
  volume = {16},
  number = {3},
  pages = {681},
  issn = {0266-5611},
  doi = {10.1088/0266-5611/16/3/310},
  urldate = {2025-06-02},
  abstract = {The 2D inverse conductivity problem requires one to determine the unknown electrical conductivity distribution inside a bounded domain {\textohm}{$\subset\mathbb{R}$}2 from knowledge of the Dirichlet-to-Neumann map. The problem has geophysical, industrial, and medical imaging (electrical impedance tomography) applications. In 1996 A Nachman proved that the Dirichlet-to-Neumann map uniquely determines C2 conductivities. The proof, which is constructive, outlines a direct method for reconstructing the conductivity. In this paper we present an implementation of the algorithm in Nachman's proof. The paper includes numerical results obtained by applying the general algorithms described to two radially symmetric cases of small and large contrast.},
  langid = {english},
  file = {/home/rpetit01/Zotero/storage/AXXJI4ZY/Siltanen et al. - 2000 - An implementation of the reconstruction algorithm of A Nachman for the 2D inverse conductivity probl.pdf}
}

@article{barceloBornApproximationThreedimensional2024,
  title = {The {{Born}} Approximation in the Three-Dimensional {{Calder{\'o}n}} Problem II: {{Numerical}} Reconstruction in the Radial Case},
  shorttitle = {The {{Born}} Approximation in the Three-Dimensional {{Calder{\'o}n}} Problem II},
  author = {Barcel{\'o}, Juan A. and Castro, Carlos and Maci{\`a}, Fabricio and Mero{\~n}o, Crist{\'o}bal J.},
  year = {Thu Feb 01 05:00:00 UTC 2024},
  journal = {Inverse Problems and Imaging},
  volume = {18},
  number = {1},
  pages = {183--207},
  publisher = {{Inverse Problems and Imaging}},
  issn = {1930-8337},
  doi = {10.3934/ipi.2023029},
  urldate = {2025-06-02},
  abstract = {In this work we illustrate a number of properties of the Born approximation in the three-dimensional Calder{\'o}n inverse conductivity problem by numerical experiments. The results are based on an explicit representation formula for the Born approximation recently introduced by the authors. We focus on the particular case of radial conductivities in the ball \$ B\_R {\textbackslash}subset {\textbackslash}mathbb\{R\}{\textasciicircum}3  \$ of radius \$ R \$, in which the linearization of the Calder{\'o}n problem is equivalent to a Hausdorff moment problem. We give numerical evidences that the Born approximation is well defined for \$ L{\textasciicircum}\{{\textbackslash}infty\} \$ conductivities, and present a novel numerical algorithm to reconstruct a radial conductivity from the Born approximation under a suitable smallness assumption. We also show that the Born approximation has depth-dependent uniqueness and approximation capabilities depending on the distance (depth) to the boundary \$ {\textbackslash}partial B\_R \$. We then investigate how increasing the radius \$ R \$ affects the quality of the Born approximation, and the existence of a scattering limit as \$ R{\textbackslash}to {\textbackslash}infty \$. Similar properties are also illustrated in the inverse boundary problem for the Schr{\"o}dinger operator \$ -{\textbackslash}Delta +q \$, and strong recovery of singularity results are observed in this case.},
  copyright = {http://creativecommons.org/licenses/by/3.0/},
  langid = {english}
}

@misc{daudeStableFactorizationCalderon2024,
  title = {Stable Factorization of the {{Calder{\'o}n}} Problem via the {{Born}} Approximation},
  author = {Daud{\'e}, Thierry and Maci{\`a}, Fabricio and Mero{\~n}o, Crist{\'o}bal J. and Nicoleau, Fran{\c c}ois},
  year = {2024},
  month = feb,
  number = {arXiv:2402.06321},
  eprint = {2402.06321},
  primaryclass = {math},
  publisher = {arXiv},
  doi = {10.48550/arXiv.2402.06321},
  urldate = {2025-06-02},
  abstract = {In this article we prove the existence of the Born approximation in the context of the radial Calder{\textbackslash}'on problem for Schr{\textbackslash}"odinger operators. This is the inverse problem of recovering a radial potential on the unit ball from the knowledge of the Dirichlet-to-Neumann map (DtN map from now on) of the corresponding Schr{\textbackslash}"odinger operator. The Born approximation naturally appears as the linear component of a factorization of the Calder{\textbackslash}'on problem; we show that the non-linear part, obtaining the potential from the Born approximation, enjoys several interesting properties. First, this map is local, in the sense that knowledge of the Born approximation in a neighborhood of the boundary is equivalent to knowledge of the potential in the same neighborhood, and, second, it is H{\textbackslash}"older stable. This shows in particular that the ill-posedness of the Calder{\textbackslash}'on problem arises solely from the linear step, which consists in computing the Born approximation from the DtN map by solving a Hausdorff moment problem. Moreover, we present an effective algorithm to compute the potential from the Born approximation and show a result on reconstruction of singularities. Finally, we use the Born approximation to obtain a partial characterization of the set of DtN maps for radial potentials. The proofs of these results do not make use of Complex Geometric Optics solutions or its analogues; they are based on results on inverse spectral theory for Schr{\textbackslash}"odinger operators on the half-line, in particular on the concept of \$A\$-amplitude introduced by Barry Simon.},
  archiveprefix = {arXiv},
  keywords = {Mathematics - Analysis of PDEs,Mathematics - Spectral Theory},
  file = {/home/rpetit01/Zotero/storage/L8Z7D4TI/Daudé et al. - 2024 - Stable factorization of the Calderón problem via the Born approximation.pdf;/home/rpetit01/Zotero/storage/7RD4YKGU/2402.html}
}

@article{lazzaro2024oracle,
  title={Oracle-Net for nonlinear compressed sensing in Electrical Impedance Tomography reconstruction problems},
  author={Lazzaro, Damiana and Morigi, Serena and Ratti, Luca},
  journal={Journal of Scientific Computing},
  volume={101},
  number={2},
  pages={49},
  year={2024},
  publisher={Springer}
}

@article{harrach2025monotonicity,
  title={A monotonicity-based globalization of the level-set method for inclusion detection},
  author={Harrach, Bastian and Meftahi, Houcine},
  journal={arXiv preprint arXiv:2501.15887},
  year={2025}
}

@article{brojatsch2024required,
  title={On the required number of electrodes for uniqueness and convex reformulation in an inverse coefficient problem},
  author={Brojatsch, Andrej and Harrach, Bastian},
  journal={arXiv preprint arXiv:2411.00482},
  year={2024}
}

@article{klibanov2017globally,
  title={Globally strictly convex cost functional for a 1-D inverse medium scattering problem with experimental data},
  author={Klibanov, Michael V and Kolesov, Aleksandr E and Nguyen, Lam and Sullivan, Anders},
  journal={SIAM Journal on Applied Mathematics},
  volume={77},
  number={5},
  pages={1733--1755},
  year={2017},
  publisher={SIAM}
}

@article{sylvesterConvergentLayerStripping1992,
  title = {A {{Convergent Layer Stripping Algorithm}} for the {{Radially Symmetric Impedence Tomography Problem}}},
  author = {Sylvester, John},
  year = {1992},
  month = jan,
  journal = {Communications in Partial Differential Equations},
  volume = {17},
  number = {11-12},
  pages = {1955--1994},
  publisher = {Taylor \& Francis},
  issn = {0360-5302},
  doi = {10.1080/03605309208820910},
  urldate = {2025-05-19},
  abstract = {In this paper we prove uniqueness for the 2--dimensional radially symmetric impedence tomography problem. More importantly, we introduce a (possible incomplete) characterization of the radial Dirichlet to Neumann map and use that characterization to produce a convergent algorithm to solve the impedence tomography problem.}
}

@misc{revelsForwardModeAutomaticDifferentiation2016,
  title = {Forward-{{Mode Automatic Differentiation}} in {{Julia}}},
  author = {Revels, Jarrett and Lubin, Miles and Papamarkou, Theodore},
  year = {2016},
  month = jul,
  number = {arXiv:1607.07892},
  eprint = {1607.07892},
  primaryclass = {cs},
  publisher = {arXiv},
  doi = {10.48550/arXiv.1607.07892},
  urldate = {2025-05-19},
  abstract = {We present ForwardDiff, a Julia package for forward-mode automatic differentiation (AD) featuring performance competitive with low-level languages like C++. Unlike recently developed AD tools in other popular high-level languages such as Python and MATLAB, ForwardDiff takes advantage of just-in-time (JIT) compilation to transparently recompile AD-unaware user code, enabling efficient support for higher-order differentiation and differentiation using custom number types (including complex numbers). For gradient and Jacobian calculations, ForwardDiff provides a variant of vector-forward mode that avoids expensive heap allocation and makes better use of memory bandwidth than traditional vector mode. In our numerical experiments, we demonstrate that for nontrivially large dimensions, ForwardDiff's gradient computations can be faster than a reverse-mode implementation from the Python-based autograd package. We also illustrate how ForwardDiff is used effectively within JuMP, a modeling language for optimization. According to our usage statistics, 41 unique repositories on GitHub depend on ForwardDiff, with users from diverse fields such as astronomy, optimization, finite element analysis, and statistics. This document is an extended abstract that has been accepted for presentation at the AD2016 7th International Conference on Algorithmic Differentiation.},
  archiveprefix = {arXiv},
  keywords = {Computer Science - Mathematical Software},
  file = {/home/rpetit01/Zotero/storage/5XVQVRU4/Revels et al. - 2016 - Forward-Mode Automatic Differentiation in Julia.pdf;/home/rpetit01/Zotero/storage/GLFSHP6F/1607.html}
}

@article{kaltenbacher-2024,
    author = {Kaltenbacher, Barbara},
    title = {Convergence rates under a range invariance condition with application to electrical impedance tomography},
    journal = {IMA Journal of Numerical Analysis},
    pages = {drae063},
    year = {2024},
    month = {09},
    abstract = {This paper is devoted to proving convergence rates of variational and iterative regularization methods under variational source conditions variational source conditions (VSCs) for inverse problems whose linearization satisfies a range invariance condition. In order to achieve this, often an appropriate relaxation of the problem needs to be found that is usually based on an augmentation of the set of unknowns and leads to a particularly structured reformulation of the inverse problem. We analyze three approaches that make use of this structure, namely a variational and a Newton-type scheme, whose convergence without rates has already been established in Kaltenbacher, B. (2023, Convergence guarantees for coefficient reconstruction in PDEs from boundary measurements by variational and Newton-type methods via range invariance. IMA J. Numer. Anal., 44, 1269–1312); additionally, we propose a split minimization approach that can be show to satisfy the same rates results. The range invariance condition has been verified for several coefficient identification problems for partial differential equations from boundary observations as relevant in a variety of tomographic imaging modalities. Our motivation particularly comes from the, by now, classical inverse problem of electrical impedance tomography (EIT) and we study both the original formulation by a diffusion type equation and its reformulation as a Schrödinger equation. For both of them we find relaxations that can be proven to satisfy the range invariance condition. Combining results on VSCs from Weidling, F. (2019, Variational Source Conditions and Conditional Stability Estimates for Inverse Problems in PDEs. Gttingen: Universitätsverlag Göttingen). with the abstract framework for the three approaches mentioned above, we arrive at convergence rates results for the variational, split minimization and Newton-type method in EIT.},
    issn = {0272-4979},
    doi = {10.1093/imanum/drae063},
    url = {https://doi.org/10.1093/imanum/drae063},
    eprint = {https://academic.oup.com/imajna/advance-article-pdf/doi/10.1093/imanum/drae063/59381393/drae063.pdf},
}

@book {kaltenbacher-neubauer-scherzer-2008,
    AUTHOR = {Kaltenbacher, Barbara and Neubauer, Andreas and Scherzer,
              Otmar},
     TITLE = {Iterative regularization methods for nonlinear ill-posed
              problems},
    SERIES = {Radon Series on Computational and Applied Mathematics},
    VOLUME = {6},
 PUBLISHER = {Walter de Gruyter GmbH \& Co. KG, Berlin},
      YEAR = {2008},
     PAGES = {viii+194},
      ISBN = {978-3-11-020420-9},
   MRCLASS = {65-02 (47J25 65J22 65R32)},
  MRNUMBER = {2459012},
MRREVIEWER = {Thorsten\ Hohage},
       DOI = {10.1515/9783110208276},
       URL = {https://doi.org/10.1515/9783110208276},
}

@article {Lechleiter-Rieder-2008,
    AUTHOR = {Lechleiter, Armin and Rieder, Andreas},
     TITLE = {Newton regularizations for impedance tomography: convergence
              by local injectivity},
   JOURNAL = {Inverse Problems},
  FJOURNAL = {Inverse Problems. An International Journal on the Theory and
              Practice of Inverse Problems, Inverse Methods and Computerized
              Inversion of Data},
    VOLUME = {24},
      YEAR = {2008},
    NUMBER = {6},
     PAGES = {065009, 18},
      ISSN = {0266-5611,1361-6420},
   MRCLASS = {78A70 (35J25 65N21)},
  MRNUMBER = {2456956},
       DOI = {10.1088/0266-5611/24/6/065009},
       URL = {https://doi.org/10.1088/0266-5611/24/6/065009},
}

@article{bacchelliLipschitzStabilityStationary2006,
  title = {Lipschitz Stability for a Stationary {{2D}} Inverse Problem with Unknown Polygonal Boundary},
  author = {Bacchelli, Valeria and Vessella, Sergio},
  year = {2006},
  month = aug,
  journal = {Inverse Problems},
  volume = {22},
  number = {5},
  pages = {1627},
  issn = {0266-5611},
  doi = {10.1088/0266-5611/22/5/007},
  urldate = {2025-05-19},
  abstract = {We consider the stability issue for the inverse problem of determining an unknown portion {$\Sigma$} of a two-dimensional simply connected domain from overdetermined boundary data for the Laplace equation. In this paper, we study the case in which {$\Sigma$} is a polygonal line. We prove a Lipschitz stability estimate under further a priori geometric assumptions on {$\Sigma$}.},
  langid = {english},
  file = {/home/rpetit01/Zotero/storage/ILB5KKN2/Bacchelli et Vessella - 2006 - Lipschitz stability for a stationary 2D inverse problem with unknown polygonal boundary.pdf}
}

@article{berettaLipschitzStabilityInverse2013,
  title = {Lipschitz {{Stability}} of an {{Inverse Boundary Value Problem}} for a {{Schr{\"o}dinger-Type Equation}}},
  author = {Beretta, Elena and {de Hoop}, Maarten V. and Qiu, Lingyun},
  year = {2013},
  month = jan,
  journal = {SIAM Journal on Mathematical Analysis},
  volume = {45},
  number = {2},
  pages = {679--699},
  publisher = {{Society for Industrial and Applied Mathematics}},
  issn = {0036-1410},
  doi = {10.1137/120869201},
  urldate = {2025-05-19},
  abstract = {We study the inverse boundary value problem for the Helmholtz equation using the Dirichlet-to-Neumann map at selected frequencies as the data. A conditional Lipschitz stability estimate for the inverse problem holds in the case of wavespeeds that are a linear combination of piecewise constant functions (following a domain partition) and gives a framework in which the scheme converges. The stability constant grows exponentially as the number of subdomains in the domain partition increases. We establish an order optimal upper bound for the stability constant. We eventually realize computational experiments to demonstrate the stability constant evolution for three-dimensional wavespeed reconstruction.},
  file = {/home/rpetit01/Zotero/storage/WGPXXNZT/Beretta et al. - 2013 - Lipschitz Stability of an Inverse Boundary Value Problem for a Schrödinger-Type Equation.pdf}
}

@article{berettaGlobalLipschitzStability2022,
  title = {Global {{Lipschitz}} Stability Estimates for Polygonal Conductivity Inclusions from Boundary Measurements},
  author = {Beretta, Elena and Francini, Elisa},
  year = {2022},
  month = jul,
  journal = {Applicable Analysis},
  volume = {101},
  number = {10},
  pages = {3536--3549},
  publisher = {Taylor \& Francis},
  issn = {0003-6811},
  doi = {10.1080/00036811.2020.1775819},
  urldate = {2025-05-19},
  abstract = {We derive Lipschitz stability estimates for the Hausdorff distance of polygonal conductivity inclusions in terms of the Dirichlet-to-Neumann map.},
  keywords = {35J25,35R30,conductivity equation,inverse problems,J. N. Wang,Polygonal inclusions,stability},
  file = {/home/rpetit01/Zotero/storage/NRZNKDC7/Beretta et and Francini - 2022 - Global Lipschitz stability estimates for polygonal conductivity inclusions from boundary measurement.pdf}
}

@article{alessandriniLipschitzStabilityInverse2005,
  title = {Lipschitz Stability for the Inverse Conductivity Problem},
  author = {Alessandrini, Giovanni and Vessella, Sergio},
  year = {2005},
  month = aug,
  journal = {Advances in Applied Mathematics},
  volume = {35},
  number = {2},
  pages = {207--241},
  issn = {0196-8858},
  doi = {10.1016/j.aam.2004.12.002},
  urldate = {2025-05-19},
  abstract = {We discuss the stability issue for Calder{\'o}n's inverse conductivity problem, also known as Electrical Impedance Tomography. It is well known that this problem is severely ill-posed. In this paper we prove that if it is a-priori known that the conductivity is piecewise constant with a bounded number of unknown values, then a Lipschitz stability estimate holds.}
}

@article{mandacheExponentialInstabilityInverse2001,
  title = {Exponential Instability in an Inverse Problem for the {{Schr{\"o}dinger}} Equation},
  author = {Mandache, Niculae},
  year = {2001},
  month = aug,
  journal = {Inverse Problems},
  volume = {17},
  number = {5},
  pages = {1435},
  issn = {0266-5611},
  doi = {10.1088/0266-5611/17/5/313},
  urldate = {2025-05-19},
  abstract = {We consider the problem of the determination of the potential from the Dirichlet to Neumann map of the Schr{\"o}dinger operator. We show that this problem is severely ill-posed. The results extend to electrical impedance tomography. They show that the logarithmic stability results of Alessandrini are optimal (up to the value of the exponent).},
  langid = {english},
  file = {/home/rpetit01/Zotero/storage/RN6P875N/Mandache - 2001 - Exponential instability in an inverse problem for the Schrödinger equation.pdf}
}

@article{alessandriniStableDeterminationConductivity1988,
  title = {Stable Determination of Conductivity by Boundary Measurements},
  author = {Alessandrini, Giovanni},
  year = {1988},
  month = jan,
  journal = {Applicable Analysis},
  volume = {27},
  number = {1-3},
  pages = {153--172},
  publisher = {Taylor \& Francis},
  issn = {0003-6811},
  doi = {10.1080/00036818808839730},
  urldate = {2025-05-19},
  abstract = {We consider the problem of determining the scalar coefficient {$\gamma$} in the elliptic equation div({$\gamma$} grad u) = 0 in {$\omega$} when, for every Dirichlet datum u = ∅ on {$\partial\omega$} , the Neumann datum {$\gamma$}({$\partial$}/ {$\partial$} n)u = {$\wedge$}.{$\gamma$}∅ is known. We prove a continuous dependence result},
  keywords = {35R25,35R30}
}

@article{alessandriniStrongUniqueContinuation2012,
  title = {Strong Unique Continuation for General Elliptic Equations in {{2D}}},
  author = {Alessandrini, Giovanni},
  year = {2012},
  month = feb,
  journal = {Journal of Mathematical Analysis and Applications},
  volume = {386},
  number = {2},
  pages = {669--676},
  issn = {0022-247X},
  doi = {10.1016/j.jmaa.2011.08.029},
  urldate = {2025-05-14},
  abstract = {We prove that solutions to elliptic equations in two variables in divergence form, possibly non-self-adjoint and with lower order terms, satisfy the strong unique continuation property.},
  keywords = {Elliptic equations,Unique continuation property},
  file = {/home/rpetit01/Zotero/storage/UXEDKBXK/Alessandrini - 2012 - Strong unique continuation for general elliptic equations in 2D.pdf;/home/rpetit01/Zotero/storage/WQ4UQKCD/S0022247X11007657.html}
}

@article{neubauerLandweberIterationNonlinear2000,
  title = {On {{Landweber}} Iteration for Nonlinear Ill-Posed Problems in {{Hilbert}} Scales},
  author = {Neubauer, Andreas},
  year = {2000},
  month = apr,
  journal = {Numerische Mathematik},
  volume = {85},
  number = {2},
  pages = {309--328},
  issn = {0945-3245},
  doi = {10.1007/s002110050487},
  urldate = {2025-04-29},
  abstract = {In this paper we derive convergence rates results for Landweber iteration in Hilbert scales in terms of the iteration index \$k\$for exact data and in terms of the noise level \${\textbackslash}delta\$for perturbed data. These results improve the one obtained recently for Landweber iteration for nonlinear ill-posed problems in Hilbert spaces. For numerical computations we have to approximate the nonlinear operator and the infinite-dimensional spaces by finite-dimensional ones. We also give a convergence analysis for this finite-dimensional approximation. The conditions needed to obtain the rates are illustrated for a nonlinear Hammerstein integral equation. Numerical results are presented confirming the theoretical ones.},
  langid = {english},
  keywords = {Mathematics Subject Classification (1991):65J15 65J20 47H17},
  file = {/home/rpetit01/Zotero/storage/9BUFP6ME/Neubauer - 2000 - On Landweber iteration for nonlinear ill-posed problems in Hilbert scales.pdf}
}

@article{hankeConvergenceAnalysisLandweber1995,
  title = {A Convergence Analysis of the {{Landweber}} Iteration for Nonlinear Ill-Posed Problems},
  author = {Hanke, Martin and Neubauer, Andreas and Scherzer, Otmar},
  year = {1995},
  month = nov,
  journal = {Numerische Mathematik},
  volume = {72},
  number = {1},
  pages = {21--37},
  issn = {0945-3245},
  doi = {10.1007/s002110050158},
  urldate = {2025-04-29},
  abstract = {In this paper we prove that the Landweber iteration is a stablemethod for solving nonlinear ill-posed problems. For perturbed data withnoise level \${\textbackslash}delta \$we propose a stopping rule that yields theconvergence rate\$O ({\textbackslash}delta {\textasciicircum}\{1/2\}\$)under appropriate conditions. Weillustrate these conditions for a few examples.},
  langid = {english},
  keywords = {Mathematics Subject Classification (1991): 65J15 65J20 47H17},
  file = {/home/rpetit01/Zotero/storage/9CYS3TXL/Hanke et al. - 1995 - A convergence analysis of the Landweber iteration fornonlinear ill-posed problems.pdf}
}

@article {kindermann-2022,
    AUTHOR = {Kindermann, Stefan},
     TITLE = {On the tangential cone condition for electrical impedance
              tomography},
   JOURNAL = {Electron. Trans. Numer. Anal.},
  FJOURNAL = {Electronic Transactions on Numerical Analysis},
    VOLUME = {57},
      YEAR = {2022},
     PAGES = {17--34},
      ISSN = {1068-9613},
   MRCLASS = {65N21 (47J05 65F22)},
  MRNUMBER = {4418424},
MRREVIEWER = {Thomas\ Schuster},
       DOI = {10.1553/etna\_vol57s17},
       URL = {https://doi.org/10.1553/etna_vol57s17},
}

@article{albertiCalderonsInverseProblem2019,
  title = {Calder{\'o}n's Inverse Problem with a Finite Number of Measurements},
  author = {Alberti, Giovanni S. and Santacesaria, Matteo},
  year = {2019},
  month = jan,
  journal = {Forum of Mathematics, Sigma},
  volume = {7},
  pages = {e35},
  issn = {2050-5094},
  doi = {10.1017/fms.2019.31},
  urldate = {2024-11-18},
  abstract = {We prove that an L{$\infty$}L{$\infty$}L{\textasciicircum}\{{\textbackslash}infty \} potential in the Schr{\"o}dinger equation in three and higher dimensions can be uniquely determined from a finite number of boundary measurements, provided it belongs to a known finite dimensional subspace WW\{{\textbackslash}mathcal\{W\}\}. As a corollary, we obtain a similar result for Calder{\'o}n's inverse conductivity problem. Lipschitz stability estimates and a globally convergent nonlinear reconstruction algorithm for both inverse problems are also presented. These are the first results on global uniqueness, stability and reconstruction for nonlinear inverse boundary value problems with finitely many measurements. We also discuss a few relevant examples of finite dimensional subspaces WW\{{\textbackslash}mathcal\{W\}\}, including bandlimited and piecewise constant potentials, and explicitly compute the number of required measurements as a function of dimWdim⁡W{\textbackslash}dim \{{\textbackslash}mathcal\{W\}\}.},
  langid = {english},
  keywords = {35R30 (primary),42C40,94A20 (secondary)},
  file = {/home/rpetit01/Zotero/storage/DUVDBAKI/Alberti et Santacesaria - 2019 - CALDERÓN’S INVERSE PROBLEM WITH A FINITE NUMBER OF.pdf}
}

@article{sylvesterGlobalUniquenessTheorem1987,
  title = {A {{Global Uniqueness Theorem}} for an {{Inverse Boundary Value Problem}}},
  author = {Sylvester, John and Uhlmann, Gunther},
  year = {1987},
  journal = {Annals of Mathematics},
  volume = {125},
  number = {1},
  eprint = {1971291},
  eprinttype = {jstor},
  pages = {153--169},
  publisher = {[Annals of Mathematics, Trustees of Princeton University on Behalf of the Annals of Mathematics, Mathematics Department, Princeton University]},
  issn = {0003-486X},
  doi = {10.2307/1971291},
  urldate = {2025-05-11},
  abstract = {In this paper, we show that the single smooth coefficient of the elliptic operator L\textsubscript{{$\gamma$}} = ∇ {$\cdot$} {$\gamma$}∇ can be determined from knowledge of its Dirichlet integrals for arbitrary boundary values on a fixed region \${\textbackslash}Omega {\textbackslash}subseteq R{\textasciicircum}n, n {\textbackslash}geq 3.\$ From a physical point of view, we show that an isotropic conductivity can be determined by steady state measurements at the boundary.}
}

@article{nachmanGlobalUniquenessTwoDimensional1996,
  title = {Global {{Uniqueness}} for a {{Two-Dimensional Inverse Boundary Value Problem}}},
  author = {Nachman, Adrian I.},
  year = {1996},
  journal = {Annals of Mathematics},
  volume = {143},
  number = {1},
  eprint = {2118653},
  eprinttype = {jstor},
  pages = {71--96},
  publisher = {[Annals of Mathematics, Trustees of Princeton University on Behalf of the Annals of Mathematics, Mathematics Department, Princeton University]},
  issn = {0003-486X},
  doi = {10.2307/2118653},
  urldate = {2025-05-11},
  abstract = {We show that the coefficient {$\gamma$}(x) of the elliptic equation ∇ {$\cdot$} ({$\gamma$} ∇ u) = 0 in a two-dimensional domain is uniquely determined by the corresponding Dirichlet-to-Neumann map on the boundary, and give a reconstruction procedure. For the equation {$\Sigma$} {$\partial$}\textsubscript{i} ({$\gamma$}\textsuperscript{ij} {$\partial$}\textsubscript{j} u) = 0, two matrix-valued functions {$\gamma$}\textsubscript{1} and {$\gamma$}\textsubscript{2} yield the same Dirichlet-to-Neumann map if and only if there is a diffeomorphism of the domain which fixes the boundary and transforms {$\gamma$}\textsubscript{1} into {$\gamma$}\textsubscript{2}.}
}

@article{caroGlobalUniquenessCalderon2016,
  title = {Global {{Uniqueness}} for the {{Calder{\'o}n Problem}} with {{Lipschitz Conductivities}}},
  author = {Caro, Pedro and Rogers, Keith M.},
  year = {2016},
  month = jan,
  journal = {Forum of Mathematics, Pi},
  volume = {4},
  pages = {e2},
  issn = {2050-5086},
  doi = {10.1017/fmp.2015.9},
  urldate = {2025-05-11},
  abstract = {We prove uniqueness for the Calder{\'o}n problem with Lipschitz conductivities in higher dimensions. Combined with the recent work of Haberman, who treated the three- and four-dimensional cases, this confirms a conjecture of Uhlmann. Our proof builds on the work of Sylvester and Uhlmann, Brown, and Haberman and Tataru who proved uniqueness for   C1C1C{\textasciicircum}\{1\} -conductivities and Lipschitz conductivities sufficiently close to the identity.},
  langid = {english},
  keywords = {35R30},
  file = {/home/rpetit01/Zotero/storage/DWYPKITE/Caro et Rogers - 2016 - GLOBAL UNIQUENESS FOR THE CALDERÓN PROBLEM WITH LIPSCHITZ CONDUCTIVITIES.pdf}
}

@article{bukhgeimRecoveringPotentialCauchy2008,
  title = {Recovering a Potential from {{Cauchy}} Data in the Two-Dimensional Case},
  author = {Bukhgeim, A. L.},
  year = {2008},
  month = jan,
  journal = {Journal of Inverse and Ill-posed Problems},
  volume = {16},
  number = {1},
  pages = {19--33},
  publisher = {De Gruyter},
  issn = {1569-3945},
  doi = {10.1515/jiip.2008.002},
  urldate = {2025-05-11},
  abstract = {In this paper we prove that the Cauchy data for the Schr{\"o}dinger equation in the two-dimensional case determines a potential from L p (for p \&gt; 2) uniquely. We also obtain a linear inversion formula for smooth potentials.},
  chapter = {Journal of Inverse and Ill-posed Problems},
  copyright = {De Gruyter expressly reserves the right to use all content for commercial text and data mining within the meaning of Section 44b of the German Copyright Act.},
  langid = {english}
}

@article{astala2006calderon,
  title={Calder{\'o}n's inverse conductivity problem in the plane},
  author={Astala, Kari and P{\"a}iv{\"a}rinta, Lassi},
  journal={Annals of Mathematics},
  pages={265--299},
  year={2006},
  publisher={JSTOR}
}

@article{klibanovConvexificationElectricalImpedance2019,
  title = {Convexification of Electrical Impedance Tomography with Restricted {{Dirichlet-to-Neumann}} Map Data},
  author = {Klibanov, Michael V and Li, Jingzhi and Zhang, Wenlong},
  year = {2019},
  month = feb,
  journal = {Inverse Problems},
  volume = {35},
  number = {3},
  pages = {035005},
  publisher = {IOP Publishing},
  issn = {0266-5611},
  doi = {10.1088/1361-6420/aafecd},
  urldate = {2025-01-22},
  abstract = {We propose a new numerical method to reconstruct the isotropic electrical conductivity from measured restricted Dirichlet-to-Neumann map data in electrical impedance tomography (EIT). `Restricted Dirichlet-to-Neumann (DtN) map data' means that the Dirichlet and Neumann boundary data for EIT are generated by a point source running either along an interval of a straight line or along a curve located outside of the domain of interest. We `convexify' the problem via constructing a globally strictly convex Tikhonov-like functional using a Carleman weight function. In particular, two new Carleman estimates are established. Global convergence to the correct solution of the gradient projection method for this functional is proven. Numerical examples demonstrate a good performance of this numerical procedure.},
  langid = {english},
  file = {/home/romain/Zotero/storage/GTELVE25/Klibanov et al. - 2019 - Convexification of electrical impedance tomography with restricted Dirichlet-to-Neumann map data.pdf}
}

@article{calderon_inverse_1980,
	title = {On an inverse boundary value problem},
	volume = {25},
	issn = {1807-0302},
	doi = {10.1590/S0101-82052006000200002},
	abstract = {This paper is a reprint of the original work by A. P. Calderón published by the Brazilian Mathematical Society (SBM) in ATAS of SBM (Rio de Janeiro), pp. 65-73, 1980. The original paper had no abstract, so this reprint to be truthful to the original work is published with no abstract.},
	journal = {Seminar on Numerical Analysis and its Applications to Continuum Physics},
	author = {Calderón, Alberto P},
	year = {1980},
	keywords = {boundary value problems, cation problem, elliptic equa-, identi fi, inverse problems},
	pages = {65--73},
}

@misc{Clarabel_2024,
      title={Clarabel: An interior-point solver for conic programs with quadratic objectives}, 
      author={Paul J. Goulart and Yuwen Chen},
      year={2024},
      eprint={2405.12762},
      archivePrefix={arXiv},
      primaryClass={math.OC}
}

@article{albertiConvexLiftingApproach2025,
  title = {A Convex Lifting Approach for the {{Calder{\'o}n}} Problem},
  author = {Alberti, Giovanni S. and Petit, Romain and Sanna, Simone},
  year = {2025},
  journal={arXiv preprint arXiv:2507.00645}
}

@article{ocpb:16,
    author       = {Brendan O'Donoghue and Eric Chu and Neal Parikh and Stephen Boyd},
    title        = {Conic Optimization via Operator Splitting and Homogeneous Self-Dual Embedding},
    journal      = {Journal of Optimization Theory and Applications},
    month        = {June},
    year         = {2016},
    volume       = {169},
    number       = {3},
    pages        = {1042-1068},
    url          = {http://stanford.edu/~boyd/papers/scs.html},
}

@article{huangfuParallelizingDualRevised2018,
  title = {Parallelizing the Dual Revised Simplex Method},
  author = {Huangfu, Q. and Hall, J. A. J.},
  year = {2018},
  month = mar,
  journal = {Mathematical Programming Computation},
  volume = {10},
  number = {1},
  pages = {119--142},
  issn = {1867-2957},
  doi = {10.1007/s12532-017-0130-5},
  urldate = {2025-05-09},
  abstract = {This paper introduces the design and implementation of two parallel dual simplex solvers for general large scale sparse linear programming problems. One approach, called PAMI, extends a relatively unknown pivoting strategy called suboptimization and exploits parallelism across multiple iterations. The other, called SIP, exploits purely single iteration parallelism by overlapping computational components when possible. Computational results show that the performance of PAMI is superior to that of the leading open-source simplex solver, and that SIP complements PAMI in achieving speedup when PAMI results in slowdown. One of the authors has implemented the techniques underlying PAMI within the FICO Xpress simplex solver and this paper presents computational results demonstrating their value. In developing the first parallel revised simplex solver of general utility, this work represents a significant achievement in computational optimization.},
  langid = {english},
  keywords = {65K05,90C05,90C06,Linear programming,Parallel computing,Revised simplex method},
  file = {/home/rpetit01/Zotero/storage/QVQ7G24P/Huangfu et Hall - 2018 - Parallelizing the dual revised simplex method.pdf}
}

@misc{dalleDifferentiationInterface2025,
      author={Dalle, Guillaume and Hill, Adrian},
      title={Differentiation{I}nterface.jl},
      year={2024},
      publisher={Zenodo},
      doi={10.5281/zenodo.11092033},
      url={https://doi.org/10.5281/zenodo.11092033},
}

@misc{hill2025sparserbetterfasterstronger,
      title={{Sparser, Better, Faster, Stronger: Efficient Automatic Differentiation for Sparse Jacobians and Hessians}}, 
      author={Adrian Hill and Guillaume Dalle},
      year={2025},
      eprint={2501.17737},
      archivePrefix={arXiv},
      primaryClass={cs.LG},
      url={https://arxiv.org/abs/2501.17737}, 
}

@misc{schäfer2022abstractdifferentiationjlbackendagnosticdifferentiableprogramming,
      title={AbstractDifferentiation.jl: Backend-Agnostic Differentiable Programming in Julia}, 
      author={Frank Schäfer and Mohamed Tarek and Lyndon White and Chris Rackauckas},
      year={2022},
      eprint={2109.12449},
      archivePrefix={arXiv},
      primaryClass={cs.MS},
      url={https://arxiv.org/abs/2109.12449}, 
}

@article{bezansonJuliaFreshApproach2017,
  title = {Julia: {{A Fresh Approach}} to {{Numerical Computing}}},
  shorttitle = {Julia},
  author = {Bezanson, Jeff and Edelman, Alan and Karpinski, Stefan and Shah, Viral B.},
  year = {2017},
  month = jan,
  journal = {SIAM Review},
  volume = {59},
  number = {1},
  pages = {65--98},
  publisher = {{Society for Industrial and Applied Mathematics}},
  issn = {0036-1445},
  doi = {10.1137/141000671},
  urldate = {2025-05-09},
  abstract = {This is the third in a series of papers on aspects of modern computing environments that are relevant to statistical data analysis. In this paper, we discuss programming environments. In particular, we argue that integrated programming environments (for example, Lisp and Smalltalk environments) are more appropriate as a base for data analysis than conventional operating systems (for example, Unix).},
  file = {/home/rpetit01/Zotero/storage/WCPYTXCF/Bezanson et al. - 2017 - Julia A Fresh Approach to Numerical Computing.pdf}
}

@article{Lubin2023,
    author = {Miles Lubin and Oscar Dowson and Joaquim {Dias Garcia} and Joey Huchette and Beno{\^i}t Legat and Juan Pablo Vielma},
    title = {{JuMP} 1.0: {R}ecent improvements to a modeling language for mathematical optimization},
    journal = {Mathematical Programming Computation},
    volume = {15},
    pages = {581–589},
    year = {2023},
    doi = {10.1007/s12532-023-00239-3}
}

@book{boydConvexOptimization2004,
  title = {Convex {{Optimization}}},
  author = {Boyd, Stephen and Vandenberghe, Lieven},
  year = {2004},
  month = mar,
  edition = {1st edition},
  publisher = {Cambridge University Press},
  address = {Cambridge New York Melbourne New Delhi Singapore},
  abstract = {Convex optimization problems arise frequently in many different fields. A comprehensive introduction to the subject, this book shows in detail how such problems can be solved numerically with great efficiency. The focus is on recognizing convex optimization problems and then finding the most appropriate technique for solving them. The text contains many worked examples and homework exercises and will appeal to students, researchers and practitioners in fields such as engineering, computer science, mathematics, statistics, finance, and economics.},
  isbn = {978-0-521-83378-3},
  langid = {english}
}

@article{harrachCalderonProblemFinitely2023,
  title = {The {{Calder{\'o}n Problem}} with {{Finitely Many Unknowns}} Is {{Equivalent}} to {{Convex Semidefinite Optimization}}},
  author = {Harrach, Bastian},
  year = {2023},
  month = oct,
  journal = {SIAM Journal on Mathematical Analysis},
  pages = {5666--5684},
  publisher = {{Society for Industrial and Applied Mathematics}},
  issn = {0036-1410},
  doi = {10.1137/23M1544854},
  urldate = {2023-10-19},
  abstract = {.This paper focuses on an inverse problem associated with the plate equation which is derived from models in fluid mechanics and elasticity. We establish the unique identifying results in simultaneously determining both the unknown density and the internal sources from the passive boundary measurement. The proof mainly relies on the asymptotic analysis and harmonic analysis on integral transforms.},
  file = {/home/romain/Zotero/storage/2NUY2TRZ/Harrach - 2023 - The Calderón Problem with Finitely Many Unknowns i.pdf}
}
\bibliographystyle{apalike}
\end{document}